\tikzset{arrow/.style={-stealth}}
\tikzset{arrowshorter/.style={-stealth, shorten <=2pt, shorten >=2pt}}
\tikzset{arrowmuchshorter/.style={-stealth, shorten <=7pt, shorten >=6pt}}
\tikzset{mono/.style={>-stealth}} 
\tikzset{epi/.style={-twotriang}} 
\tikzset{twoarrowlonger/.style={double,double distance=1.5pt,
shorten <=5pt,shorten >=6pt,
decoration={markings,mark=at position -4pt with {\arrow[scale=1.75]{>}}},
preaction={decorate}}} 
\tikzset{twoarrow/.style={double,double distance=1.5pt,
shorten <=6pt,shorten >=7pt, 
decoration={markings,mark=at position -4pt
with {\arrow[scale=1.75]{>}}},
preaction={decorate} 
}
}
\tikzset{%
    symbol/.style={%
        draw=none,
        every to/.append style={%
            edge node={node [sloped, allow upside down, auto=false]{$#1$}}}
    }
}
\tikzset{mapstikz/.style={-stealth, 
decoration={markings,mark=at position 0pt with {\arrow[scale=0.5]{|}}}, preaction={decorate}}}
\theoremstyle{plain}   
\newtheorem{thm}{Theorem}[section] 
\let\c@thm\c@thm\makeatother
\newtheorem{cor}{Corollary}[section]
\let\c@cor\c@thm\makeatother
\newtheorem{lem}{Lemma}[section]
\let\c@lem\c@thm\makeatother
\newtheorem{prop}{Proposition}[section]
\let\c@prop\c@thm\makeatother
\let\c@claim\c@thm\makeatother
\let\c@conjecture\c@thm\makeatother
\newtheorem{thmalph}{Theorem}
\newtheorem{coralph}{Corollary}
\theoremstyle{definition}
\newtheorem{defn}{Definition}[section]
\let\c@defn\c@thm\makeatother
\let\c@const\c@thm\makeatother
\newtheorem{notn}{Notation}[section]
\let\c@notn\c@thm\makeatother
\let\c@convention\c@thm\makeatother
\theoremstyle{remark}
\newtheorem{rmk}{Remark}[section]
\let\c@rmk\c@thm\makeatother
\newtheorem{ex}{Example}[section]
\let\c@ex\c@thm\makeatother
\let\c@observation\c@thm\makeatother
\let\c@warning\c@thm\makeatother
\let\c@digression\c@thm\makeatother
\let\c@answ\c@thm\makeatother
\let\c@answ\c@thm\makeatother
\let\c@aside\c@thm\makeatother
\let\c@equation\c@thm
\numberwithin{equation}{section}
\newcommand{\newrefformat}[2]{}
\crefname{lem}{Lemma}{Lemmas}
\crefname{thm}{Theorem}{Theorems}
\crefname{defn}{Definition}{Definitions}
\crefname{notn}{Notation}{Notations}
\crefname{const}{Construction}{Constructions}
\crefname{prop}{Proposition}{Propositions}
\crefname{rmk}{Remark}{Remarks}
\crefname{cor}{Corollary}{Corollaries}
\crefname{equation}{Display}{Displays}
\crefname{ex}{Example}{Examples}
\crefname{thmalph}{Theorem}{Theorems}
\crefname{answ}{Answer}{Answers}
\crefname{crzcond}{Crazy Condition}{Crazy Conditions}
\newcommand{\cA}{\mathcal{A}}
\newcommand{\cC}{\mathcal{C}}
\newcommand{\cD}{\mathcal{D}}
\newcommand{\cM}{\mathcal{M}}
\newcommand{\cO}{\mathcal{O}}
\newcommand{\cP}{\mathcal{P}}
\newcommand{\cQ}{\mathcal{Q}}
\newcommand{\cS}{\mathcal{S}}
\newcommand{\cat}{\cC\!\mathit{at}}
\newcommand{\set}{\cS\!\mathit{et}}
\newcommand{\sset}{\mathit{s}\set}
\DeclareMathOperator{\Map}{Map}
\newcommand{\inttrunc}[1]{\tau_{\leq n}^{\text{i}}}
 \newcommand{\NRS}{N^{\operatorname{RS}}}
        \newcommand{\Nnat}{N^{\natural}}
   \newcommand{\matn}[2]{\operatorname{Mat}_{#1}{#2}}
      \newcommand{\mat}[1]{\operatorname{Mat}{#1}}
\DeclareMathOperator{\colim}{colim}
\DeclareMathOperator{\id}{id}
\newcommand{\aamalg}[1]{\underset{#1}{{\amalg}}} 
\DeclareMathOperator{\op}{op}
\newcommand{\eqDelta}{\Delta[3]_{eq}}
\newcommand\simpfver[9]{%
    \def\tempa{#1}%
    \def\tempb{#2}%
    \def\tempc{#3}%
    \def\tempd{#4}%
    \def\tempe{#5}%
    \def\tempf{#6}%
    \def\tempg{#7}%
    \def\temph{#8}%
    \def\tempi{#9}%
}
\newcommand\simpfvercontinued[5]{%
    \begin{tikzcd}[column sep=1.3cm, row sep=0.4cm, baseline=(current  bounding  box.center), ampersand replacement=\&]
 #5 \& #4 \arrow[l, "\tempc" swap] \&[-7mm] \&[-7mm] #5 \& #4 \arrow[l, "\tempc" swap]\\
 {} \& \&[-7mm] {=} \&[-7mm] \& \\
 #2 \arrow[r, "\tempa" swap]\arrow[uu, "\tempd", ""{name=a031,inner sep=2pt, swap} ] \arrow[uur, "\tempe"{near start, xshift=0.15cm}, ""{name=a02,inner sep=2pt, swap}]\& #3 \arrow[uu, "\tempb" swap]\&[-7mm] \&[-7mm] #2 \arrow[uu, "\tempd", ""{name=a032,inner sep=2pt, swap}]\arrow[r, "\tempa" swap]\& #3\arrow[uu, "\tempb" swap] \arrow[uul, "\tempf"{near end, xshift=0.25cm}, ""{name=a13,inner sep=2pt, swap}]
 \arrow[Rightarrow, from=a031, to=1-2, shorten >= 0.3cm, "\tempg" ]
 \arrow[Rightarrow, from=a02, to=3-2, shorten >= 0.3cm, "\temph" swap ]
 \arrow[Rightarrow, from=a032, to=3-5, shorten >= 0.3cm, "\tempi" swap ]
 \arrow[Rightarrow, from=a13, to=1-5, shorten >= 0.3cm, "#1" ]
 \end{tikzcd}
}
\tikzset{%
scalearrow/.style n args={3}{
  decoration={
    markings,
    mark=at position (1-#1)/2*\pgfdecoratedpathlength
      with {\coordinate (#2);},
    mark=at position (1+#1)/2*\pgfdecoratedpathlength
      with {\coordinate (#3);},
    },
  postaction=decorate,
  }
}
\tikzset{mono/.style={>-stealth}} 
\tikzset{epi/.style={-twotriang}} 
\author{Viktoriya Ozornova}
\address{Fakult\"at f\"ur Mathematik, Ruhr-Universit\"at Bochum, Bochum, Germany}
\email{viktoriya.ozornova@rub.de}
\author{Martina Rovelli}
\address{Mathematical Sciences Institute,
The Australian National University,
Canberra, Australia
}
\email{martina.rovelli@anu.edu.au}
\keywords{$n$-categories, $(\infty,n)$-categories, complicial sets, suspension $2$-category, pushout of $n$-categories}
\subjclass[2020]{18N65; 55U35; 18N10; 18N50; 55U10}
\begin{document}

\title{Fundamental pushouts of $n$-complicial sets}

\maketitle

\begin{abstract}
The paper focuses on investigating how certain relations between strict $n$-categories are preserved in a particular implementation of $(\infty,n)$-categories, given by saturated $n$-complicial sets. In this model, we show that the $(\infty,n)$-categorical nerve of $n$-categories is homotopically compatible with $1$-categorical suspension and wedge. As an application, we show that certain pushouts encoding composition in $n$-categories are homotopy pushouts of saturated $n$-complicial sets.
    \end{abstract}

  \section*{Introduction}
  Since 1950s, category theory has established itself as a language to phrase mathematical phenomena in a uniform way. 
  Recent developments in the study of the cobordism hypothesis, in derived algebraic geometry and in brave new algebra, highlighted the presence and role played by higher morphisms, as well as the fact that axioms defining a categorical structure should be weakened, replacing equalities with higher isomorphisms. This perspective sparked new interest in the study of generalizations of the notion of an ordinary category, in the form of an $n$-category and then of an $(\infty,n)$-category.
  
    While it is still unfolding its significance in algebraic topology, higher category theory arose in 1960s with the original purpose of encoding non-abelian cohomology into the language of \emph{$n$-categories}.
     The notion of an $n$-category encapsulates the idea that beyond objects and morphisms between them, there are also morphisms between morphisms, called $2$-morphisms, morphisms between those, called $3$-morphisms, and so on up to level $n$. All these morphisms compose associatively along morphisms of lower dimensions.
  
  Composition of morphisms was traditionally requested to satisfy strict equational conditions, such as strict associativity, and this led to a rich theory of strict enriched category theory.
However, many examples of interest that naturally present a higher categorical structure, such as several categories of cobordisms, derived categories, or the categorical structure given by points, paths and higher homotopies in a topological space, fail to satisfy these axioms.
  
  Seemingly very different in nature, the notion of an $n$-category had to then be weakened in order to accommodate homotopical phenomena, becoming itself a homotopical notion, and in the 1990's the notion of an \emph{$(\infty,n)$-category} started making its way.
  An $(\infty,n)$-category should consist of objects, regarded as $0$-morphisms, and $k$-morphisms between $(k-1)$-morphisms for any $k$; these morphisms must moreover compose weakly associatively along morphisms of a lower dimension and  are all weakly invertible for $k>n$.
  While the theory of strict $n$-categories is unambiguous, the defining guidelines for the notion of $(\infty,n)$-category have been given a precise meaning in different models. All models are conjecturally equivalent, although some comparisons showing equivalences of the corresponding homotopy theories are still missing.
  
  Regardless of the model, the collection of $(\infty,n)$-categories should assemble at least into an $(\infty,1)$-category $(\infty,n)\cat$, enlarging the $(\infty,1)$-category of strict $n$-categories $n\cat$, and the inclusion of $(\infty,1)$-categories
   $N\colon n\cat\hookrightarrow(\infty,n)\cat$ has been realized in many models, often implemented by a type of nerve construction. It is interesting to understand how this embedding behaves with natural constructions of a categorical flavour, given that nerve constructions typically behave poorly with respect to constructions involving left adjoint functors and colimits.

The goals of this article is to show that in a specific model of $(\infty,n)$-categories, Verity's \emph{saturated $n$-complicial sets} \cite{VerityComplicialAMS,EmilyNotes,or},  two types of constructions, \emph{suspension} and \emph{wedge}, are compatible with the embedding.
As a motivating application, we show that the nerve embedding preserves certain fundamental relations between $n$-categories, that encode composition and invertibility of morphisms.

In other models, such as Barwick's \emph{$n$-fold complete Segal spaces} \cite{BarwickThesis, LurieGoodwillie}, Rezk's \emph{$\Theta_n$-spaces} \cite{rezkTheta} and Ara's \emph{$n$-quasicategories} \cite{Ara}, the analogous statements are essentially part of the axioms. However, given the lack of model comparisons with saturated $n$-complicial sets for $n\ge3$ and the unexplored compatibility of existing model comparisons with the nerve embedding for $n=2$, the result could not be imported at no cost.

  In \cref{sec:SuspensionResults} we introduce the suspension of $1$- and $(\infty,1)$-categories, which can be seen as a left adjoint to taking the hom $1$- or $(\infty,1)$-category between two objects of a $2$- or $(\infty,2)$-category.
  Roughly speaking, the \emph{suspension} of a $1$- or $(\infty,1)$-category $\cP$ is a $2$- or $(\infty,2)$-category with two objects and a unique interesting hom-category given by $\cP$.
  Then, we show in \cref{sec:SuspensionProofs} as \cref{ThmB} the following compatibility of nerve and suspension.

  \begin{thmalph}
  \label{ThmBIntro}
  In the model of saturated $2$-complicial sets, for any $1$-category $\cP$ there is an equivalence of $(\infty,2)$-categories
\[N(\Sigma\cP)\simeq \Sigma (N\cP)\]
between the nerve of the suspension and the suspension of the nerve.
  \end{thmalph}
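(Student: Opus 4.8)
The plan is to produce a natural comparison map $\Phi\colon\Sigma(N\cP)\to N(\Sigma\cP)$ and to prove that it is a weak equivalence of saturated $2$-complicial sets. Throughout I use the explicit models recalled in \cref{sec:SuspensionResults}: on the strict side, $\Sigma\cP$ is the $2$-category on two objects $0,1$ with $\Hom_{\Sigma\cP}(0,0)$ and $\Hom_{\Sigma\cP}(1,1)$ terminal, $\Hom_{\Sigma\cP}(1,0)$ empty, and $\Hom_{\Sigma\cP}(0,1)=\cP$; on the complicial side, $\Sigma$ is the left adjoint to the formation of the hom-$(\infty,1)$-category between two objects, so that $\Sigma(N\cP)$ has objects $0,1$, with hom-$(\infty,1)$-categories $\Hom(0,0)$ and $\Hom(1,1)$ terminal, $\Hom(1,0)$ empty, and $\Hom(0,1)\simeq N\cP$.

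Because $\Sigma$ is a left adjoint on the complicial side, giving $\Phi$ amounts to choosing two objects of $N(\Sigma\cP)$---namely $0$ and $1$---together with a map of complicial sets from $N\cP$ to the hom-$(\infty,1)$-category of $N(\Sigma\cP)$ between them, for which I use the identification $\cP=\Hom_{\Sigma\cP}(0,1)$ and the canonical comparison relating the nerve of a strict hom-category with the hom-$(\infty,1)$-category of the nerve. Unwinding the explicit description of the complicial suspension, $\Phi$ turns out to be a bijection on simplices: on either side an $n$-simplex is ``constant'' at $0$ or at $1$, or else ``mixed'' and records a simplex of $N\cP$, the point being that $\Sigma\cP$ has no morphism $1\to 0$---so an oriental-shaped diagram in $\Sigma\cP$ passes from $0$ to $1$ exactly once---and that the $2$-cells of the orientals necessarily land in $\Hom_{\Sigma\cP}(0,1)=\cP$.

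What remains is to check that $\Phi$ respects the stratifications, i.e.\ that a simplex of $\Sigma(N\cP)$ is thin if and only if the corresponding simplex of $N(\Sigma\cP)$ is, so that $\Phi$ is an isomorphism, or at least a weak equivalence, of stratified simplicial sets. Granting this, the equivalence of $(\infty,2)$-categories follows formally: $\Phi$ is essentially surjective since it hits $0$ and $1$; the hom-$(\infty,1)$-categories $\Hom(0,0)$ and $\Hom(1,1)$ are terminal and $\Hom(1,0)$ is empty on both sides---on the nerve side because $\Sigma\cP$ has no nonidentity endomorphism of $0$ or $1$ and no morphism $1\to 0$---and on the only remaining hom-$(\infty,1)$-category $\Phi$ realizes the equivalence through the identifications $\underline\Hom_{\Sigma(N\cP)}(0,1)\simeq N\cP\simeq\underline\Hom_{N(\Sigma\cP)}(0,1)$; hence, after passing if necessary to fibrant saturated replacements (which leaves the objects and, up to equivalence, the hom-$(\infty,1)$-categories untouched), $\Phi$ is a weak equivalence by the recognition principle for equivalences of saturated $2$-complicial sets (a map between fibrant objects is a weak equivalence once it is essentially surjective on objects and an equivalence on all hom-$(\infty,1)$-categories).

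The crux is thus the stratification matching, equivalently the compatibility of the nerve with hom-$(\infty,1)$-categories in this case: one has to identify, marking and all, the hom-$(\infty,1)$-category produced by the complicial machinery out of $N(\Sigma\cP)$ with the ordinary nerve of the strict hom-category $\cP$, whose marked simplices are the invertible morphisms together with all simplices of higher dimension. I expect the verification that the ``mixed'' thin simplices coincide on the two sides---which hinges on the fact that the only $2$-cells of $\Sigma\cP$ are the morphisms of $\cP$ and that whiskering with the identities of $0$ and $1$ is trivial---to be the technically demanding part; should this compatibility not be citable in exactly the form needed, the fallback is to exhibit $\Phi$, or the map it induces on hom-$(\infty,1)$-categories, as a composite of elementary complicial anodyne extensions and hence a trivial cofibration. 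Everything else---naturality of $\Phi$, the terminal and empty hom-$(\infty,1)$-categories, essential surjectivity, and compatibility with fibrant replacement---is routine.
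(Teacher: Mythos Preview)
Your central claim---that the comparison map $\Phi\colon\Sigma(N\cP)\to N(\Sigma\cP)$ is a bijection on simplices---is false, and this breaks the primary argument. An $(m+1)$-simplex of $N(\Sigma\cP)$ sending the first $k+1$ vertices to $x_\bot$ and the last $m+1-k$ to $x_\top$ is not determined by a single $k$-simplex of $N\cP$: using the identification $N(\Sigma\cP)\cong\mat\cP$ of \cref{simplicesasmatrices}, such a simplex is a functor $[k]\times[m-k]^{\op}\to\cP$, i.e.\ a whole \emph{grid} in $\cP$, whereas the corresponding component of $(\Sigma N\cP)_{m+1}$ contributes only a functor $[k]\to\cP$. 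The inclusion $\Phi$ hits exactly the grids that are constant in the $[m-k]^{\op}$-direction (see \cref{comparisonnervesuspension}), so as soon as $\cP$ has a nonidentity morphism and $m-k\ge1$ there are simplices of $N(\Sigma\cP)$ not in the image. Concretely, for $\cP=[1]$ the $2$-simplex of $N(\Sigma[1])$ recording the nontrivial $2$-cell of $\Sigma[1]=[1|1]$ is a functor $[0]\times[1]^{\op}\to[1]$ that is not constant, hence not in $\Sigma(N[1])$.

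Your fallback via the ``recognition principle'' is circular in this setting. Neither $\Sigma(N\cP)$ nor $N(\Sigma\cP)$ is fibrant, so you would need to identify the hom-$(\infty,1)$-categories of their fibrant replacements; but knowing that $\underline\Hom_{N(\Sigma\cP)}(0,1)\simeq N\cP$ is essentially equivalent to the theorem you are trying to prove (it is the adjoint reformulation of $\Sigma(N\cP)\simeq N(\Sigma\cP)$), and no such identification is available a priori in the complicial model. The paper's proof is precisely your ``last resort'': it exhibits $\Phi$ as a complicial inner anodyne extension by an explicit filtration of $N(\Sigma\cP)$, adding the missing grid-shaped simplices via inner horn fillers indexed by dimension, type, and a combinatorial ``suspect index'' (\cref{ThmB2}). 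That filtration argument is the actual content of the theorem, not a routine afterthought.
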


 In \cref{sec:WedgeResults} we introduce the \emph{wedge} of two $n$- or $(\infty,n)$-categories, a particular way of gluing along an object, and we show in \cref{sec:WedgeProofs} the following compatibility of nerve and wedge, which will appear as \cref{ThmA}.

 \begin{thmalph}
 \label{ThmAIntro}
In the model of saturated $n$-complicial sets, for any $n$-categories $\cA$ and $\cA'$ there is an equivalence of $(\infty,n)$-categories
\[N(\cA\vee\cA')\simeq N\cA\vee N\cA'\]
between the nerve of their wedge and the wedge of their nerves.
 \end{thmalph}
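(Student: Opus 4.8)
The plan is to realize both sides as models of the same homotopy pushout and compare. By definition the wedge $\cA\vee\cA'$ is the pushout in $n\cat$ of $\cA\longleftarrow[0]\longrightarrow\cA'$ along the two chosen basepoint objects, and $N\cA\vee N\cA'$ is the pushout of $N\cA\longleftarrow\Delta[0]\longrightarrow N\cA'$ along the two basepoint $0$‑simplices. Applying $N$ to the defining cocone of $\cA\vee\cA'$ produces a cocone under $N\cA\longleftarrow\Delta[0]\longrightarrow N\cA'$, hence a canonical comparison map
\[
\phi\colon N\cA\vee N\cA'\;=\;N\cA\amalg_{\Delta[0]}N\cA'\;\longrightarrow\;N(\cA\vee\cA').
\]
First I would record that $N\cA\vee N\cA'$ already computes the homotopy pushout: the inclusion $\Delta[0]\hookrightarrow N\cA$ of the basepoint is a subcomplex inclusion of stratified simplicial sets, hence a cofibration in the saturated $n$‑complicial model structure, and that model structure is left proper (every object is cofibrant), so a pushout along this cofibration is a homotopy pushout. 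Thus the theorem reduces entirely to showing that $\phi$ is a weak equivalence.

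Next I would check that $\phi$ is a cofibration, i.e.\ a (regular) monomorphism. The basepoint‑collapsing retractions $\cA\vee\cA'\to\cA$ and $\cA\vee\cA'\to\cA'$ exhibit $\cA$ and $\cA'$ as sub‑$n$‑categories of $\cA\vee\cA'$, and, using that every cell of the amalgam $\cA\amalg_{[0]}\cA'$ has a normal form as a pasting composite of ``pure'' cells of $\cA$ and of $\cA'$ meeting only along the basepoint, one sees that the intersection of $\cA$ and $\cA'$ inside $\cA\vee\cA'$ is exactly $[0]$. Consequently $N\cA$ and $N\cA'$ are subcomplexes of $N(\cA\vee\cA')$ with intersection $\Delta[0]$, so $\phi$ identifies $N\cA\vee N\cA'$ with the subcomplex $N\cA\cup N\cA'\subseteq N(\cA\vee\cA')$ and is in particular a monomorphism.

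The heart of the argument is then to show that this subcomplex inclusion $N\cA\cup N\cA'\hookrightarrow N(\cA\vee\cA')$ is a weak equivalence, which I would do by exhibiting the target as obtained from $N\cA\cup N\cA'$ by a countable sequence of pushouts of complicial anodyne extensions. A nondegenerate $m$‑simplex of $N(\cA\vee\cA')$ is a strict functor out of the $m$‑th oriental $\mathcal O_m$, and one assigns to it a complexity measuring how much it uses the gluing — for instance the total number of $\cA/\cA'$‑alternations occurring in the normal forms of the images of the $1$‑cells of $\mathcal O_m$ — refined by dimension into a well‑order on simplices. The simplices of complexity $0$ are precisely those factoring through $\cA$ or through $\cA'$, i.e.\ the subcomplex $N\cA\cup N\cA'$. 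A simplex $F$ of positive complexity is displayed as a distinguished inner face of an $(m{+}1)$‑simplex $\widetilde F$ obtained by inserting an extra vertex at a place where a syllable boundary occurs — i.e.\ inserting the basepoint — chosen so that every other face of $\widetilde F$ has strictly smaller complexity, hence is already present, and so that the new top cell is thin in $N(\cA\vee\cA')$ (for the Roberts–Street marking a simplex is thin as soon as its top cell is sent to an identity, and after saturation all cells produced this way are thin). Filling these inner horns in order, and marking the new top cells, realizes $N(\cA\vee\cA')$ as an iterated pushout of elementary complicial anodyne extensions over $N\cA\cup N\cA'$; together with the reduction above this gives $N(\cA\vee\cA')\simeq N\cA\vee N\cA'$.

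The main obstacle is this last step: making the filtration precise. For $n=1$ (where $N$ is the ordinary nerve) it is essentially the normal‑form combinatorics of amalgamated free products underlying van Kampen's theorem — equivalently, the description of $B(M\ast M')$ as a homotopy pushout $BM\amalg_{B1}BM'$ via Bass–Serre theory — but for $n\ge 2$ one must replace ``alternating words'' by the full structure of an orientals map into an amalgam, keeping track of which cells of $\mathcal O_m$ land in $\cA$, which in $\cA'$, and which are basepoint ``switch'' cells, and one must verify that the horns used at every stage are genuinely among the elementary (inner, and thinness) anodyne extensions with matching induced markings, and that the chosen well‑order does make each not‑yet‑present simplex the distinguished face of a horn all of whose remaining faces are already present. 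I also expect one uses that $N$ preserves filtered colimits (orientals being finite) to reduce to $\cA,\cA'$ with finitely many cells and to organise the transfinite composition. This combinatorial bookkeeping — not any homotopical subtlety — is where the real work lies, and the argument appears independent of the suspension theorem.
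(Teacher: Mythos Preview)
Your high-level strategy matches the paper's: show the comparison $\phi$ is a regular monomorphism and then realize it as a transfinite composite of pushouts of complicial inner horn inclusions via a filtration on the simplices of $N(\cA\vee\cA')$. You also correctly identify that the combinatorics of the filtration is the entire content, and that the argument is independent of the suspension theorem.

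The gap is that your proposed filtration is calibrated to the wrong problem. The wedge in this paper is \emph{not} along arbitrary basepoints: by definition $a_{\top}$ is a \emph{cosieve object} of $\cA$ (nothing non-identity leaves it) and $a'_{\bot}$ is a \emph{sieve object} of $\cA'$ (nothing non-identity enters it). Hence there is no Bass--Serre ``alternating word'' combinatorics whatsoever --- every cell of $\cA\vee\cA'$ is an $\cA$-cell followed by an $\cA'$-cell with at most one switch, so your ``number of $\cA/\cA'$-alternations'' collapses and cannot furnish the fine filtration you need. What the paper does instead is use the sieve/cosieve hypothesis to prove a structural lemma you are missing: $\cA\vee\cA'$ embeds as a \emph{pullback} inside $\cA\times\cA'$ over $[1]\times[1]$. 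This describes every $m$-simplex of $N(\cA\vee\cA')$ concretely as a pair $(\rho,\rho')\in N_m\cA\times N_m\cA'$ subject to a single order condition on ``types'' $k_\rho\le k_{\rho'}$ (the last vertex landing in $\chi^{-1}(0)$, resp.\ $\chi'^{-1}(0)$). The filtration is then four layers deep --- dimension, type difference $k_{\rho'}-k_\rho$, then $k_{\rho'}$, then a ``suspect index'' $r$, the latter three descending --- and the $(d{+}1)$-simplex filled over a non-suspect $d$-simplex $(\sigma,\sigma')$ is the explicit pair $(s_r\sigma,\,s_{r-1}\sigma')$. Your ``insert the basepoint at a syllable boundary'' is morally this move, but the product description is what lets one verify that the $r$-horn lies in the previous stage and that every face containing $\{r-1,r,r+1\}$ is degenerate in \emph{both} coordinates, hence marked in the Roberts--Street marking; no appeal to saturation is needed and the map is in fact complicial inner anodyne.
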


As anticipated, we now elaborate on how \cref{ThmAIntro,ThmBIntro} can be used to then show that the nerve embedding preserves certain valuable pushouts.

   For $0\le m\le n$, an $m$-morphism of an $n$-category $\cD$ is represented by a functor $C_m\to\cD$, where $C_m$ is the free $m$-cell, so one can regard all free cells as the building blocks of $n$-categories. For instance, the free $0$-, $1$- and $2$-cells can be depicted as
  \begin{equation*}
  \begin{tikzpicture}[baseline={([yshift=-0.1cm]current bounding box.center)}, scale=1.0]
\def\l{0.7cm}
\def\s{2pt}
\def\r{2pt}
\begin{scope}[xshift=-7*\l]
\draw[fill] (-\l,0) circle (\r) node[inner sep=\s] (a){};
\draw (a){}+(-\l,0) node (C0){$C_0=$};
\end{scope}
\begin{scope}[xshift=-3.5*\l]
\draw[fill] (-\l,0) circle (\r) node[inner sep=\s] (a){};
\draw[fill] (\l,0) circle (\r) node[inner sep=\s] (b){};
\draw[-stealth] (a)--node[inner sep=\s](At){}(b);
\draw (a){}+(-\l,0) node (C1){$C_1=$};
\end{scope}
\begin{scope}[xshift=2*\l]
\draw[fill] (-\l,0) circle (\r) node[inner sep=\s] (a){};
\draw[fill] (\l,0) circle (\r) node[inner sep=\s] (b){};
\draw[-stealth] (a) to[bend left=70] node[inner sep=\s](As){}(b);
\draw[-stealth] (a) to[bend right=70]node[inner sep=\s](At){}(b);
\node (A) at ($(As)!0.5!(At)$) {{\Large $\Downarrow$}};
\draw (a){}+(-\l,0) node (C2){$C_2=$};
\end{scope}
\end{tikzpicture}
.
  \end{equation*}
Composition operations are governed by pasting diagrams, which can be realized as certain pushouts of $n$-categories, which are instances of Barwick--Schommer-Pries' ``fundamental pushouts'' from \cite{BarwickSchommerPries}.
For instance, composition of $2$-morphisms along objects and along $1$-morphisms are encoded in the pushouts in $n\cat$
  \begin{equation} 
  \label{eq:FundPushout}
\begin{tikzpicture}[baseline={([yshift=-0.1cm]current bounding box.center)}, scale=1.0]
\def\l{0.7cm}
\def\s{2pt}
\def\r{2pt}
\begin{scope}[xshift=-1.5*\l, yshift=3*\l]
\draw[fill] (-\l,0) circle (\r) node[inner sep=\s] (a){};
\end{scope}
\begin{scope}[xshift=-3.5*\l]
\draw[fill] (-\l,0) circle (\r) node[inner sep=\s] (a){};
\draw[fill] (\l,0) circle (\r) node[inner sep=\s] (b){};
\draw[-stealth] (a) to[bend left=70] node[inner sep=\s](As){}(b);
\draw[-stealth] (a) to[bend right=70]node[inner sep=\s](At){}(b);
\node (A) at ($(As)!0.5!(At)$) {{\Large $\Downarrow$}};
\end{scope}
\begin{scope}[yshift=3*\l, xshift=2*\l]
\draw[fill] (-\l,0) circle (\r) node[inner sep=\s] (a){};
\draw[fill] (\l,0) circle (\r) node[inner sep=\s] (b){};
\draw[-stealth] (a) to[bend left=70] node[inner sep=\s](As){}(b);
\draw[-stealth] (a) to[bend right=70]node[inner sep=\s](At){}(b);
\node (A) at ($(As)!0.5!(At)$) {{\Large $\Downarrow$}};
\end{scope}
\begin{scope}
\draw[fill] (-\l,0) circle (\r) node[inner sep=\s] (a){};
\draw[fill] (\l,0) circle (\r) node[inner sep=\s] (b){};
\draw[fill] (3*\l,0) circle (\r) node[inner sep=\s] (c){};
\draw[-stealth] (a) to[bend left=70] node[inner sep=\s](As){}(b);
\draw[-stealth] (a) to[bend right=70]node[inner sep=\s](At){}(b);
\draw[-stealth] (b) to[bend left=70] node[inner sep=\s](Bs){}(c);
\draw[-stealth] (b) to[bend right=70]node[inner sep=\s](Bt){}(c);
\node (A) at ($(As)!0.5!(At)$) {{\Large $\Downarrow$}};
\node (B) at ($(Bs)!0.5!(Bt)$) {{\Large $\Downarrow$}};
\end{scope}
\draw[right hook-stealth,thick] (-1.3*\l, 3*\l)--(0.1*\l, 3*\l);
\draw[right hook-stealth,thick] (-2.1*\l, 0)--(-1.3*\l, 0);
\draw[right hook-stealth,thick] (-2.5*\l, 2.2*\l)--(-2.5*\l, 0.9*\l);
\draw[right hook-stealth,thick] (1.7*\l, 2.2*\l)--(1.7*\l, 0.9*\l);
\end{tikzpicture}
\mbox{\quad and \quad}
\begin{tikzpicture}[baseline={([yshift=0.1cm]current bounding box.center)}, scale=1.0]
\def\l{0.7cm}
\def\s{2pt}
\def\r{2pt}
\def\h{0cm}
\begin{scope}[xshift=-3.5*\l, yshift=3*\l]
\draw[fill] (-\l,0) circle (\r) node[inner sep=\s] (a){};
\draw[fill] (\l,0) circle (\r) node[inner sep=\s] (b){};
\draw[-stealth] (a)--node[inner sep=\s](At){}(b);
\end{scope}
\begin{scope}[xshift=-3.5*\l]
\draw[fill] (-\l,-\h) circle (\r) node[inner sep=\s] (a){};
\draw[fill] (\l,-\h) circle (\r) node[inner sep=\s] (b){};
\draw[-stealth] (a) to[bend left=70] node[inner sep=\s](A){}(b);
\draw[-stealth] (a)--node[inner sep=\s](B){}(b);
\node (AB) at ($(A)!0.5!(B)$) {{\scriptsize $\Downarrow$}};
\end{scope}
\begin{scope}[yshift=3*\l]
\draw[fill] (-\l,-\h) circle (\r) node[inner sep=\s] (a){};
\draw[fill] (\l,-\h) circle (\r) node[inner sep=\s] (b){};
\draw[-stealth] (a) to[bend right=70]node[inner sep=\s](C){}(b);
\draw[-stealth] (a)--node[inner sep=\s](B){}(b);
\node (BC) at ($(B)!0.5!(C)$) {{\scriptsize $\Downarrow$}};
\end{scope}
\begin{scope}
\draw[fill] (-\l,0) circle (\r) node[inner sep=\s] (a){};
\draw[fill] (\l,0) circle (\r) node[inner sep=\s] (b){};
\draw[-stealth] (a) to[bend left=70] node[inner sep=\s](A){}(b);
\draw[-stealth] (a) to[bend right=70]node[inner sep=\s](C){}(b);
\draw[-stealth] (a)--node[inner sep=\s](B){}(b);
\node (AB) at ($(A)!0.5!(B)$) {{\scriptsize $\Downarrow$}};
\node (BC) at ($(B)!0.5!(C)$) {{\scriptsize $\Downarrow$}};
\end{scope}
\draw[right hook-stealth,thick] (-2.1*\l, 3*\l)--(-1.3*\l, 3*\l);
\draw[right hook-stealth,thick] (-2.1*\l, 0)--(-1.3*\l, 0);
\draw[right hook-stealth,thick] (-3.5*\l, 2.2*\l)--(-3.5*\l, 0.9*\l);
\draw[right hook-stealth,thick] (0.0*\l, 2.2*\l)--(0.0*\l, 0.9*\l);
\end{tikzpicture}
\tag{$*$}
  \end{equation}

In the new setup $(\infty,n)\cat$, one can make sense of cells and shapes obtained as the fundamental pushouts from (\ref{eq:FundPushout}) as $(\infty,n)$-categories.
 Cells should still detect morphisms, and the pushouts should still encode composition of morphisms. However, for this to be meaningful, the fundamental pushouts regarded as $(\infty,n)$-categories must be also the resulting pushout in $(\infty,n)$-categories.
It is therefore expected, and included in the axioms for a model of $(\infty,n)$-categories in the sense of \cite{BarwickSchommerPries}, that the fundamental pushouts are preserved by the embedding.

  Using \cref{ThmAIntro,ThmBIntro} we can show that the fundamental pushouts from (\ref{eq:FundPushout}) are preserved in the model of $n$-complicial sets,
  providing in particular a first step towards proving the equivalence of saturated $n$-complicial sets with other models.

 More precisely, as an instance of \cref{ThmAIntro}, we obtain the following corollary, asserting the preservation of the first fundamental pushout from (\ref{eq:FundPushout}), which will appear as \cref{CorA}.
  
  \begin{coralph}
  \label{CorAIntro}
In the model of saturated $n$-complicial sets, there is an equivalence of $(\infty,2)$-categories
\[
N\left(
\begin{tikzpicture}[baseline={([yshift=-0.1cm]current bounding box.center)}, scale=1.0]
\def\l{0.7cm}
\def\s{2pt}
\def\r{2pt}
\draw[fill] (-\l,0) circle (\r) node[inner sep=\s] (a){};
\draw[fill] (\l,0) circle (\r) node[inner sep=\s] (b){};
\draw[fill] (3*\l,0) circle (\r) node[inner sep=\s] (c){};
\draw[-stealth] (a) to[bend left=70] node[inner sep=\s](As){}(b);
\draw[-stealth] (a) to[bend right=70]node[inner sep=\s](At){}(b);
\draw[-stealth] (b) to[bend left=70] node[inner sep=\s](Bs){}(c);
\draw[-stealth] (b) to[bend right=70]node[inner sep=\s](Bt){}(c);
\node (A) at ($(As)!0.5!(At)$) {{\scriptsize {\Large $\Downarrow$}}};
\node (B) at ($(Bs)!0.5!(Bt)$) {{\scriptsize {\Large $\Downarrow$}}};
\end{tikzpicture}
\right) \simeq N\left(
\begin{tikzpicture}[baseline={([yshift=-0.1cm]current bounding box.center)}, scale=1.0]
\def\l{0.7cm}
\def\s{2pt}
\def\h{0cm}
\def\r{2pt}
\draw[fill] (-\l,-\h) circle (\r) node[inner sep=\s] (a){};
\draw[fill] (\l,-\h) circle (\r) node[inner sep=\s] (b){};
\draw[-stealth] (a) to[bend left=70] node[inner sep=\s](A){}(b);
\draw[-stealth] (a) to[bend right=70] node[inner sep=\s](B){}(b);
\node (AB) at ($(A)!0.5!(B)$) {{\scriptsize {\Large $\Downarrow$}}};
\end{tikzpicture}
\right)
%
%
\aamalg{N\left(
\begin{tikzpicture}[baseline={([yshift=-0.05cm]current bounding box.center)}, scale=0.5]
\def\l{0.7cm}
\def\s{2pt}
\def\r{4pt}
\draw[fill] (-\l,0) circle (\r) node[inner sep=\s] (a){};
\end{tikzpicture}
\right)}
N\left(
\begin{tikzpicture}[baseline={([yshift=-0.1cm]current bounding box.center)}, scale=1.0]
\def\l{0.7cm}
\def\s{2pt}
\def\h{0cm}
\def\r{2pt}
\draw[fill] (-\l,-\h) circle (\r) node[inner sep=\s] (a){};
\draw[fill] (\l,-\h) circle (\r) node[inner sep=\s] (b){};
\draw[-stealth] (a) to[bend right=70]node[inner sep=\s](C){}(b);
\draw[-stealth] (a) to[bend left=70]node[inner sep=\s](B){}(b);
\node (BC) at ($(B)!0.5!(C)$) {{\scriptsize {\Large $\Downarrow$}}};
\end{tikzpicture}
\right).
\]

  \end{coralph}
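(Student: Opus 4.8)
The plan is to deduce \cref{CorAIntro} directly from \cref{ThmAIntro}, by recognizing the first fundamental pushout of \eqref{eq:FundPushout} as a wedge of free $2$-cells and then identifying the wedge of the nerves with the homotopy pushout displayed on the right-hand side.

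First I would record the relevant identifications in $n\cat$. The free $2$-cell $C_2$ comes equipped with a distinguished \emph{source} object and \emph{target} object, i.e.\ with two pointings $C_0 \hookrightarrow C_2$ (indeed $C_2 \cong \Sigma C_1$ is the suspension of the free $1$-cell, and these are the images of the two objects of $C_1$). Pointing one copy of $C_2$ at its target and a second copy at its source, one checks that the chain of two composable bigons occurring on the left of \cref{CorAIntro} is precisely the wedge
\[
C_2 \vee C_2 \;=\; C_2 \aamalg{C_0} C_2 ,
\]
the two structure maps of this pushout being exactly the two legs $C_0 \hookrightarrow C_2$ appearing in the first fundamental pushout of \eqref{eq:FundPushout}; the single bigon there is $C_2$ and the gluing object is $C_0$. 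Applying \cref{ThmAIntro} with $\cA = \cA' = C_2$ then produces an equivalence of $(\infty,2)$-categories $N(C_2 \vee C_2) \simeq NC_2 \vee NC_2$.

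It remains to identify $NC_2 \vee NC_2$ with the homotopy pushout in the statement. By the construction of the wedge of $n$-complicial sets recalled in \cref{sec:WedgeResults}, $NC_2 \vee NC_2$ is the ordinary pushout $NC_2 \amalg_{NC_0} NC_2$ of stratified simplicial sets along the images of the source and target pointings. Since $NC_0 \hookrightarrow NC_2$ is a monomorphism of stratified simplicial sets, hence a cofibration in the complicial model structure, this ordinary pushout already computes the homotopy pushout; and by naturality of the equivalence of \cref{ThmAIntro} in its two variables, the structure maps $NC_0 \to NC_2 \to NC_2 \vee NC_2$ agree with the images under $N$ of the structure maps of $C_2 \vee C_2$. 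Concatenating, we obtain
\[
N\bigl( C_2 \aamalg{C_0} C_2 \bigr) \;\simeq\; NC_2 \vee NC_2 \;\simeq\; NC_2 \aamalg{NC_0} NC_2 ,
\]
which is the asserted equivalence and exhibits $N$ as preserving the first fundamental pushout. The genuinely hard input is \cref{ThmAIntro} itself, which is assumed here; within the present deduction the only points demanding care are the bookkeeping of the source and target pointings of $C_2$ (so that $C_2 \vee C_2$ reproduces the chain of two \emph{composable} bigons rather than some other gluing), the check that $NC_0 \hookrightarrow NC_2$ is a cofibration so that the right-hand pushout is homotopy invariant, and the compatibility of the equivalence of \cref{ThmAIntro} with these gluing maps.
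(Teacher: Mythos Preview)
Your proposal is correct and follows essentially the same approach as the paper: both deduce the corollary directly from \cref{ThmAIntro} by recognizing the chain of composable bigons as a wedge $C_2\vee C_2\cong[1|1]\vee[1|1]$ and then observing that the resulting pushout of nerves is a homotopy pushout because $NC_0\hookrightarrow NC_2$ is a cofibration. The only point the paper makes slightly more explicit is the verification that the target object of $C_2$ is a \emph{cosieve} object and the source object is a \emph{sieve} object (cf.~\cref{wedgencat}), which is the hypothesis needed for \cref{ThmAIntro} to apply; you allude to this via the identification $C_2\cong\Sigma C_1$, and it would be worth stating it in those terms.
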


Secondly, using \cref{ThmBIntro}, we prove the following corollary, which asserts the preservation of the second fundamental pushout from (\ref{eq:FundPushout}), and will appear as \cref{CorB}.

  \begin{coralph}
  \label{CorBIntro}
  In the model of saturated $2$-complicial sets, there is an equivalence of $(\infty,2)$-categories
\[
N\left(
\begin{tikzpicture}[baseline={([yshift=-0.1cm]current bounding box.center)}, scale=1.0]
\def\l{0.7cm}
\def\s{2pt}
\def\r{2pt}
\draw[fill] (-\l,0) circle (\r) node[inner sep=\s] (a){};
\draw[fill] (\l,0) circle (\r) node[inner sep=\s] (b){};
\draw[-stealth] (a) to[bend left=70] node[inner sep=\s](A){}(b);
\draw[-stealth] (a) to[bend right=70]node[inner sep=\s](C){}(b);
\draw[-stealth] (a)--node[inner sep=\s](B){}(b);
\node (AB) at ($(A)!0.5!(B)$) {{\scriptsize $\Downarrow$}};
\node (BC) at ($(B)!0.5!(C)$) {{\scriptsize $\Downarrow$}};
\end{tikzpicture}
\right) \simeq N\left(
\begin{tikzpicture}[baseline={([yshift=-0.4cm]current bounding box.center)}, scale=1.0]
\def\l{0.7cm}
\def\s{2pt}
\def\h{0cm}
\def\r{2pt}
\draw[fill] (-\l,-\h) circle (\r) node[inner sep=\s] (a){};
\draw[fill] (\l,-\h) circle (\r) node[inner sep=\s] (b){};
\draw[-stealth] (a) to[bend left=70] node[inner sep=\s](A){}(b);
\draw[-stealth] (a)--node[inner sep=\s](B){}(b);
\node (AB) at ($(A)!0.5!(B)$) {{\scriptsize $\Downarrow$}};
\end{tikzpicture}
\right)
%
%
\aamalg{N\left(
\begin{tikzpicture}[baseline={([yshift=-0.05cm]current bounding box.center)}, scale=0.5]
\def\l{0.7cm}
\def\s{2pt}
\def\r{4pt}
\draw[fill] (-\l,0) circle (\r) node[inner sep=\s] (a){};
\draw[fill] (\l,0) circle (\r) node[inner sep=\s] (b){};
\draw[-stealth] (a)--node[inner sep=\s](B){}(b);
\end{tikzpicture}
\right)}
N\left(
\begin{tikzpicture}[baseline={([yshift=0.3cm]current bounding box.center)}, scale=1.0]
\def\l{0.7cm}
\def\s{2pt}
\def\h{0cm}
\def\r{2pt}
\draw[fill] (-\l,-\h) circle (\r) node[inner sep=\s] (a){};
\draw[fill] (\l,-\h) circle (\r) node[inner sep=\s] (b){};
\draw[-stealth] (a) to[bend right=70]node[inner sep=\s](C){}(b);
\draw[-stealth] (a)--node[inner sep=\s](B){}(b);
\node (BC) at ($(B)!0.5!(C)$) {{\scriptsize $\Downarrow$}};
\end{tikzpicture}
\right).
\]
  \end{coralph}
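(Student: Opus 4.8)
The plan is to deduce this from \cref{ThmBIntro} by recognising the second fundamental pushout of~(\ref{eq:FundPushout}) as a suspension. Writing $C_0, C_1, C_2$ for the free $0$-, $1$- and $2$-cells and $[2]$ for the free $1$-category on a composable pair $0\to1\to2$, a direct computation gives canonical isomorphisms $C_1\cong\Sigma C_0$ and $C_2\cong\Sigma C_1$, identifies the $2$-category on the left-hand side of the asserted equivalence --- the free $2$-category on two vertically stacked $2$-cells --- with $\Sigma[2]$, and shows that the two structure maps $C_1\to C_2$ are the images under $\Sigma$ of the two endpoint inclusions $C_0\rightrightarrows C_1$. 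Since $[2]$ is the pushout $C_1\amalg_{C_0}C_1$ in $\cat$, the whole square defining the second fundamental pushout is $\Sigma$ applied to the pushout square $C_1\amalg_{C_0}C_1\cong[2]$.

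Next I would observe that the base square becomes a homotopy pushout after applying the nerve. The nerve turns the endpoint inclusions into the monomorphisms $\Delta[0]\rightrightarrows\Delta[1]$ (with their canonical stratifications), whose pushout of stratified simplicial sets is the spine $\Delta[1]\amalg_{\Delta[0]}\Delta[1]$; its inclusion into $N[2]=\Delta[2]$ is, up to marking, the inner $2$-horn inclusion $\Lambda^2_1\hookrightarrow\Delta[2]$, which is an elementary complicial anodyne extension and hence a trivial cofibration in the saturated $2$-complicial model structure. As the two legs are cofibrations (monomorphisms), this exhibits $N[2]$ as the homotopy pushout $N C_1\amalg^{h}_{N C_0}N C_1$ of saturated $2$-complicial sets.

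Then I would apply the suspension functor on stratified simplicial sets to this homotopy pushout square. It is cocontinuous, so it preserves the pushout of cofibrations, and --- being left Quillen for the saturated $2$-complicial model structure, as set up in \cref{sec:SuspensionResults} --- it takes the trivial cofibration of the previous step to a weak equivalence; thus it produces a homotopy pushout
\[\Sigma(N C_1)\amalg^{h}_{\Sigma(N C_0)}\Sigma(N C_1)\;\simeq\;\Sigma(N[2]).\]
To finish, I would invoke \cref{ThmBIntro} with $\cP=C_0$, $\cP=C_1$ and $\cP=[2]$ to get equivalences $\Sigma(N C_0)\simeq N C_1$, $\Sigma(N C_1)\simeq N C_2$ and $\Sigma(N[2])\simeq N(C_2\amalg_{C_1}C_2)$ --- compatible with the structure maps by naturality of the comparison and the identifications of the first step --- and substitute them in, obtaining $N(C_2\amalg_{C_1}C_2)\simeq N C_2\amalg^{h}_{N C_1}N C_2$, which is the claim.

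The main obstacle is the homotopical good behaviour of the $(\infty,2)$-suspension used in the third step: concretely, that it is left Quillen for the complicial model structures, or at least that it preserves cofibrations together with the trivial cofibration $\Lambda^2_1\hookrightarrow\Delta[2]$ above. Once the suspension has been set up in \cref{sec:SuspensionResults} this should be in hand, possibly after checking directly that $\Sigma$ sends complicial anodyne extensions to complicial anodyne extensions. The remaining points --- identifying the correct stratifications in the second step, and matching the structure maps through the naturality of \cref{ThmBIntro} --- are routine bookkeeping.
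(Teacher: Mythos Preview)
Your proposal is correct and follows essentially the same approach as the paper's proof of \cref{CorB}: start from the spine inclusion $\NRS[1]\amalg_{\NRS[0]}\NRS[1]\hookrightarrow\NRS[2]$ being a complicial inner anodyne extension, apply the suspension (using that it is left Quillen, hence homotopical, and commutes with connected colimits, \cref{suspensionhomotopical}), and then invoke \cref{ThmB} to exchange nerve and suspension. The only cosmetic difference is that you phrase things in terms of homotopy pushouts while the paper tracks the explicit comparison maps, and that the paper states the result for general $m$ rather than $m=2$.
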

  
  Finally, \cref{ThmBIntro} also yields the following corollary, which will appear as \cref{CorC}. It asserts that the nerve embedding preserves the equivalence between the free $1$-cell $C_1$ and the free living $2$-isomorphism. This is another condition that in other models is encoded into a completeness axiom, and is instead combinatorially involved in the model of saturated complicial sets.
  
  \begin{coralph}
    \label{CorCIntro}
In the model of saturated $2$-complicial sets, there is an equivalence of $(\infty,2)$-categories
    \[
N\left(
\begin{tikzpicture}[baseline={([yshift=-0.1cm]current bounding box.center)}, scale=1.0]
\def\l{0.7cm}
\def\s{2pt}
\def\r{2pt}
\draw[fill] (-\l,0) circle (\r) node[inner sep=\s] (a){};
\draw[fill] (\l,0) circle (\r) node[inner sep=\s] (b){};
\draw[-stealth] (a) to[bend left=70] node[inner sep=\s](As){}(b);
\draw[-stealth] (a) to[bend right=70]node[inner sep=\s](At){}(b);
\node (A) at ($(As)!0.5!(At)$) {{\scriptsize { {\Large $\Downarrow$} $\cong$}}};
\end{tikzpicture}
\right) \simeq N\left(
\begin{tikzpicture}[baseline={([yshift=-0.1cm]current bounding box.center)}, scale=1.0]
\def\l{0.7cm}
\def\s{2pt}
\def\h{0cm}
\def\r{2pt}
\draw[fill] (-\l,-\h) circle (\r) node[inner sep=\s] (a){};
\draw[fill] (\l,-\h) circle (\r) node[inner sep=\s] (b){};
\draw[-stealth] (a)--(b);
\end{tikzpicture}
\right).
\]
  \end{coralph}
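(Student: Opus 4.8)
The plan is to recognise both sides of the claimed equivalence as suspensions of $1$-categories and then to invoke \cref{ThmBIntro} twice, together with homotopy-invariance of suspension. First I would record the two identifications of strict categories underlying the statement. The free living $2$-isomorphism is the suspension $\Sigma I$ of the \emph{walking isomorphism} $I$, namely the $1$-category with two objects $0,1$, one nonidentity morphism $e\colon 0\to 1$ and its inverse: unwinding the definition of $\Sigma$ from \cref{sec:SuspensionResults}, the only nontrivial hom-category of $\Sigma I$ is $\Hom(0,1)=I$, whose unique isomorphism is exactly the generating invertible $2$-cell (and its inverse), and there is nothing else. Similarly the free $1$-cell is $C_1\cong\Sigma C_0$, the suspension of the terminal category $C_0$, whose only nontrivial hom-category $\Hom(0,1)=C_0$ contributes a single $1$-cell and no nonidentity $2$-cell.

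Next I would apply \cref{ThmBIntro} to the $1$-categories $I$ and $C_0$, producing equivalences of $(\infty,2)$-categories
\[
N(\Sigma I)\simeq\Sigma(NI)\qquad\text{and}\qquad N(\Sigma C_0)\simeq\Sigma(NC_0).
\]
The unique functor $I\to C_0$ is an equivalence of $1$-categories, since every hom-set of $I$ is a singleton; as the complicial nerve preserves equivalences of strict $n$-categories, this gives $NI\simeq NC_0$. The $(\infty,1)$-categorical suspension is homotopical — it is a left adjoint, indeed left Quillen, and every object is cofibrant, as recalled in \cref{sec:SuspensionResults} — so it carries $NI\simeq NC_0$ to an equivalence $\Sigma(NI)\simeq\Sigma(NC_0)$. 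Stringing the three equivalences together yields
\[
N\bigl(\text{free living }2\text{-isomorphism}\bigr)=N(\Sigma I)\simeq\Sigma(NI)\simeq\Sigma(NC_0)\simeq N(\Sigma C_0)=N(C_1),
\]
which is the asserted equivalence.

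I expect the real work to be hidden in \cref{ThmBIntro} and in the two bookkeeping identifications rather than in the formal chain above. The delicate point is to verify that the object informally depicted as the nerve of the free living $2$-isomorphism is literally the complicial nerve of the strict $2$-category $\Sigma I$ together with the markings rendering it saturated; a priori this stratified set carries more marked simplices than $N(C_1)$, and it is precisely the saturation condition that makes the two become equivalent, which is why the statement is a genuine assertion in the complicial model rather than a formality. One must also check that the $\Sigma$ occurring on the right-hand side of \cref{ThmBIntro} is the homotopy-invariant (complicial) suspension, so that the equivalence $NI\simeq NC_0$ may legitimately be transported through it.
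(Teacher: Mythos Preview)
Your proposal is correct and follows essentially the same route as the paper: identify both $2$-categories as suspensions $\Sigma\mathbb I$ and $\Sigma[0]$, use that $\NRS$ sends the equivalence $[0]\hookrightarrow\mathbb I$ to a weak equivalence, apply the homotopicality of the complicial suspension, and invoke \cref{ThmBIntro} to commute nerve and suspension. Your closing caveat about saturating the nerve is slightly off—$\NRS(\Sigma\mathbb I)$ is the Roberts--Street nerve with only identity cells marked, not its saturation—but this does not affect the argument, since the weak equivalences in question are taken in the model structure for saturated $2$-complicial sets.
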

  
  \addtocontents{toc}{\protect\setcounter{tocdepth}{1}}
  \subsection*{Acknowledgements} 
  We would like to thank Clark Barwick and Chris Schommer-Pries for explanations concerning fundamental pushouts, and Len\-nart Meier and Emily Riehl for helpful conversations on this project.
{This material is based upon work supported by the National Science Foundation under Grant No.\ DMS-1440140 while the authors were in residence at the Mathematical Sciences Research Institute in Berkeley, California, during the Spring 2020 semester. The first-named author thankfully acknowledges the financial support by the DFG grant OZ 91/2-1 with the project nr.~442418934.}
  \newpage
  
    \tableofcontents
    
   \section{Background on $n$-complicial sets}
  We assume the reader to be familiar with the basics of strict higher category theory (see e.g.~\cite{LeinsterHigherOperads}) and with the model categorical language (see e.g.~\cite{Hirschhorn,hovey}), and we recall the preliminary material that will be used in the paper.

  The category $n\cat$ of $n$-categories is defined recursively as the category of categories enriched over the category of $(n-1)$-categories, assuming that the category of $0$-categories is the category $\set$ of sets with the cartesian product.
In particular, an \emph{$n$-category} $\cD$ consists of a set of objects and for any objects $x,x'$ an $(n-1)$-category $\Map_{\cD}(x,x')$, together with a horizontal composition that defines a functor of hom-$(n-1)$-categories
$\circ\colon\Map_{\cD}(x,x')\times\Map_{\cD}(x',x'')\to\Map_{\cD}(x,x'').$
 For $n=\infty$, the convention above specializes to an \emph{$\omega$-category}, as in \cite{StreetOrientedSimplexes,VerityComplicialAMS}.

The following model structure models the standard homotopy theory of $n$-categories. It recovers the canonical model structure for $1$-categories as well as Lack's model structure for $2$-categories from \cite{lack1}.

  \begin{thm}[{\cite[Thm 5]{LMW}}]
\label{modelstructureondiscretepresheaves}
Let $n\in\mathbb N\cup\{\infty\}$. The category $n\cat$ supports a cofibrantly generated model structure in which
\begin{itemize}[leftmargin=*]
\item all $n$-categories are fibrant;
\item the weak equivalences are precisely the $n$-categorical equivalences.
\end{itemize}
\end{thm}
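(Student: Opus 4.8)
This is the ``folk'' model structure on $n\cat$, and the strategy I would follow is the one of \cite{LMW}, organized through Jeff Smith's recognition criterion. Set $\mathcal W$ to be the class of $n$-categorical equivalences, and let $I$ be the set consisting of the boundary inclusions $\partial C_k \hookrightarrow C_k$ of the free $k$-cells for $0 \le k \le n$ (all $k \ge 0$ when $n = \infty$), together with, when $n$ is finite, the fold map $C_n \sqcup_{\partial C_n} C_n \to C_n$ collapsing the two copies of the top cell. Declare the cofibrations to be the weakly saturated class $\mathrm{cof}(I)$. Since $n\cat$ is locally finitely presentable, Smith's theorem reduces the problem to checking: (a) $\mathcal W$ has the two-out-of-three property, is closed under retracts, and satisfies the solution-set condition at $I$ (e.g.\ because it is an accessibly embedded accessible subcategory of $\mathrm{Fun}([1], n\cat)$); (b) every $I$-injective map lies in $\mathcal W$; and (c) there is a set $J$ of cofibrations contained in $\mathcal W$ such that every relative $J$-cell complex lies in $\mathcal W$ and such that any $J$-injective map lying in $\mathcal W$ is already $I$-injective. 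Granting this, Smith's theorem yields the model structure, with fibrations the $J$-injective maps and trivial cofibrations generated by $J$.

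Items (a) and (b) I would prove by induction on $n$, using the recursive presentation of $n\cat$ as categories enriched in $(n-1)\cat$, with base case $0\cat = \set$ (where $\mathcal W$ is the isomorphisms and $I = \{\emptyset \to C_0,\ C_0 \sqcup C_0 \to C_0\}$). An $n$-functor $F$ is an $n$-equivalence exactly when it is essentially surjective and each $\Map(x,x') \to \Map(Fx,Fx')$ is an $(n-1)$-equivalence, so two-out-of-three, retract-closure and accessibility of $\mathcal W$ descend one dimension at a time. Likewise, an $I$-injective $F$ is surjective on objects (lifting against $\partial C_0 \to C_0$ and $\partial C_1 \to C_1$) and its hom-functors are injective against the globe boundary inclusions of $(n-1)\cat$ (the higher globe inclusions of $n\cat$ restrict on homs to those of $(n-1)\cat$), while the fold generator forces faithfulness at the top dimension; by induction the hom-functors are $(n-1)$-equivalences, so $F \in \mathcal W$.

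The technical heart, and the step I expect to be the main obstacle, is the construction of $J$ and the verification of (c). For $0 \le k < n$ (all $k$ when $n=\infty$) let $E_k$ be the free $n$-category obtained from $C_k$ by freely adjoining a \emph{reversible} (weakly invertible) $(k{+}1)$-cell having the generating $k$-cell of $C_k$ as a boundary face, together with a chosen weak inverse and chosen higher cells witnessing invertibility, and let $J$ be the set of inclusions $C_k \hookrightarrow E_k$, with a suitable modification at the top dimension when $n$ is finite. One then needs: (i) that each $C_k \hookrightarrow E_k$ is an $n$-equivalence; (ii) that $\mathcal W$ is closed under pushout along cofibrations and under transfinite composition --- a gluing lemma, again proved by reduction to the enrichment base --- so that relative $J$-cell complexes lie in $\mathcal W$; and (iii) that a $J$-injective $n$-equivalence is $I$-injective. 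Both (i) and (iii) rest on the structure theory of reversible cells in strict higher categories from \cite{LMW}: in any $n$-category a reversible cell can be straightened and the data exhibiting its invertibility can be rigidified, so that $J$-injectivity together with essential surjectivity and faithfulness-up-to-equivalence on cells can be upgraded to genuine lifting against the globe boundary inclusions. This rigidification is the part with no shortcut through formal model-category arguments.

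Finally, once $J$ is in place, every $n$-category $D$ is fibrant: the map $D \to *$ has the right lifting property against each generator $C_k \hookrightarrow E_k$, since an $n$-functor $C_k \to D$ selects a $k$-cell $x$ of $D$ and extends over $E_k$ by sending the formal reversible $(k{+}1)$-cell, its chosen inverse and all the witnessing higher cells to the identity $(k{+}1)$-cell on $x$ and to identities, the identity cell being reversible with identities as its invertibility data. Combining this with the factorizations produced by the small object argument applied to $I$ and to $J$ gives the cofibrantly generated model structure of the statement, with all objects fibrant and with weak equivalences precisely the $n$-categorical equivalences; the reconciliation of the finite-$n$ and $n = \infty$ cases, and the bookkeeping at the top dimension, are routine once the $\omega$-categorical argument is set up.
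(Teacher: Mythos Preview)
The paper does not give a proof of this statement: it is quoted as a background result, with the citation \cite[Thm~5]{LMW} serving as the entire justification. There is therefore nothing in the paper to compare your proposal against.

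That said, your sketch is a reasonable outline of how the cited result is established, with one organizational difference worth noting: Lafont--M\'etayer--Worytkiewicz do not invoke Smith's recognition theorem but instead verify the model-category axioms directly for $\omega\cat$, building the factorizations by hand via the small object argument and an explicit analysis of reversible cells; the finite-$n$ case is then deduced by truncation rather than by adjoining a top-dimensional fold generator to $I$. Your repackaging through Smith's theorem is a legitimate alternative route, and you correctly identify the rigidification of invertibility data as the non-formal core of the argument.
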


In this paper, we will consider a model of $(\infty,n)$-categories due to Verity based on the following mathematical object.

\begin{defn}
A \emph{simplicial set with marking}\footnote{Originally referred to as \emph{simplicial set with hollowness} in \cite{StreetOrientedSimplexes} and later as \emph{stratified simplicial set} e.g.~in \cite{VerityComplicialAMS}.} 
is a simplicial set endowed with a subset of simplices of strictly positive dimensions that contain all degenerate simplices, called \emph{thin} or \emph{marked}. We denote by $m\sset$ the category of simplicial sets with marking and marking preserving simplicial maps.
\end{defn} 

\begin{rmk}
The underlying simplicial set functor $m\sset\to\sset$ respects limits and colimits, since it is both a left and a right adjoint (see e.g.~\cite[Obs.~ 97]{VerityComplicialAMS}), and it preserves and reflects monomorphisms, since it is a faithful right adjoint.
 Moreover, as explained in \cite [Obs.~109]{VerityComplicialAMS},
\begin{itemize}[leftmargin=*]
\item 
 a simplex is marked in a limit of simplicial sets with marking $\lim_{i\in I}X_i$ if and only  if it is marked in each component $X_i$ for $i\in I$, and
 \item a simplex is marked in a colimit of simplicial sets with marking $\colim_{i\in I}X_i$ if and only if it admits a marked representative in $X_i$ for some $i\in I$.
 \end{itemize}
\end{rmk}

The following model structure provides a model for the homotopy theory of $(\infty,n)$-categories. It is obtained applying Verity's machinery \cite[\textsection 6.3]{VerityComplicialAMS} to a special set of anodyne extensions, described in \cite{EmilyNotes} and recalled in \cref{anodynemaps}.

\begin{thm}[{\cite[Thm 1.28]{or}}]
Let $n\in\mathbb N\cup\{\infty\}$.
\label{modelstructureondiscretepresheaves}
The category $m\sset$ supports a cofibrantly generated left proper model cartesian structure where
\begin{itemize}[leftmargin=*]
\item the fibrant objects are precisely the \emph{saturated $n$-complicial sets}, i.e., those with the right lifting property with respect to the elementary anodyne extensions, recalled in \cref{anodynemaps};
\item the cofibrations are precisely the monomorphisms (of underlying simplicial sets).
\end{itemize}
We call this model structure the \emph{model structure for $(\infty,n)$-categories}, and we call the weak equivalences the $(\infty,n)$-weak equivalences.
\end{thm}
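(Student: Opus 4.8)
The plan is to deduce the statement from Verity's general construction of model structures on stratified simplicial sets in \cite[\textsection 6.3]{VerityComplicialAMS}, applied to the set $J$ of elementary anodyne extensions recalled in \cref{anodynemaps}. First I would record the purely categorical input. The category $m\sset$ is locally presentable, and a map is a monomorphism of underlying simplicial sets exactly when it lies in the weakly saturated class generated by the set $I$ consisting of the boundary inclusions $\partial\Delta[k]\hookrightarrow\Delta[k]$ for $k\ge 0$ together with the entire maps marking the top nondegenerate simplex of $\Delta[k]$ for $k\ge 1$ (minimal stratification elsewhere). Taking $I$ as the generating cofibrations therefore forces the cofibrations to be precisely the monomorphisms, giving the second bullet. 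Moreover every object of $m\sset$ is cofibrant, since $\emptyset\to X$ is injective on underlying simplicial sets; hence any model structure with these cofibrations is automatically left proper, the unit axiom for the cartesian structure is automatic, and the only remaining part of the ``cartesian'' clause to address is the pushout--product axiom for $\times$.

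Next I would run Verity's machinery with the generating anodyne extensions $J$: from such a set, subject to suitable closure conditions, his framework produces a cofibrantly generated model structure on $m\sset$ in which the cofibrations are the monomorphisms, the fibrant objects are exactly those with the right lifting property against $J$ (so, by definition, the saturated $n$-complicial sets), a map between fibrant objects is a fibration precisely when it has the right lifting property against $J$, and the weak equivalences are the maps $f\colon X\to Y$ such that $\Hom(Y,E)\to\Hom(X,E)$ is a homotopy equivalence of $J$-fibrant objects for every $J$-fibrant $E$ (homotopies formed with the interval $\Delta[1]_t$). The substantive hypothesis to verify amounts, in essence, to the closure of the class of $J$-anodyne maps (the weakly saturated closure of $J$) under pushout--product with the generating cofibrations in $I$: this is exactly the compatibility that yields the pushout--product axiom for $\times$, and it is also what makes the internal-hom manipulations in the weak-equivalence argument go through (so that $\Hom(-,E)$ carries $J$-anodyne maps to trivial fibrations and, in particular, the endpoint inclusions $\{i\}\times X\hookrightarrow\Delta[1]_t\times X$ are $J$-anodyne, since $\{i\}\hookrightarrow\Delta[1]_t$ is a complicial horn extension, exhibiting $\Delta[1]_t\times(-)$ as a cylinder functor of the expected kind). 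The case where $J$ consists only of the complicial horn and thinness extensions is precisely Verity's theorem, so the remaining work is to re-verify this closure property once one adjoins the $n$-triviality extensions (forcing thinness of every simplex of dimension $>n$) and the saturation extensions (built from $\eqDelta$-type stratified simplices, forcing thinness of edges that are exhibited as equivalences).

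I expect the pushout--product analysis for the saturation extensions to be the main obstacle. They are the least local of the generators — entire maps, identities on underlying simplicial sets that merely enlarge the marking — and the delicate point is to show that the pushout--product of such a map with a boundary inclusion $\partial\Delta[k]\hookrightarrow\Delta[k]$ again lies in the weakly saturated closure of $J$; this calls for careful bookkeeping of which simplices acquire a marking and a decomposition of the resulting inclusion as a transfinite composite of saturation and thinness extensions (and, where relevant, $n$-triviality extensions), using that the underlying simplicial map stays an isomorphism so that the pushout--product remains entire. The corresponding check for the $n$-triviality extensions is easier, since their pushout--products with the maps of $I$ break up into degenerate instances of $n$-triviality and thinness extensions. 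A clean alternative packaging of the same content is to start from the $n$-complicial model structure on $m\sset$ (obtained the same way, without the saturation extensions) and form its left Bousfield localization at the set of saturation maps, which exists and is left proper and combinatorial with cofibrations unchanged; on that route the same obstacle reappears as the identification of the $n$-complicial sets local with respect to the saturation maps with those having the right lifting property against the saturation anodyne extensions, which again reduces to the pushout--product closure above.
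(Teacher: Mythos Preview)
The paper does not prove this theorem: it is quoted verbatim as \cite[Thm 1.28]{or}, with only the one-line remark preceding it that the model structure ``is obtained applying Verity's machinery \cite[\textsection 6.3]{VerityComplicialAMS} to a special set of anodyne extensions, described in \cite{EmilyNotes} and recalled in \cref{anodynemaps}.'' There is therefore nothing in the paper to compare your argument against beyond that sentence, and your proposal is precisely an elaboration of that sentence---run Verity's construction with the generating set $J$ of \cref{anodynemaps}---so at the level of approach you are aligned with what the paper indicates.

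That said, since you are sketching the actual argument rather than citing it, one point is worth flagging. Your treatment of the saturation extensions, where you anticipate the pushout--product of an entire map with a boundary inclusion $\partial\Delta[k]\hookrightarrow\Delta[k]$ to remain entire, is not right as stated: the pushout--product of an identity-on-underlying map with a non-identity monomorphism is typically not an identity on underlying simplicial sets. What is true (and is what the actual proofs use) is that such pushout--products can be analysed via Verity's machinery for entire maps combined with the closure of the anodyne class under pushout--product with monomorphisms; the saturation case is indeed the delicate one and is handled in \cite{or} (and \cite[App.~D]{RiehlVerityBook}) rather than here. Your alternative packaging via left Bousfield localization of the $n$-complicial model structure is also how the cited references organize it.
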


The interpretation is that, in a saturated $n$-complicial sets, the marked $k$-simplices are precisely the $k$-equivalences. We refer the reader e.g.~to \cite{EmilyNotes} for further elaboration on this viewpoint.

In order to recall the elementary anodyne extensions, we need also the following preliminary terminology and notation.

\begin{defn}
A sub-simplicial set with marking $X$ of a simplicial set with marking $Y$ is \emph{regular} if a simplex of $X$ is marked in $X$ if and only if it is marked in $Y$.
\end{defn}

\begin{notn} 
\label{preliminarynotation}
We denote
\begin{itemize}[leftmargin=*]
    \item by $\Delta[m]$ the standard $m$-simplex in which exactly the degenerate simplices are marked;
       \item by $\Delta[m]_{t}$ the standard $m$-simplex in which the only marked non-degenerate simplex is the top-dimensional one;
    \item by $\Delta^k[m]$, for $0\leq k \leq m$, the standard $m$-simplex in which a non-degenerate simplex is marked if and only if it contains the vertices $\{k-1,k,k+1\}\cap [m]$;
    \item by $\Delta^k[m]'$, for $0\leq k \leq m$, the standard $m$-simplex with marking obtained from $\Delta^k[m]$ by additionally marking the $(k-1)$-st and $(k+1)$-st face of $\Delta[m]$;
    \item by $\Delta^k[m]''$, for $0\leq k \leq m$, the standard $m$-simplex with obtained from $\Delta^k[m]'$ by additionally marking the $k$-th face of $\Delta[m]$;
    \item by $\Lambda^k[m]$, for $0\leq k \leq m$, the regular sub-simplicial set of $\Delta^k[m]$ with marking whose simplicial set is the $k$-horn $\Lambda^k[m]$;
    \item by $\eqDelta$ the $3$-simplex in which the non-degenerate marked simplices consist of all $2$- and $3$-simplices, as well as $1$-simplices $[02]$ and $[13]$;
        \item by $\Delta[3]^\sharp$ the $3$-simplex in which all simplices in positive dimensions are marked.
\end{itemize}
\end{notn}

\begin{defn}
\label{anodynemaps}
Let $n\in\mathbb N\cup\{\infty\}$.
An \emph{$(\infty,n)$-elementary anodyne extension} is one of the following maps of simplicial sets with marking.
 \begin{enumerate}[leftmargin=*]
  \item  The \emph{complicial horn extension}, i.e., the canonical map
 $$\Lambda^k[m]\to \Delta^k[m]\text{ for $m\geq 1$ and $0\leq k\leq m$},$$
 which is an ordinary horn inclusion on the underlying simplicial sets.
 \item The \emph{thinness extension}, i.e., the canonical map 
$$\Delta^k[m]' \to \Delta^k[m]''\text{ for $m\geq 2$ and $0\leq k \leq m$},$$
which is an identity on the underlying simplicial set.
\item The \emph{triviality extension} map,
i.e., the canonical map
$$\Delta[l]\to \Delta[l]_t\text{ for $l>n$},$$
which is an identity on the underlying simplicial set.
\item The \emph{saturation extension}\footnote{Note that the last condition was phrased slightly different in \cite{or}, namely we used as elementary saturation anodyne extensions the maps
$\Delta[l]\star \Delta[3]_{eq}  \to \Delta[l]\star \Delta[3]^{\sharp}\text{ for $l\geq -1$}$.
As a consequence of the discussion following \cite[Def.~ D.7.9]{RiehlVerityBook}, the model structures resulting from both conditions are equal (in the presence of the remaining elementary anodyne extensions). We chose to work with this convention to simplify the proof of \cref{conehomotopical}.}, i.e., the canonical map
\[
\Delta[3]_{eq}\star \Delta[l] \to  \Delta[3]^{\sharp}\star \Delta[l]\text{ for $l\geq -1$}
\]
which is an identity on the underlying simplicial set.
Here, the construction $\star$ denotes the join construction of simplicial sets with marking, which is recalled in \cref{DefJoin}.
\end{enumerate}
\end{defn}

Although there is no explicit description of generating acyclic cofibrations for this model structure, the elementary anodyne extensions provide a good approximation, in the sense of the following lemma.

\begin{lem}
\label{lemmaleftquillen}
A functor $F\colon m\sset\to\cM$
is left Quillen when $m\sset$ is endowed with the model category for $(\infty,n)$-categories and $\cM$ is any model category if and only if $F$ is a left adjoint, it respects cofibrations and sends all elementary  anodyne extensions from \cref{anodynemaps} to weak equivalences of $\cM$.
\end{lem}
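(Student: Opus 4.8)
The plan is to treat the two implications separately. The forward one is formal: if $F$ is left Quillen then it is a left adjoint preserving cofibrations by definition, and each elementary anodyne extension from \cref{anodynemaps} is a cofibration (a horn inclusion on underlying simplicial sets in case (1), an identity in cases (2)--(4)) which is moreover an $(\infty,n)$-weak equivalence, since the elementary anodyne extensions are acyclic cofibrations by the construction of the model structure (\cite{VerityComplicialAMS, or}); as left Quillen functors preserve acyclic cofibrations, $F$ carries each of them to a weak equivalence.

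For the backward implication I would assume $F$ is a left adjoint that preserves cofibrations and carries elementary anodyne extensions to weak equivalences; since $F$ preserves cofibrations it then carries them to acyclic cofibrations of $\cM$, and it suffices to show $F$ preserves \emph{every} acyclic cofibration. The naive approach --- factor an acyclic cofibration via the small object argument as an elementary-anodyne relative cell complex followed by a map right orthogonal to the elementary anodyne extensions, and recognise the original map as a retract of the cell complex --- does not close up, because such a right-orthogonal map need not be a trivial fibration: the elementary anodyne extensions do not generate the full class of acyclic cofibrations of the saturated model structure, which is exactly why no explicit generating set is available.

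The remedy I would use is to route the argument through the unsaturated $n$-complicial model structure, whose fibrant objects have the right lifting property against the elementary anodyne extensions of types (1), (2) and (3) only. For that model structure, the acyclic cofibrations \emph{do} form the weakly saturated class generated by those extensions; this rests on the complicial closure properties, namely that the Leibniz products of the elementary anodyne extensions with the generating cofibrations $\partial\Delta[m]\hookrightarrow\Delta[m]$ and $\Delta[m]\to\Delta[m]_t$ are again anodyne, the analogue of the familiar statement in $\sset$ (\cite{VerityComplicialAMS, EmilyNotes, RiehlVerityBook}). Since $F$ preserves cofibrations, colimits and retracts, and acyclic cofibrations of $\cM$ are closed under pushout, transfinite composition and retract, the first step shows that $F$ sends every acyclic cofibration of the $n$-complicial model structure to an acyclic cofibration of $\cM$; hence $F$ is left Quillen for the $n$-complicial model structure. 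Finally, the model structure for $(\infty,n)$-categories arises as the left Bousfield localization of the $n$-complicial one at the saturation extensions of type (4) --- which is what adjoining the type-(4) extensions to Verity's input amounts to --- so by the recognition criterion for left Quillen functors out of a left Bousfield localization (\cite{Hirschhorn}), where cofibrant approximations play no role since every object of $m\sset$ is cofibrant, $F$ is left Quillen for the model structure for $(\infty,n)$-categories, being left Quillen for the $n$-complicial structure and carrying each saturation extension (itself an elementary anodyne extension) to a weak equivalence.

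The step I expect to be the main obstacle is the middle one: the acyclic cofibrations of the saturated model structure admit no explicit generating set, so they cannot be handled directly, and one must instead interpose the $n$-complicial model structure and the Bousfield localization. The two substantive, model-specific inputs are the complicial closure properties and the identification of the saturated structure as a left Bousfield localization of the $n$-complicial one; granting these from the literature, what remains is routine bookkeeping with the closure of (acyclic) cofibrations under colimits and retracts, all of which $F$ respects.
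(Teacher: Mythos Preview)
Your forward implication is fine, and the Bousfield localization step at the end is formally correct. The gap is in the middle: your claim that in the \emph{unsaturated} $n$-complicial model structure the acyclic cofibrations coincide with the weakly saturated class generated by the elementary anodyne extensions of types (1)--(3) is not justified and is almost certainly false. The Leibniz closure properties you cite show that the class of anodyne extensions is stable under pushout-product with cofibrations, which is exactly the input needed to build the model structure \`a la Cisinski--Olschok; but what that machinery yields is a characterisation of the \emph{fibrations between fibrant objects} as the maps with the right lifting property against the elementary anodyne extensions, not a characterisation of the acyclic cofibrations. The situation is completely parallel to the Joyal model structure on $\sset$: inner horn inclusions satisfy the pushout-product axiom, yet $\{0\}\hookrightarrow N\mathbb I$ is an acyclic cofibration which is not inner anodyne. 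So the very obstruction you correctly identified for the saturated structure recurs verbatim for the unsaturated one, and routing through it buys nothing.

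The paper avoids this entirely by dualising. Cisinski--Olschok theory gives that fibrations between fibrant objects are exactly the maps with the right lifting property against the elementary anodyne extensions; by adjunction, $F$ sends elementary anodyne extensions to acyclic cofibrations if and only if its right adjoint $G$ preserves fibrations between fibrant objects; and by the Joyal--Tierney criterion \cite[Prop.~7.15]{JT}, a left adjoint preserving cofibrations is left Quillen precisely when its right adjoint preserves fibrations between fibrant objects. This three-line argument never touches the intractable class of acyclic cofibrations.
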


\begin{proof}
By Cisinski--Olschok theory (see e.g.~\cite[Theorem 3.16, Lemma 3.30]{Olschok}),
one can show that the fibrations between fibrant objects in the model structure for $(\infty,n)$-categories are precisely the maps having the right lifting property with respect to the elementary anodyne extensions from \cref{anodynemaps}. By adjointness, if $F$ is a left adjoint functor that respects cofibrations, it sends elementary anodyne extensions to weak equivalences if and only if the right adjoint preserves fibrations between fibrant objects. By \cite[Proposition 7.15]{JT}, this is equivalent to saying that $F$ is a left Quillen functor, as desired.
\end{proof}

As a special case of the slice model structures, constructed e.g.~in \cite{HirschhornOvercategories}, we also obtain model structure on the category $m\sset_{*}$ of pointed simplicial sets with marking and on the category $m\sset_{*,*}$ of bi-pointed simplicial sets with marking.

\begin{prop}
\label{pointedmodelstructure}
The category $m\sset_*$, resp. $m\sset_{*,*}$, supports a cofibrantly generated left proper model structure where
\begin{itemize}[leftmargin=*]
\item the fibrant objects are precisely the pointed, resp.~bipointed, simplicial sets with marking whose underlying simplicial sets with marking are saturated $n$-complicial sets.
\item the cofibrations are precisely the monomorphisms (on underlying simplicial sets).
\end{itemize}
We call this model structure the \emph{model structure for pointed $(\infty,n)$-categories}, resp.~the \emph{model structure for bi-pointed $(\infty,n)$-categories}.
\end{prop}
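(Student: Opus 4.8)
The plan is to exhibit $m\sset_*$ and $m\sset_{*,*}$ as undercategory (coslice) model structures over the model structure for $(\infty,n)$-categories on $m\sset$ from \cref{modelstructureondiscretepresheaves}, and to invoke the general existence result for such model structures, e.g.\ \cite{HirschhornOvercategories}. Concretely, one identifies $m\sset_* = \Delta[0]/m\sset$ and $m\sset_{*,*} = (\Delta[0]\sqcup\Delta[0])/m\sset$, the undercategory of $m\sset$ under the simplicial set with marking consisting of two disjoint $0$-simplices (no non-degenerate marked simplices). The only input needed about $m\sset$ is that it is a cofibrantly generated left proper model category, which is exactly the content of \cref{modelstructureondiscretepresheaves}.

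First I would recall the general statement: for an object $A$ of a cofibrantly generated left proper model category $\cM$, the undercategory $A/\cM$ carries a cofibrantly generated left proper model structure in which a morphism is a weak equivalence, fibration, or cofibration precisely when its image under the forgetful functor $U\colon A/\cM\to\cM$ is one; moreover, sets of generating (acyclic) cofibrations are obtained by applying the left adjoint $A\sqcup(-)\colon\cM\to A/\cM$ of $U$ to generating (acyclic) cofibrations of $\cM$. Left properness is inherited because pushouts in $A/\cM$ are computed as pushouts in $\cM$ (with the evident induced map out of $A$), while cofibrations and weak equivalences are detected on underlying objects; cofibrant generation follows because $U$ creates the filtered colimits used in the small object argument, so smallness transfers along $A\sqcup(-)$. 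Applying this with $\cM=m\sset$ and $A=\Delta[0]$, resp.\ $A=\Delta[0]\sqcup\Delta[0]$, yields the two model structures.

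Finally I would unwind the explicit descriptions. A map is a cofibration in $A/m\sset$ if and only if its underlying map in $m\sset$ is a cofibration, i.e.\ a monomorphism of underlying simplicial sets, as claimed. For the fibrant objects: $(X, A\to X)$ is fibrant in $A/m\sset$ if and only if the unique map to the terminal object $(\ast, A\to\ast)$ is a fibration, which happens if and only if $X\to\ast$ is a fibration in $m\sset$, i.e.\ if and only if $X$ is fibrant; by \cref{modelstructureondiscretepresheaves} this says exactly that the underlying simplicial set with marking of $X$ is a saturated $n$-complicial set. There is no genuine obstacle in this argument; the only point requiring a little care is checking that the undercategory construction preserves cofibrant generation and left properness and that the generating sets produced by $A\sqcup(-)$ satisfy the hypotheses of the cited recognition theorem --- alternatively, one can run Kan's recognition criterion for cofibrantly generated model structures directly on the images under $A\sqcup(-)$ of the generating (acyclic) cofibrations of $m\sset$.
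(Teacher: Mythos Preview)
Your proposal is correct and matches the paper's approach exactly: the paper does not give a separate proof but simply notes, immediately before the statement, that this is a special case of the slice (undercategory) model structures constructed in \cite{HirschhornOvercategories}. Your identification of $m\sset_*$ and $m\sset_{*,*}$ as coslice categories under $\Delta[0]$ and $\Delta[0]\sqcup\Delta[0]$ and the unwinding of fibrant objects and cofibrations is precisely the intended argument.
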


We fix the following terminology.

\begin{defn}
A map of simplicial sets with marking $X\to Y$ is a \emph{complicial inner anodyne extension} if it can be written as a retract of a transfinite composition of pushouts of maps of the following form:
\begin{enumerate}[leftmargin=*]
 \item  inner complicial horn extensions
 $$\Lambda^k[m]\to \Delta^k[m]\text{ for $m> 1$ and $0< k< m$},$$
 \item complicial thinness extensions
$$\Delta^k[m]' \to \Delta^k[m]''\text{ for $m\geq 2$ and $0\leq k \leq m$}.$$
\end{enumerate}
\end{defn}

\begin{rmk}
 \label{underlyingcomplicialinner}
One can prove with standard model categorical techniques the following formal properties of complicial inner anodyne extensions.
\begin{enumerate}[leftmargin=*]
\item Any complicial inner anodyne extension is an $(\infty,n)$-acyclic cofibration.
\item The underlying simplicial map of a complicial inner anodyne extension is an inner anodyne extension of simplicial sets.
     \item The class of complicial inner anodyne extensions is closed under transfinite composition and pushouts.
\end{enumerate}
\end{rmk}

We will produce several complicial inner anodyne extensions using the following one.

\begin{lem}
 \label{CompMarkAtOnce} For $m\geq 2$ and $0<k<m$, let $\Lambda^k[m]'$ denote the regular subset of $\Delta^k[m]'$ whose underlying simplicial set is given by the $k$-horn $\Lambda^k[m]$. The inclusion
    \[\Lambda^k[m]'\to \Delta^k[m]''\text{ for $m\geq 2$, $0<k<m$}\]
is a complicial inner anodyne extension.
\end{lem}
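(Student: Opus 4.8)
The strategy is to factor the inclusion $\Lambda^k[m]' \to \Delta^k[m]''$ through an intermediate object by first filling the horn and then upgrading markings one face at a time, keeping track of which faces are marked at each stage. Recall that $\Lambda^k[m]'$ has underlying simplicial set the $k$-horn, and it is a regular subset of $\Delta^k[m]'$, so its marked simplices are exactly those simplices of the horn that contain $\{k-1,k,k+1\}\cap[m]$, together with the $(k-1)$-st and $(k+1)$-st faces $d_{k-1}\Delta[m]$ and $d_{k+1}\Delta[m]$ — these faces \emph{do} lie in the $k$-horn. The target $\Delta^k[m]''$ has, in addition, the top-dimensional simplex marked, all its faces of the correct form marked, and the $(k-1)$-st, $k$-th, and $(k+1)$-st faces marked.

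First I would form the pushout along the inner complicial horn extension $\Lambda^k[m] \to \Delta^k[m]$ of the regular structure at hand; more carefully, I would take the map $\Lambda^k[m]' \to Z$ where $Z$ is the simplicial set $\Delta[m]$ equipped with the marking that makes $Z$ the pushout of $\Delta^k[m] \leftarrow \Lambda^k[m] \to \Lambda^k[m]'$ in $m\sset$. By \cref{underlyingcomplicialinner}(2)–(3) this map is a complicial inner anodyne extension, being a pushout of the inner complicial horn inclusion $\Lambda^k[m]\to\Delta^k[m]$ (with $0<k<m$, $m>1$). Concretely $Z$ is $\Delta[m]$ with the simplices containing $\{k-1,k,k+1\}\cap[m]$ marked, plus the faces $d_{k-1}\Delta[m]$ and $d_{k+1}\Delta[m]$ marked; that is, $Z = \Delta^k[m]'$. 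So after one pushout we have reduced to showing that $\Delta^k[m]' \to \Delta^k[m]''$ is complicial inner anodyne.

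Next I would realize $\Delta^k[m]' \to \Delta^k[m]''$ as a single complicial thinness extension. Inspecting \cref{preliminarynotation}, $\Delta^k[m]''$ is obtained from $\Delta^k[m]'$ by additionally marking the $k$-th face $d_k\Delta[m]$, and this is precisely the definition of the complicial thinness extension $\Delta^k[m]'\to\Delta^k[m]''$ listed in \cref{anodynemaps}(2) and in the definition of complicial inner anodyne extension (item (2)), valid for $m\geq 2$ and all $0\leq k\leq m$, in particular for $0<k<m$. Hence $\Delta^k[m]'\to\Delta^k[m]''$ is itself one of the generating maps, so it is complicial inner anodyne. Composing the two complicial inner anodyne extensions $\Lambda^k[m]' \to \Delta^k[m]'$ and $\Delta^k[m]' \to \Delta^k[m]''$ and invoking closure under composition (\cref{underlyingcomplicialinner}(3)) gives the claim.

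The one point that requires genuine care — and which I expect to be the main obstacle — is the bookkeeping in the first pushout: one must check that pushing out the \emph{unmarked} horn inclusion $\Lambda^k[m]\to\Delta^k[m]$ along $\Lambda^k[m]\to\Lambda^k[m]'$ really produces $\Delta^k[m]'$ on the nose, i.e.\ that no extra simplices get marked and none are missed. This uses the colimit description of marking from the Remark following the definition of $m\sset$ (a simplex is marked in a pushout iff it has a marked representative in one of the pieces): the marked simplices of the pushout are exactly those marked in $\Delta^k[m]$ (the simplices meeting $\{k-1,k,k+1\}$) together with the images of those marked in $\Lambda^k[m]'$ (which adds the two faces $d_{k\pm 1}\Delta[m]$), and this is visibly the marking of $\Delta^k[m]'$. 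Once this identification is confirmed, the rest is immediate.
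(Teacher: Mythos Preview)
Your proposal is correct and follows essentially the same approach as the paper: factor through $\Delta^k[m]'$, identify $\Lambda^k[m]'\to\Delta^k[m]'$ as a pushout of the inner complicial horn extension $\Lambda^k[m]\to\Delta^k[m]$, and then use the thinness extension $\Delta^k[m]'\to\Delta^k[m]''$. Your extra paragraph verifying that the pushout marking agrees with that of $\Delta^k[m]'$ is a detail the paper leaves implicit, but it is exactly the right check.
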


\begin{proof}
The desired inclusion can be written as a composite
\[\Lambda^k[m]' \hookrightarrow \Delta^k[m]' \hookrightarrow \Delta^k[m]''\]
where the second arrow is a thinness anodyne extension and the first arrow is a pushout of an elementary inner complicial inner horn extension
\[
\begin{tikzcd}
\Lambda^k[m] \arrow[r] \arrow[d, hook]&\Lambda^k[m]' \arrow[d, hook] \\
\Delta^k[m] \arrow[r] & \Delta^k[m]'.
\end{tikzcd}
\]
This proves the claim.
\end{proof}

For any $n$-category $\cD$, Street \cite{StreetOrientedSimplexes} defined a simplicial nerve $N\cD$ in terms of the $n$-truncated orientals $\mathcal O_n[m]$. The $n$-category $\mathcal O_n[m]$ should be thought as the free $n$-category over an $m$-simplex. For a precise account on orientals we refer the reader to \cite{StreetOrientedSimplexes} or \cite[\textsection 7]{AraMaltsiniotisJoin}.

When $n=2$, we will make use of the following explicit description of the $2$-truncated oriental.

\begin{defn} Let $m\ge0$.
\label{orientals}
The \emph{$2$-truncated $m$-oriental}
is the $2$-category $\cO_2[m]$ in which
\begin{enumerate}[leftmargin=*]
\item[(0)] there are $m+1$ objects $x_0,\dots,x_{m}$;
    \item the $1$-morphisms are freely generated under composition by the $1$-morphisms $f_{ij}\colon x_i\to x_j$ for $i\le j$;
    \item the $2$-morphisms are generated under composition by the $2$-morphisms $\alpha_{ijk}\colon f_{ik}\Rightarrow f_{jk}\circ f_{ij}$ for $i<j<k$, subject to the relations that
    for any $i< j< k < s$ 
    \[
    (\id_{f_{ks}}\circ_h\alpha_{ijk})\circ_v\alpha_{iks}= (\alpha_{jks}\circ_h \id_{f_{ij}})\circ_v \alpha_{ijs}.
    \]
 \end{enumerate}
\end{defn}

\begin{rmk}
\label{RmkMathfrakC}
When regarded as a simplicial category, $\cO_2[m]$ is isomorphic to $\mathfrak C[\Delta[m]]$, the homotopy coherent realization of the standard simplex, as studied in \cite[Def.~1.1.5.1]{htt}.
In particular, there we find the following alternative description. For any $0\le i,j\le m$ the hom-category  $\Map_{\cO_2[m]}(x_i,x_j)$ is given by
    \[\Map_{\cO_2[m]}(x_i,x_j):=\left\{
    \begin{array}{ll}
            [1]^{j-i-1} & j> i \\
            {[0]} & j=i\\
       \varnothing  &  j<i.
    \end{array}\right.\]
    This can be reformulated further saying that each $1$-morphism of $\cO_2[m]$ from $x_i$ to $x_j$ is uniquely represented as a subset of $\{i, i+1,\dots,j-1, j\}$ containing $i$ and $j$, and each $2$-morphism is uniquely represented as an inclusion of such subsets.
\end{rmk}

The geometry of orientals is such that one can define the following nerve.

\begin{defn}
Let $n\in\mathbb N\cup\{\infty\}$.
The \emph{Street nerve} $N\cD$ of an $n$-category $\cD$ is the simplicial set in which
\begin{itemize}[leftmargin=*]
    \item an $m$-simplex is an $n$-functor $\mathcal O_n[m]\to \cD$.
    \item the simplicial structure is induced by the geometry of orientals.
\end{itemize}
\end{defn}

For $n=2$, the Street nerve was studied in detail by Duskin in \cite{duskin}, and can be described explicitly as follows.
 
\begin{defn}
The \emph{nerve} $N\cD$ of a $2$-category $\cD$ is the $3$-coskeletal simplicial set in which
\begin{enumerate}[leftmargin=*]
\item[(0)] a $0$-simplex consists of an object of $\cD$:
$$x;$$
    \item a $1$-simplex consists of a $1$-morphism of $\cD$:
     \[
\begin{tikzcd}
    x \arrow[rr, "a"{below}]&& y;
\end{tikzcd}
\]
    \item a $2$-simplex consists of a $2$-cell of $\cD$ of the form $c\Rightarrow b\circ a$:
    $$\begin{tikzcd}[baseline=(current  bounding  box.center)]
 & y \arrow[rd, "b"]  & \\
    x \arrow[ru, "{a}"]
  \arrow[rr, "c"{below}, ""{name=D,inner sep=1pt}]
  && z;
  \arrow[Rightarrow, from=D, 
 to=1-2, shorten >= 0.1cm, shorten <= 0.1cm, ""]
\end{tikzcd}$$
\item a $3$-simplex consists of four $2$-cells of $\cD$ that satisfy the following relation.
\[
\simpfver{d}{c}{e}{a}{b}{f}{}{}{}\simpfvercontinued{}{x}{y}{z}{w}
\]
\end{enumerate}
and in which the simplicial structure is as indicated in the pictures.
\end{defn}

The Street nerve can be endowed with the following marking, originally considered by Roberts in unpublished work and Street in \cite{StreetOrientedSimplexes}, further studied by Verity in \cite{VerityComplicialAMS}, and later discussed by Riehl in \cite{EmilyNotes}.

\begin{defn}
Let $n\in\mathbb N\cup\{\infty\}$.
The \emph{Roberts--Street nerve} is the simplicial set with marking $\NRS\cD$, in which
\begin{itemize}[leftmargin=*]
    \item the underlying simplicial set is the Street nerve $N\cD$, and
    \item an $m$-simplex of $N\cD$ is marked in $\NRS\cD$ if and only if the corresponding $n$-functor $\cO_n[m]\to\cD$ sends the top-dimensional $m$-cell of $\cO_n[m]$ to an identity of $\cD$. In particular, all simplices in dimension at least $n+1$ are marked.
\end{itemize}
 \end{defn}

We will use the following pointset and homotopical properties of $\NRS$.

 \begin{prop} \label{NRSproperties}
The Roberts--Street nerve
\[N^{RS}\colon n\cat\to m\sset\]
\begin{itemize}[leftmargin=*]
    \item is a right adjoint functor, and in particular respects all limits;
    \item is a homotopical functor between the model structure for $n$-categories and the model structure for $(\infty,n)$-categories if $n\le2$.
\end{itemize}
 \end{prop}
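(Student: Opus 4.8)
The plan is to treat the two assertions separately; the first is a routine nerve argument and the second is the substantial one.

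\textbf{$\NRS$ is a right adjoint.} Write $m\sset$ as the category of presheaves on the indexing category $t\Delta$ with objects $[m]$ ($m\ge0$) and $[m]_t$ ($m\ge1$), so that a marked simplicial set records its $m$-simplices on $[m]$, its marked $m$-simplices on $[m]_t$, and turns the canonical map $[m]\to[m]_t$ into the inclusion of marked into all $m$-simplices. Define $\cO_n^{RS}\colon t\Delta\to n\cat$ on $[m]$ by $\cO_n[m]$, with the operators induced by the geometry of orientals exactly as for the Street nerve, and on $[m]_t$ by the $n$-category $\cO_n[m]_t$ obtained from $\cO_n[m]$ by forcing the top-dimensional $m$-cell to be an identity --- i.e.\ the pushout in $n\cat$ corepresenting $\cD\mapsto\{\,n\text{-functors }\cO_n[m]\to\cD\text{ carrying the top cell to an identity}\,\}$, which equals $\cO_n[m]$ once $m>n$ --- sending $[m]\to[m]_t$ to the quotient $\cO_n[m]\to\cO_n[m]_t$ and the remaining structure in the evident way. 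Unwinding the definition of the Roberts--Street nerve gives a natural isomorphism $\NRS\cD\cong\Hom_{n\cat}(\cO_n^{RS}(-),\cD)$, exhibiting $\NRS$ as the nerve of $\cO_n^{RS}$; hence $\NRS$ has a left adjoint $\hotdelta$ (the left Kan extension of $\cO_n^{RS}$ along Yoneda) and, as a right adjoint, preserves all limits. (Alternatively this is implicit in the work of Street and Verity.)

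\textbf{$\NRS$ is homotopical for $n\le2$.} We must send $n$-categorical equivalences to $(\infty,n)$-weak equivalences. Since every $n$-category is fibrant in the model structure for $n$-categories, the dual of Ken Brown's lemma reduces this to showing that $\NRS$ carries every trivial fibration $p\colon\cE\to\cB$ of $n$-categories to an $(\infty,n)$-weak equivalence. The underlying simplicial map of $\NRS p$ is already a trivial fibration of simplicial sets, since by adjunction $\NRS p$ lifts against $\partial\Delta[m]\hookrightarrow\Delta[m]$ whenever $p$ lifts against $\hotdelta(\partial\Delta[m]\hookrightarrow\Delta[m])=(\partial\cO_n[m]\hookrightarrow\cO_n[m])$, and this inclusion of the boundary of the oriental is a cofibration of $n$-categories (bijective on objects, built by attaching cells). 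So the issue is to promote this to a weak equivalence in the complicial model structure.

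This is the crux, and the point where $n\le2$ is essential. The marking of $\NRS$ is minimal --- a simplex is marked only when the corresponding functor out of an oriental carries the top cell to a \emph{strict} identity, not merely to an equivalence --- so $\NRS p$ is typically not a trivial fibration of marked simplicial sets (already for the projection $E\to\ast$ out of the free-living isomorphism), and one must genuinely invoke the saturation axiom. (Trying to run the previous argument through \cref{lemmaleftquillen} fails for the same reason: $\hotdelta$ does not send the cofibration $\Delta[1]\to\Delta[1]_t$ to a cofibration of $n\cat$.) The approach I would take is to construct, for each $n$-category $\cD$ with $n\le2$, an explicit fibrant replacement $r_\cD\colon\NRS\cD\to R\cD$ into the Roberts--Street nerve equipped with its \emph{natural} marking --- a simplex is marked in $R\cD$ iff the corresponding functor out of the oriental carries the top cell to an equivalence --- by writing $r_\cD$ as a transfinite composite of pushouts of elementary anodyne extensions from \cref{anodynemaps}; the saturation extensions are precisely what promote the not-yet-marked equivalences to marked simplices, and for $n=2$ the explicit description of the orientals in \cref{orientals} and \cref{RmkMathfrakC} keeps the combinatorics under control. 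Checking that $R\cD$ is a saturated $n$-complicial set and that $r_\cD$ is indeed an $(\infty,n)$-weak equivalence is the main obstacle; this is exactly what is unavailable for $n\ge3$, where no tractable model of the fibrant replacement of a Roberts--Street nerve is known --- which is also why the hoped-for model comparisons remain out of reach there.

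Granting the previous step, the conclusion is formal. The map $Rp\colon R\cE\to R\cB$ induced by $p$ has the same underlying simplicial map as $\NRS p$, hence underlies a trivial fibration of simplicial sets and so lifts against $\partial\Delta[m]\hookrightarrow\Delta[m]$; and it reflects thinness with respect to the natural markings, because a trivial fibration of $n$-categories is an $n$-categorical equivalence and therefore reflects equivalences, whence $Rp$ lifts against $\Delta[m]\to\Delta[m]_t$ as well. Thus $Rp$ has the right lifting property against all monomorphisms of marked simplicial sets, so it is a trivial fibration there and in particular an $(\infty,n)$-weak equivalence. Since $r_\cE$ and $r_\cB$ are $(\infty,n)$-weak equivalences and the square relating them to $\NRS p$ and $Rp$ commutes, two-out-of-three shows $\NRS p$ is an $(\infty,n)$-weak equivalence, completing the reduction.
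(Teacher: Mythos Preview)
Your approach is essentially the paper's: introduce the \emph{natural} marking on the Street nerve (your $R\cD$ is exactly what the paper calls $\Nnat\cD$), know that the comparison $\NRS\cD\to\Nnat\cD$ is an $(\infty,n)$-weak equivalence, know that $\Nnat$ is homotopical, and conclude by two-out-of-three on the evident square. The paper simply cites both facts from \cite{Nerves2Cat} and \cite{or}; you correctly isolate the first as the substantive obstacle and leave it unproved, which is exactly the status the paper accords it. Two remarks on the places where you do more than the paper.

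First, your Ken Brown reduction and the direct argument that $Rp=\Nnat p$ is a trivial fibration in $m\sset$ is a nice self-contained replacement for the paper's citation that $\Nnat$ is homotopical; the reflection-of-marking step is correct because a trivial fibration of $n$-categories is conservative. However, the preliminary claim that the underlying simplicial map of $\NRS p$ is a trivial Kan fibration because ``$\hotdelta(\partial\Delta[m]\hookrightarrow\Delta[m])$ is a cofibration of $n$-categories (bijective on objects, built by attaching cells)'' is not adequately justified: the left adjoint $\hotdelta$ applied to a boundary inclusion is a colimit of orientals, and whether this lands in the cofibrations of the Lafont--M\'etayer--Worytkiewicz or Lack model structure is a genuine question that ``bijective on objects'' does not settle for $n=2$. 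You can sidestep this entirely by running your lifting argument directly for $Rp$ against the generating cofibrations $\partial\Delta[m]\hookrightarrow\Delta[m]$ and $\Delta[m]\hookrightarrow\Delta[m]_t$ of $m\sset$, which only uses that $p$ is surjective on cells of each dimension and reflects equivalences; the detour through trivial fibrations of the \emph{underlying} simplicial sets is unnecessary.

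Second, you do not need $R\cD$ to be a saturated $n$-complicial set for the argument to go through, and the paper does not assert this. What is needed is only that $r_\cD\colon\NRS\cD\to R\cD$ is an $(\infty,n)$-weak equivalence (the paper's citation to \cite{Nerves2Cat}), not that it is a fibrant replacement. Framing it as a fibrant replacement makes the obstacle look harder than it is.
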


 \begin{proof}
 The fact that $\NRS$ is a right adjoint can be found in \cite[\textsection 10.3]{VerityComplicialAMS}. We now argue that if $n\le2$ the functor $\NRS$ is a homotopical functor, using the following auxiliary construction, considered e.g.~in \cite[\textsection 3.2]{EmilyNotes}.
 
 Given any $n$-category for $n\le2$, one can consider the simplicial set with marking $\Nnat\cD$ in which the simplicial set is $N\cD$ and in which
 \begin{enumerate}[leftmargin=*]
        \item a $1$-simplex is marked in $\Nnat\cD$ if and only if the representing $1$-morphism in $\cD$ is an equivalence.
        \item a $2$-simplex is marked in $\Nnat\cD$ if and only if the representing $2$-morphism in $\cD$ is an isomorphism.
        \item all simplices of $\Nnat\cD$ in dimension $3$ or higher are marked.
    \end{enumerate}
    There is a natural inclusion of simplicial sets with marking $\NRS\cD\to\Nnat\cD$, which can be seen to be an $(\infty,n)$-weak equivalence combining \cite[Thm~5.2]{Nerves2Cat} and \cite[Prop.~1.31]{or}.
    The construction extends to a functor $\Nnat\colon n\cat\to m\sset$, which can be seen to be homotopical combining \cite[Thm~4.12]{Nerves2Cat} and \cite[Prop.~1.31]{or}.

 Now, suppose we are given a weak equivalence of $n$-categories $F\colon \cD\to \cD'$ for $n\le2$. It fits into the following commutative diagram 
\[{
\begin{tikzcd}
\NRS\cD\arrow[d, ""swap] \arrow[r, "\NRS F"]&\NRS\cD'\arrow[d, ""] \\
 N^{\natural}\cD \arrow[r, "\Nnat F"swap] & N^{\natural}\cD'.
\end{tikzcd}
}\]
By previous considerations, the vertical maps and the bottom map are equivalences of $(\infty,n)$-categories, so the top map must also be one.
 \end{proof}

   \section{Nerve vs suspension - The results}
   \label{sec:SuspensionResults}
   
  In this section, we illustrate the results and applications related to the compatibility of nerve and suspension constructions.
   
   We recall the $2$-categorical suspension\footnote{The $2$-categorical suspension $\Sigma\cD$ appears in \cite{BarwickSchommerPries} as $\sigma(\cD)$.   It also often appears in the literature as a special case of a simplicial suspension. For instance,
the homwise nerve $N_*(\Sigma\cD)$ of the suspension $\Sigma\cD$ is a simplicial category that agrees with what would be denoted as $U(N\cD)$ in \cite{bergner}, as $S(N\cD)$ in \cite{Joyal2007}, as $[1]_{N\cD}$ in \cite{htt}, 
and as ${\mathbbm{2}}[N\cD]$ in \cite{RiehlVerityNcoh}.}.

\begin{defn}
\label{suspension}
Let $\cD$ be a $1$-category $\cD$.
The \emph{suspension}
of $\cD$
is the $2$-category $\Sigma\cD$ in which
\begin{itemize}[leftmargin=*]
    \item[(a)] there are two objects $x_{\bot}$ and $x_{\top}$
    \item[(b)] the hom-$1$-categories given by 
\[
\Map_{\Sigma\cD}(a,b):=
\left\{
\begin{array}{cll}
\cD&\mbox{ if }a=x_{\bot},b=x_{\top}\\
{[0]}& \mbox{ if }a=b, \\
\varnothing& \mbox{ if } a=x_{\top},b=x_{\bot}
\end{array}
\right.
\]
\item[(c)] there is no nontrivial horizontal composition.
\end{itemize} 
\end{defn}

 \begin{ex} Let $k,l\ge0$.
\begin{itemize}[leftmargin=*]
    \item The suspension $\Sigma[k]$ of the poset $[k]$ is the free $k$-tuple of vertically composable $2$-morphisms, namely the $2$-category $[1|k]$ belonging to Joyal's cell category $\Theta_2$.
    \item The suspension $\Sigma([k]\times[l]^{\op})$ of the poset $[k]\times[l]^{\op}$ can be understood as a quotient of the $2$-truncated oriental $\cO_2[k+1+l]$ as explained by the following proposition.\footnote{Part of the arguments are inspired by \cite[\textsection 4]{VerityComplicialII} and \cite{AraMaltsiniotisVers}.}
    \item The suspension $\Sigma\mathbb I$ of the free isomorphism $\mathbb I$ is the walking $2$-isomorphism.
\end{itemize}
 \end{ex}
 
  \begin{prop}
  \label{quotientoforiental} 
  For any $k,l\geq -1$ there is a natural isomorphism of $2$-categories
\[\Sigma([k]\times [l]^{\op})\cong{}_{\cO_2[k]}\backslash\cO_2[k+1+l]/_{\cO_2[l]}\]
between the suspension of the poset $[k]\times[l]^{\op}$ and the quotient
${}_{\cO_2[k]}\backslash\cO_2[k+1+l]/_{\cO_2[l]}$ of the $2$-truncated $(k+1+l)$-oriental $\cO_2[k+1+l]$ obtained by collapsing $\cO_2[k]\cong \cO_2[\{0,\dots,k\}]\hookrightarrow \cO_2[k+1+l]$ to one point and $\cO_2[l]\cong\cO_2[\{k+1,\dots,k+1+l\}]\hookrightarrow\cO_2[k+1+l]$ to a different point.
\end{prop}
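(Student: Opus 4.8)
The plan is to build a comparison $2$-functor out of $\cO_2[k+1+l]$, check it collapses the two sub-orientals, and then verify the induced map on the quotient is an isomorphism.

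\emph{The comparison functor.} Recall from \cref{orientals} that $\cO_2[k+1+l]$ has freely generated $1$-morphisms and $2$-morphisms presented by the generators $\alpha_{ijk'}$ subject to a single relation. I would define a $2$-functor
\[
F\colon \cO_2[k+1+l]\longrightarrow \Sigma([k]\times[l]^{\op})
\]
by sending $x_i$ to $x_{\bot}$ for $0\le i\le k$ and to $x_{\top}$ for $k+1\le i\le k+1+l$; sending the generating $1$-morphism $f_{ij}$ to an identity when $i$ and $j$ lie on the same side of $k$, and to the object $(i,\,j-k-1)$ of $[k]\times[l]^{\op}=\Map_{\Sigma([k]\times[l]^{\op})}(x_{\bot},x_{\top})$ when $i\le k<j$; and extending freely over composites of $1$-morphisms. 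A short inspection of which side each of the three indices of $\alpha_{ijk'}\colon f_{ik'}\Rightarrow f_{jk'}\circ f_{ij}$ lies on shows that $F(f_{ik'})$ and $F(f_{jk'})\circ F(f_{ij})$ are always comparable objects of the poset $[k]\times[l]^{\op}$; since a poset has at most one $2$-morphism between two objects, there is a forced choice for $F(\alpha_{ijk'})$, and for the same reason the oriental relation of \cref{orientals} holds automatically, its two sides being parallel $2$-morphisms in a poset. This defines $F$, and by construction $F$ is constant at $x_{\bot}$ on the full sub-$2$-category $\cO_2[\{0,\dots,k\}]$ and constant at $x_{\top}$ on $\cO_2[\{k+1,\dots,k+1+l\}]$, hence compatible with the two collapses, so it factors uniquely through the quotient $Q:={}_{\cO_2[k]}\backslash\cO_2[k+1+l]/_{\cO_2[l]}$ as $\bar F\colon Q\to\Sigma([k]\times[l]^{\op})$.

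\emph{Reduction to hom-categories.} Since $\{x_0,\dots,x_k\}$ and $\{x_{k+1},\dots,x_{k+1+l}\}$ partition the objects of $\cO_2[k+1+l]$, the quotient $Q$ has exactly the two objects $x_\bot,x_\top$ and $\bar F$ is bijective on objects, so it remains to show $\bar F$ is an isomorphism on each hom-category. As $\cO_2[k+1+l]$ has no morphisms from a higher-indexed object to a lower-indexed one, no composite in $Q$ can travel from $x_{\top}$ back to $x_{\bot}$, and one reads off from the pushout presentation that $\Map_Q(x_{\top},x_{\bot})=\varnothing$ while $\Map_Q(x_{\bot},x_{\bot})$ and $\Map_Q(x_{\top},x_{\top})$ contain only identity $1$- and $2$-morphisms (a nonidentity $2$-morphism there would come from one in a collapsed sub-oriental, hence be trivial); on these $\bar F$ is evidently an isomorphism.

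\emph{The main hom-category.} The content is the identification $\Map_Q(x_{\bot},x_{\top})\cong[k]\times[l]^{\op}$. Every $1$-morphism $x_{\bot}\to x_{\top}$ of $Q$ reduces to the image of a single crossing generator $f_{ij}$ with $i\le k<j$, because the parts of any representing composite that run inside $\cO_2[\{0,\dots,k\}]$ or $\cO_2[\{k+1,\dots,k+1+l\}]$ become identities; moreover these images are pairwise distinct since $\bar F$ already separates them. Feeding the presentation of $\cO_2[k+1+l]$ from \cref{orientals}, together with the relations imposed by the two collapses, one then identifies $\Map_Q(x_{\bot},x_{\top})$ with $[k]\times[l]^{\op}$: the generating $2$-morphisms are the images of the $\alpha_{ijk'}$ having exactly one index on one side of $k$, which form the two families realizing the covering relations of the grid poset (raising the first coordinate, or lowering the second), and the instances of the oriental relation whose four indices straddle the two sides yield precisely the commuting-square relations presenting $[k]\times[l]^{\op}$. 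Under the bijection $f_{ij}\mapsto(i,j-k-1)$ this matches $\bar F$, so $\bar F$ is fully faithful, hence an isomorphism. (Alternatively, since $\mathfrak C\colon\sset\to\scat$ is a left adjoint and $\cO_2[m]\cong\mathfrak C[\Delta[m]]$ in $\scat$ by \cref{RmkMathfrakC}, $Q$ is the $2$-category whose hom-categories are the fundamental categories of the mapping spaces of $\mathfrak C[W]$, with $W$ the simplicial set obtained from $\Delta[k+1+l]$ by collapsing $\Delta[\{0,\dots,k\}]$ and $\Delta[\{k+1,\dots,k+1+l\}]$; the cube description of these mapping spaces again gives $[k]\times[l]^{\op}$.)

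\emph{Naturality, degenerate cases, and the obstacle.} Naturality in $[k]$ and $[l]$ is immediate, as $F$, the two collapses, and the formula $f_{ij}\mapsto(i,j-k-1)$ are all manifestly compatible with order-preserving maps $[k]\to[k']$ and $[l]\to[l']$. When $k=-1$ or $l=-1$, collapsing $\cO_2[-1]=\varnothing$ to a point merely adjoins a fresh object, and both sides reduce to the $2$-category with two objects and no nonidentity morphisms. The main obstacle is the hom-category computation of the previous paragraph: pushouts in $2\cat$ are delicate, and the real work is to check that collapsing the two sub-orientals neither identifies distinct crossing $1$-morphisms (handled cleanly via $\bar F$) nor introduces $2$-morphisms beyond those of the grid poset — which is exactly where the initial/final placement of $\cO_2[\{0,\dots,k\}]$ and $\cO_2[\{k+1,\dots,k+1+l\}]$ inside $\cO_2[k+1+l]$, together with the single oriental relation, is used.
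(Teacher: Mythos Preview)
Your approach is essentially the paper's: define a $2$-functor $\cO_2[k+1+l]\to\Sigma([k]\times[l]^{\op})$ on generators, use poset-enrichment of the target to get well-definedness for free, factor through the quotient, and check bijectivity on objects, $1$-morphisms, and $2$-morphisms. The one place you diverge is the verification on $2$-morphisms. You argue via the presentation by generators $\alpha_{ijk'}$ and oriental relations, which is the step you yourself flag as delicate; the paper instead exploits the subset description of \cref{RmkMathfrakC} directly. There, each $2$-morphism of $\cO_2[k+1+l]$ is an inclusion of subsets of $\{0,\dots,k+1+l\}$ containing the endpoints, and after the two collapses the non-identity $2$-morphisms from $x_\bot$ to $x_\top$ are represented uniquely by inclusions between sets of the form $\{0,\dots,i_1,i_2,\dots,k+1+l\}$ with $i_1\le k<i_2$, which are in visible bijection with the $1$-morphisms of $[k]\times[l]^{\op}$. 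This sidesteps entirely the question of whether the pushout introduces spurious $2$-morphisms, and is in effect a concrete unpacking of your parenthetical alternative via $\mathfrak C$ being a left adjoint---so if you want to tighten your write-up, that parenthetical is the thread to pull.
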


\begin{proof}
We define a $2$-functor
\[\varphi\colon\cO_2[k+1+l] \to \Sigma([k]\times [l]^{\op})\]
that is natural in $k$ and $l$ using the description of orientals in terms of objects, generating $1$- and $2$-morphisms as discussed in \cref{orientals} and the description of $1$- and $2$-morphisms of $\Sigma([k]\times [l]^{\op})$ as objects and $1$-morphisms of $[k]\times [l]^{\op}$.
\begin{enumerate}[leftmargin=*]
    \item[(a)] On objects, we set for any $0\le i\le k+1+l$
    \[\varphi(x_i):=\left\{
    \begin{array}{ll}
        x_{\bot} &  \text{if $0\le i\le k$}\\
        x_{\top} & \text{if $k+1\le i$.}
    \end{array}\right.\]
    \item[(b)]  On generating $1$-morphisms, we set for any $0\le i<j\le k+1+l$
\[
\varphi(f_{ij}):=\left\{
\begin{array}{ll}
\id_{x_{\bot}} &\mbox{ if } 0\leq i<j \leq k,\\
(i,j-k-1) &\mbox{ if } 0\leq i\leq k < j,\\
\id_{x_{\top}} &\mbox{ if } k<i<j.
\end{array}
\right.
\]
    \item[(c)]
On generating $2$-morphisms, we set for any $0\le i<j<s\le k+1+l$
\[
\varphi(\alpha_{ijs}):=\left\{ \begin{array}{ll}
\id_{\id_{x_{\bot}}}& \mbox{ if } 0\leq i<j<s \leq k,\\
(i,s-k-1)<(j,s-k-1)& \mbox{ if } 0\leq i<j \leq k<s,\\
(i,s-k-1)<(i,j-k-1)& \mbox{ if } 0\leq i \leq k<j<s,\\
\id_{\id_{x_{\top}}}& \mbox{ if } k<i<j<s.
\end{array}
\right.
\]
\end{enumerate}
To see that $\varphi$ is well-defined on $2$-morphisms and functorial, it is enough to observe that $\Sigma([k]\times[l]^{\op})$ is a category enriched in posets, and any two $2$-morphisms with the same source and target must coincide. By inspection, the $2$-functor $\varphi$ is also natural in both $k$ and $l$.

The $2$-functor $\varphi$ induces a $2$-functor
\[\widetilde{\varphi}\colon{}_{\cO_2[k]}\backslash\cO_2[k+1+l]/_{\cO_2[l]}\to \Sigma([k]\times [l]^{\op}),\]
and we argue that it is the desired isomorphism of $2$-categories.
\begin{enumerate}[leftmargin=*]
    \item[(0)] The $2$-functor $\widetilde{\varphi}$ is bijective on objects by construction.
    \item The $2$-functor $\widetilde{\varphi}$ is bijective on $1$-morphisms. Indeed, a careful inspection shows that the non-identity $1$-morphisms of ${}_{\cO_2[k]}\backslash\cO_2[k+1+l]/_{\cO_2[l]}$ are represented uniquely by $f_{i_1i_2}$ for $i_1\leq k <i_2$, and essentially by definition the $1$-morphisms of $\Sigma([k]\times[l]^{\op})$ are uniquely described as $(i_1,i_2-k-1)$ for $i_1\leq k <i_2$.
     \item The $2$-functor $\widetilde{\varphi}$ is bijective on $2$-morphisms.
     To see this, recall from \cref{RmkMathfrakC} 
     that each $2$-morphism of $\cO_2[k+1+l]$ from $0$ to $k+1+l$ is uniquely represented as a $1$-morphism of the poset $\cP(\{0, 1,\dots,k+l, k+1+l\})$ between subsets containing $0$ and $k+1+l$. Following this viewpoint, each non-identity $2$-morphism of ${}_{\cO_2[k]}\backslash\cO_2[k+1+l]/_{\cO_2[l]}$ from $x_{\bot}$ to $x_{\top}$ is uniquely represented as a $1$-morphism of the poset $\cP(\{0, 1,\dots,k+l, k+1+l\})$ 
     of the form
     \[
     \begin{tikzcd}
     \big\{0,1, \ldots , i_1-1, i_1, i_2, i_2+1,\ldots k+l, k+1+l\}\arrow[d,hook] \\
     \{0,1, \ldots , i_1'-1, i_1', i_2', i_2'+1,\ldots,k+l,k+1+l\big\}
     \end{tikzcd}
     \]
     with $i_1\leq k< i_2$ and $i_1'\leq k <i_2'$. In particular, $i_1 \leq i_1'$ and $i_2'\leq i_2$. By inspection, such $2$-morphism is sent by $\widetilde{\varphi}$ to the $2$-morphism of $\Sigma([k]\times[l]^{\op})$
represented by the $1$-morphism of $[k]\times[l]^{\op}$
     \[
     \begin{tikzcd}
     (i_1, i_2-k-1)\arrow[d] \\
     (i_1', i_2'-k-1)
     \end{tikzcd}
     \]
which is the generic $2$-morphism in $\Sigma([k]\times[l]^{\op})$ from $x_{\bot}$ to $x_{\top}$.
     \qedhere
\end{enumerate}
\end{proof}
 
We recall the join of simplicial sets with marking, which extends the ordinary join for simplicial sets.\footnote{The unmarked version of the join construction appears in \cite{EhlersPorterJoin}, \cite[\textsection 3]{JoyalVolumeII}, \cite[\textsection 1.2.8]{htt} and \cite[\textsection 2.4]{RiehlVerity2Cat}.  The marked version is in \cite[Obs.~34]{VerityComplicialI} or \cite[Def.~3.2.5]{EmilyNotes}.
 }

 \begin{defn}\label{DefJoin}
The \emph{join} $X\star X'$ of simplicial sets with marking is the simplicial set defined as follows.
\begin{itemize}[leftmargin=*]
    \item The set of $m$-simplices is given by
\[(X\star X')_{m}=\coprod_{k+l=m-1, k,l\geq -1}X_{k}\times X'_{l}\]
where both $X_{-1}$ and $X'_{-1}$ are singletons by definition.
\item The faces and degeneracies of a simplex $(\sigma, \sigma')\in X_k\times X'_{l}\subset (X\star X')_m$ are given by
\[
d_i(\sigma, \sigma')=\left\{\begin{array}{ll}
(d_i\sigma, \sigma') &\mbox{ if }0\leq i\leq k,\\
(\sigma, d_{i-k-1}\sigma') &\mbox{ if }k+1\leq i \leq m=k+1+l,
\end{array}
\right.
\]
and 
\[
s_i(\sigma, \sigma')=
\left\{\begin{array}{ll}
(s_i\sigma, \sigma') & \mbox{ if }0\leq i\leq k,\\
(\sigma, s_{i-k-1}\sigma')& \mbox{ if }k+1\leq i \leq m=k+1+l.
\end{array}
\right.
\]
\item A simplex $(\sigma, \sigma')$ is marked if either $\sigma$ is marked in $X$ or $\sigma'$ is marked in $X'$ (or both).
\end{itemize}
 \end{defn}

 \begin{prop}
\label{conehomotopical}
Regarding $X\star\Delta[0]$ as pointed on the $0$-simplex $x_{\top}$ coming from $\Delta[0]$, the marked join with a $0$-simplex defines a functor
 \[(-)\star \Delta[0]\colon m\sset \to m\sset_{*}\]
that is a left Quillen functor when $m\sset$ is endowed with the model structure for $(\infty,n)$-categories and $m\sset_{*}$ is endowed with the pointed model structure for $(\infty,n)$-categories.
In particular, it is homotopical.
\end{prop}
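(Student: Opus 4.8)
The plan is to verify the three conditions of \cref{lemmaleftquillen}: that $(-)\star\Delta[0]$ is a left adjoint, preserves cofibrations, and carries each elementary anodyne extension of \cref{anodynemaps} to an $(\infty,n)$-weak equivalence. For the first, I would check that $(-)\star\Delta[0]\colon m\sset\to m\sset_{*}$ preserves all small colimits — a direct computation at the level of simplices from \cref{DefJoin}, using that colimits and markings in $m\sset$ and in the coslice $m\sset_{*}$ are computed componentwise and by marked representatives (the extra singleton $X_{-1}$ contributes exactly the basepoint $x_{\top}$, which is identified under coproducts in $m\sset_{*}$) — and then invoke that $m\sset$ is a presheaf category and $m\sset_{*}$ a coslice of one, hence both locally presentable; alternatively one exhibits the marked slice as the right adjoint, the marked analogue of \cite[1.2.9]{htt}. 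Preservation of cofibrations is immediate: on underlying simplicial sets $(-)\star\Delta^{0}$ is, in each degree, the disjoint union of an identity with copies of the given map, so it preserves monomorphisms, and cofibrations in $m\sset_{*}$ are monomorphisms on underlying simplicial sets by \cref{pointedmodelstructure}. Finally, since weak equivalences in the pointed model structure are created by the forgetful functor $m\sset_{*}\to m\sset$, it suffices to show that the \emph{unpointed} functor $(-)\star\Delta[0]\colon m\sset\to m\sset$ sends elementary anodyne extensions to $(\infty,n)$-weak equivalences; I would treat the four families separately.

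Two families are easy. For a saturation extension, associativity of the marked join gives $(\Delta[3]_{eq}\star\Delta[l])\star\Delta[0]\cong\Delta[3]_{eq}\star(\Delta[l]\star\Delta[0])=\Delta[3]_{eq}\star\Delta[l+1]$ and likewise for $\Delta[3]^{\sharp}$, so $(-)\star\Delta[0]$ of this map is again a saturation elementary anodyne extension (for the index $l+1\geq -1$), in particular an acyclic cofibration — this is exactly the point of the convention chosen in \cref{anodynemaps}. For a triviality extension $\Delta[l]\to\Delta[l]_{t}$ with $l>n$, one computes $\Delta[l]\star\Delta[0]=\Delta[l+1]$ while $\Delta[l]_{t}\star\Delta[0]$ is $\Delta[l+1]$ with the two non-degenerate simplices $\{0,\dots,l\}$ and $\{0,\dots,l+1\}$ marked; both have dimension $>n$, so marking them is realized by two successive pushouts, of $\Delta[l]\to\Delta[l]_{t}$ and of $\Delta[l+1]\to\Delta[l+1]_{t}$, hence the composite is an acyclic cofibration.

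The substance is the complicial horn and thinness extensions. For a horn extension $\Lambda^{k}[m]\to\Delta^{k}[m]$ (with $m\geq 1$, $0\leq k\leq m$), a direct computation from \cref{DefJoin} identifies the underlying simplicial set of $\Delta^{k}[m]\star\Delta[0]$ with $\Delta[m+1]$ and that of $\Lambda^{k}[m]\star\Delta[0]$ with the subcomplex of $\Delta[m+1]$ obtained by removing the four simplices $\{0,\dots,m\}\setminus\{k\}$, $\{0,\dots,m\}$, $\{0,\dots,m{+}1\}\setminus\{k\}$, $\{0,\dots,m{+}1\}$. The key move is to factor $(\Lambda^{k}[m]\to\Delta^{k}[m])\star\Delta[0]$ as
\[
\Lambda^{k}[m]\star\Delta[0]\;\hookrightarrow\; A'\;\hookrightarrow\;\Delta^{k}[m]\star\Delta[0],
\qquad
A':=(\Lambda^{k}[m]\star\Delta[0])\cup_{\Lambda^{k}[m]}\Delta^{k}[m],
\]
where $A'$ is the pushout along the canonical inclusion $\Lambda^{k}[m]\hookrightarrow\Lambda^{k}[m]\star\Delta[0]$. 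By construction the first map is a pushout of the elementary anodyne $\Lambda^{k}[m]\to\Delta^{k}[m]$; the underlying simplicial set of $A'$ is the horn $\Lambda^{k}[m+1]$, and a marking computation shows that the second map is a pushout of the $(m+1)$-dimensional elementary anodyne $\Lambda^{k}[m+1]\to\Delta^{k}[m+1]$. Both are acyclic cofibrations, hence so is the composite. An entirely parallel argument handles a thinness extension $\Delta^{k}[m]'\to\Delta^{k}[m]''$: joining with $\Delta[0]$ gives the identity-on-underlying map that additionally marks the two simplices $\{0,\dots,m\}\setminus\{k\}$ and $\{0,\dots,m{+}1\}\setminus\{k\}$ of $\Delta[m+1]$, and this is realized as a pushout of $\Delta^{k}[m]'\to\Delta^{k}[m]''$ followed by a pushout of $\Delta^{k}[m+1]'\to\Delta^{k}[m+1]''$.

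I expect the main obstacle to be the bookkeeping of markings in the last step, above all for the outer horns. For $k<m$ one has $\Delta^{k}[m]\star\Delta[0]=\Delta^{k}[m+1]$ on the nose, but for $k=m$ the marked simplicial set $\Delta^{m}[m]\star\Delta[0]$ is \emph{not} a standard complicial simplex — its non-degenerate marked simplices are exactly those containing the edge $\{m-1,m\}$ — so one must check by hand that $A'$ (which is the horn $\Lambda^{m}[m+1]$ with this non-standard marking) admits a marking-preserving map from $\Lambda^{m}[m+1]$ and that pushing out $\Lambda^{m}[m+1]\to\Delta^{m}[m+1]$ along it produces precisely $\Delta^{m}[m]\star\Delta[0]$; the analogous verification is needed for the thinness extensions. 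Once these combinatorial checks are in place, \cref{lemmaleftquillen} yields that $(-)\star\Delta[0]$ is left Quillen, and it is then homotopical because every object of $m\sset$ is cofibrant, so a left Quillen functor out of $m\sset$ preserves all weak equivalences.
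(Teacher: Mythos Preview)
Your argument is correct and follows the same overall strategy as the paper: verify the hypotheses of \cref{lemmaleftquillen} by treating each family of elementary anodyne extensions separately. The handling of the saturation and triviality extensions is essentially identical to the paper's. The difference lies in the complicial horn and thinness extensions: the paper simply cites \cite[Lemma~39]{VerityComplicialI} for both, whereas you supply a direct two-step filtration, writing $(\Lambda^{k}[m]\to\Delta^{k}[m])\star\Delta[0]$ as a pushout of $\Lambda^{k}[m]\to\Delta^{k}[m]$ followed by a pushout of $\Lambda^{k}[m{+}1]\to\Delta^{k}[m{+}1]$, and analogously for thinness. Your marking computations are correct, including the delicate outer case $k=m$ you flag: there $A'$ carries the marking ``contains $\{m{-}1,m\}$'', which strictly refines the standard $\Lambda^{m}[m{+}1]$-marking ``contains $\{m{-}1,m,m{+}1\}$'', and the pushout of $\Lambda^{m}[m{+}1]\to\Delta^{m}[m{+}1]$ along the resulting entire map does recover $\Delta^{m}[m]\star\Delta[0]$ on the nose. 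One small sharpening: for $k<m$ (and also $k=0$) you actually get $A'=\Lambda^{k}[m{+}1]$ and $\Delta^{k}[m]\star\Delta[0]=\Delta^{k}[m{+}1]$ as marked simplicial sets, so the second step \emph{is} the elementary horn extension rather than merely a pushout of it. The trade-off is clear: the paper's citation is shorter, while your argument is self-contained and makes the mechanism visible.
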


\begin{proof}
The fact that the marked join with a point $(-)\star \Delta[0]\colon m\sset \to m\sset_{*}$ defines a left adjoint functor is addressed in \cite[Def.~33]{VerityComplicialI}.
By \cref{lemmaleftquillen}, in order to prove that it is left Quillen we only need to show it respects cofibrations and it sends all types of elementary $(\infty,n)$-anodyne extensions to $(\infty,n)$-weak equivalences.
\begin{enumerate}[leftmargin=*]
    \item[(0)] The functor $(-)\star \Delta[0]$ takes cofibrations to cofibrations, as it can be seen with a routine verification using the explicit description of simplices in the suspension.
\item The functor $(-)\star \Delta[0]$ takes any complicial horn extension to an $(\infty,n)$-weak equivalence, as shown in \cite[Lemma 39]{VerityComplicialI}.
\item The functor $(-)\star \Delta[0]$ takes any complicial thinness extension to an $(\infty,n)$-weak equivalence, as shown in \cite[Lemma 39]{VerityComplicialI}.

\item The functor $(-)\star \Delta[0]$ takes each saturation extension to a saturation anodyne extension, using the isomorphism $\Delta[l]\star\Delta[0]\cong\Delta[l+1]$. Indeed, this is a consequence of \cite[Rmk 1.20]{or}, discussed in more detail in \cite[App.~D]{RiehlVerityBook}.
\item The functor $(-)\star \Delta[0]$ takes each triviality extension to an $(\infty,n)$-weak equivalence. To see this, consider a triviality anodyne extension $\Delta[m]\to\Delta[m]_t$ for $m>n$. The map $\Delta[m]\star \Delta[0]\to \Delta[m]_t\star \Delta[0]$ is then an identity on the underlying simplicial sets, with marking only differing in dimensions $m,m+1>n$. In particular, the map of simplicial set with marking can be seen as a pushout along a certain coproduct of triviality extensions $\Delta[p]\to\Delta[p]_t$ for $p>n$, and is in particular an $(\infty,n)$-weak equivalence. \qedhere
\end{enumerate}
\end{proof}

We now define the suspension\footnote{A suspension for simplicial sets (without marking) due to Kan appears in \cite{KanSemisimplicialSpectra,KanWhiteheadJoin}, and is also mentioned in  \cite[\textsection III.5]{GoerssJardine}. We refer the reader to \cite{StephanThesis} for a survey on classical simplicial suspension constructions.} of simplicial sets with marking.
 We denote by $\Delta[-1]$ the empty simplicial set.
 
 \begin{defn}
 The \emph{suspension} $\Sigma X$ of a simplicial set with marking $X$ is the simplicial set with marking defined by the pushout of simplicial sets with marking
 \[
 \begin{tikzcd}
X\star\Delta[-1]\ar[r]\ar[d]&\Delta[0]\star\Delta[-1]\ar[d]\\
X\star\Delta[0]\ar[r]&\Sigma X.
 \end{tikzcd}\]
Equivalently, $\Sigma X$ can be understood as the quotient
\[\Sigma X
\cong (X\star\Delta[0])/_X\]
of $X\star\Delta[0]$ modulo $X\star\Delta[-1]\cong X$.
In particular,
\begin{itemize}[leftmargin=*]
    \item there are two $0$-simplices, one represented by any $0$-simplex of $X$ and one represented by the $0$-simplex of $\Delta[0]$, which we call $x_{\bot}$ and $x_{\top}$ respectively.
    \item the set of $m$-simplices for $m>0$ is given by all $k$-simplices of $X$ for $0\le k\le m-1$ as well as the $m$-fold degeneracies of the two $0$-simplices $x_{\bot}$ and $x_{\top}$, namely
    \[(\Sigma X)_m\cong\{s_0^mx_{\bot}\}\amalg X_{m-1} \amalg \ldots \amalg X_0 \amalg \{s_0^mx_{\top}\}.\]
   \item the set of non-degenerate $m$-simplices for $m>0$ is given by the non-degenerate $(m-1)$-simplices of $X$.
\item  a non-degenerate $m$-simplex $\sigma$ is marked in $\Sigma X$ if and only if it is marked as an $(m-1)$-simplex of $X$.
\end{itemize} 
\end{defn}

\begin{lem}
\label{suspensionhomotopical}
Regarding $\Sigma X$ as a simplicial set with marking bipointed on $x_{\bot}$ and $x_{\top}$, the marked suspension defines a functor
\[\Sigma\colon m\sset\to m\sset_{*,*}\]
that is a left Quillen functor between the model structure for $(\infty,n)$-categories and the model structure for bipointed $(\infty,n+1)$-categories.
In particular, it is homotopical and it respects connected colimits as a functor $\Sigma\colon m\sset\to m\sset$.
\end{lem}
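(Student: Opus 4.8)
The plan is to apply \cref{lemmaleftquillen}, taking $\cM$ to be $m\sset_{*,*}$ with the model structure for bi-pointed $(\infty,n+1)$-categories from \cref{pointedmodelstructure}. This requires three things: that $\Sigma$ is a left adjoint, that it preserves cofibrations, and that it sends each of the four families of elementary $(\infty,n)$-anodyne extensions listed in \cref{anodynemaps} to a weak equivalence of $m\sset_{*,*}$; and since cofibrations and weak equivalences in $m\sset_{*,*}$ are created by the forgetful functor to $m\sset$, the last two conditions may be verified on underlying marked simplicial sets. The formal parts are immediate: the explicit description of the simplices and marking of $\Sigma X$ in terms of those of $X$ shows that $\Sigma$ preserves monomorphisms and all colimits, so, $m\sset$ and $m\sset_{*,*}$ being locally presentable, $\Sigma$ is a left adjoint. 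The final clause of the statement is then automatic: $\Sigma\colon m\sset\to m\sset_{*,*}$ preserves all colimits and the forgetful functor $m\sset_{*,*}\to m\sset$, as the projection of a coslice category, preserves connected colimits, so their composite $\Sigma\colon m\sset\to m\sset$ preserves connected colimits.

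The core of the argument is the following principle: \emph{if $i\colon A\to B$ is an $(\infty,n+1)$-acyclic cofibration, then so is $\Sigma i$.} Writing $B/_A$ for the quotient collapsing the subobject $A\subseteq B$ to a point, and using the description $\Sigma Z\cong(Z\star\Delta[0])/_Z$, one factors $\Sigma i$ as
\[
\Sigma A\cong(A\star\Delta[0])/_A\ \longrightarrow\ (B\star\Delta[0])/_A\ \longrightarrow\ (B\star\Delta[0])/_B\cong\Sigma B.
\]
The first arrow is a cobase change of $i\star\Delta[0]$; by \cref{conehomotopical} (with $n+1$ in place of $n$) the functor $(-)\star\Delta[0]$ is left Quillen for the $(\infty,n+1)$-model structures, so $i\star\Delta[0]$ is an acyclic cofibration of marked simplicial sets, and acyclic cofibrations are stable under cobase change. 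The second arrow is a cobase change of the terminal map $B/_A\to\Delta[0]$ along the monomorphism $B/_A\hookrightarrow(B\star\Delta[0])/_A$ (a monomorphism because $B\hookrightarrow B\star\Delta[0]$ is one); since $\Delta[0]\to B/_A$ is itself a cobase change of $i$ and hence an acyclic cofibration, $B/_A\to\Delta[0]$ is a weak equivalence, so by left properness of the $(\infty,n+1)$-model structure the second arrow is a weak equivalence. Hence $\Sigma i$ is a weak equivalence, and being a monomorphism it is an acyclic cofibration.

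It then remains to feed in the elementary $(\infty,n)$-anodyne extensions. The complicial horn extensions, the thinness extensions, and the saturation extensions do not involve $n$ and are therefore $(\infty,n+1)$-anodyne extensions as well, hence $(\infty,n+1)$-acyclic cofibrations, so the principle above applies to them directly. For a triviality extension $\Delta[l]\to\Delta[l]_t$ with $l>n$ I would instead argue by hand: passing to $\Sigma$ transports the marking of the top $l$-simplex of $\Delta[l]_t$ to the marking of the unique non-degenerate $(l+1)$-simplex of $\Sigma\Delta[l]$ and changes nothing else — in particular the marked $l$-dimensional face $[0,\dots,l]$ of $\Delta[l]_t$, which does get marked in the \emph{join} $\Delta[l]_t\star\Delta[0]$, becomes a degenerate simplex after the suspension and so contributes no new marking. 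Consequently $\Sigma(\Delta[l]\to\Delta[l]_t)$ is the cobase change of the triviality extension $\Delta[l+1]\to\Delta[l+1]_t$ along the quotient map $\Delta[l+1]\cong\Delta[l]\star\Delta[0]\twoheadrightarrow\Sigma\Delta[l]$; since $l+1>n+1$, this extension is $(\infty,n+1)$-anodyne, and its cobase change is an acyclic cofibration. With all four families handled, \cref{lemmaleftquillen} gives that $\Sigma$ is left Quillen, and in particular homotopical.

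I expect the triviality case to be the step demanding the most care: one must carry out the marking bookkeeping that identifies $\Sigma$ of a triviality extension with the displayed cobase change, and observe that the dimension shift built into the suspension carries the condition $l>n$ into the condition $l+1>n+1$ under which the target model structure again sanctions it — this is the combinatorial reflection of the fact that suspension raises categorical dimension by one, and is really the only place where the passage from $n$ to $n+1$ does genuine work. The auxiliary claims in the general principle — that $B/_A\hookrightarrow(B\star\Delta[0])/_A$ is a monomorphism and that $B/_A$ is weakly contractible — are routine and should simply be recorded.
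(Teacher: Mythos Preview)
Your proposal is correct and follows the same overall strategy as the paper: first show that $\Sigma$ sends $(\infty,n+1)$-acyclic cofibrations to $(\infty,n+1)$-acyclic cofibrations, and then treat the triviality extension $\Delta[l]\to\Delta[l]_t$ separately by identifying $\Sigma$ of it with a pushout of $\Delta[l+1]\to\Delta[l+1]_t$. The technical packaging differs in two places. For the left adjoint, the paper constructs the right adjoint $\hom_Z(a,b)$ explicitly as a pullback involving the right adjoint $P^{\triangleright}$ of $(-)\star\Delta[0]$, whereas you invoke colimit preservation and the adjoint functor theorem; the explicit description has the advantage that no verification of colimit preservation is needed (your appeal to ``the explicit description of simplices'' for this is a little thin, though it can certainly be made to work). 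For the key step, the paper applies the gluing lemma directly to the defining span $\Delta[0]\leftarrow X\hookrightarrow X\star\Delta[0]$ to deduce that $\Sigma$ preserves all $(\infty,n+1)$-weak equivalences, while you instead factor $\Sigma i$ through $(B\star\Delta[0])/_A$ and argue with two cobase changes and left properness; both arguments rest on the same ingredients (\cref{conehomotopical} and left properness) and are equally valid. One small efficiency in the paper's version: having established that $\Sigma$ is left Quillen for the $(\infty,n+1)$ source, only the single extra triviality extension $\Delta[n+1]\to\Delta[n+1]_t$ needs to be checked by hand, rather than all $l>n$.
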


\begin{proof}
The fact that the suspension $\Sigma$ defines a functor is a straightforward verification, and we now describe its right adjoint functor, which we denote
$\hom\colon m\sset_{*,*}\to m\sset$, in terms of the right adjoint of $(-)\star\Delta[0]$ from \cref{conehomotopical}, which we denote $P^{\triangleright}\colon m\sset_{*}\to m\sset$.

On objects, the right adjoint is given by $(Z,a,b)\mapsto\hom_Z(a,b)$, where $\hom_Z(a,b)$ is defined by the pullback of simplicial sets with marking
 \[
 \begin{tikzcd}
\hom_Z(a,b)\ar[r]\ar[d]&P^{\triangleright}_bZ\ar[d]\\
\Delta[0]\ar[r,"a" swap]&Z,
\end{tikzcd}
\]
and the construction extends to a functor.
To see that this functor is the right adjoint to the suspension, and 
observe that a map $\Sigma X\to Z$ under $a,b$ corresponds to a commutative diagram simplicial sets with marking
\[
\begin{tikzcd}
X \arrow[rr]\arrow[d] && \Delta[0] \ar[d, "a"]\\
X\star\Delta[0]\ar[rr]&&Z\\
&\Delta[0]\arrow[ur, "b"swap]\arrow[ul] & 
\end{tikzcd}
\]
which corresponds to a commutative diagram of simplicial sets with marking
 \[
 \begin{tikzcd}
X\ar[r]\ar[d]&P_b^{\triangleright}Z\ar[d]\\
\Delta[0]\arrow[r, "a"swap]&Z,
\end{tikzcd}
\]
which corresponds to $Z\to \hom_X(a,b)$, as desired.

We now show that $\Sigma\colon m\sset\to m\sset_{*,*}$ is a left Quillen functor between the model structure for $(\infty,n+1)$-categories and the model structure for bipointed $(\infty,n+1)$-categories.
 \begin{itemize}[leftmargin=*]
     \item The functor $\Sigma$ respects cofibrations, as it can be seen with a routine verification using the explicit description of simplices in the suspension.
\item The functor $\Sigma$ respects $(\infty,n+1)$-weak equivalences. To this end, suppose that $f\colon X \to Y$ is a weak equivalence of marked simplicial sets, and consider the commutative diagram
\[
\begin{tikzcd}
\Delta[0] \ar[d, "="] & X\cong X\star \Delta[-1] \ar[l] \ar[r, hook] \ar[d,"{f\star\Delta[-1]}"] \arrow[d, shift right=1.1cm, "f", swap] & X\star \Delta[0]\ar[d,"{f\star\Delta[0]}"]\\
\Delta[0] & Y\cong Y\star \Delta[-1] \ar[l] \ar[r, hook] & Y\star \Delta[0].
\end{tikzcd}
\]
We observe that all vertical arrows are weak equivalences (the first is an identity, the second is a weak equivalence by assumption, and the third is a weak equivalence as a consequence of \cref{conehomotopical}).
Since the model structure for $(\infty,n+1)$-categories is left proper and the right horizontal arrows can be seen to be cofibrations by direct inspection, we can apply the gluing lemma (obtained combining the dual of \cite[Cor.\ 13.3.8, Prop.\ 13.3.4]{Hirschhorn}) to conclude that the map induced on the pushout diagrams 
\[
\begin{tikzcd}
\Sigma X\ar[d,"{\Sigma f}"]\\
\Sigma Y
\end{tikzcd}
\]
is an $(\infty,n+1)$-weak equivalence.
\end{itemize}

We now show that $\Sigma\colon m\sset\to m\sset_{*,*}$ is a left Quillen functor between the model structure for $(\infty,n)$-categories and the model structure for bipointed $(\infty,n+1)$-categories. Thanks to \cref{lemmaleftquillen} and previous considerations, it is enough to show that $\Sigma \Delta[n+1]  \to \Sigma \Delta[n+1]_t$ is a weak equivalence in the model structure for $(\infty,n+1)$-categories. This map is an isomorphism on the underlying simplicial sets (both isomorphic to $\Delta[n+2]/\Delta[n+1]$), and the only difference in marking is that in the right-hand side the top-dimensional $(n+2)$-simplex is marked. This means that the map $\Sigma \Delta[n+1]  \to \Sigma \Delta[n+1]_t$ is a pushout
\[\begin{tikzcd}
\Delta[n+2]\arrow[r]\arrow[d]&\Delta[n+2]_t\arrow[d]\\
\Sigma \Delta[n+1]  \arrow[r]& \Sigma \Delta[n+1]_t
\end{tikzcd}\]
of a triviality extension $\Delta[n+2]\to\Delta[n+2]_t$, and is therefore an $(\infty,n+1)$-acyclic cofibration, as desired.
 \end{proof}

 We now compare the nerve of a suspension and the suspension of a nerve.

 \begin{rmk}
 \label{comparisonnervesuspension}
 Let $\cP$ be a $1$-category. Recall from \cref{quotientoforiental} that for any $m\ge0$ there is an isomorphism of $2$-categories ${}_{\cO_2[k]}\backslash\cO_2[m+1]/_{ \cO_2[m-k]}\cong \Sigma([k]\times [m-k]^{\op})$.
 \begin{enumerate}[leftmargin=*]
     \item We have a canonical map of simplicial sets
  \[\Sigma (N\cP)\to N(\Sigma\cP),\]
 \begin{itemize}[leftmargin=*]
     \item that is identity on $0$-simplices, namely sends $x_{\bot}$ to $x_{\bot}$ and $x_{\top}$ to $x_{\top}$, and
     \item  that sends an $(m+1)$-simplex $f\colon[k] \to \cP$ with $0\leq k \leq m$ of $\Sigma(N\cP)$ for $m\ge0$ to the $(m+1)$-simplex of $N(\Sigma\cP)$ 
                \[
                \begin{tikzcd}
                \cO_2[m+1]\hphantom{\cong \Sigma([k]\times [m-k]^{\op})}\arrow[d, shift right=1.5cm, two heads] &\\[-0.3cm]
                {}_{\cO_2[k]}\backslash\cO_2[m+1]/_{ \cO_2[m-k]} \cong \Sigma([k]\times [m-k]^{\op})  \arrow[r,"{\Sigma(f\times !)}"] & \Sigma(\cP\times [0]^{\op})\cong \Sigma\cP.
                \end{tikzcd}
           \]
 \end{itemize} 
  The resulting map $\Sigma (N\cP)\to N(\Sigma\cP)$ of simplicial sets is an inclusion.
  \item The map can be enhanced to a map of simplicial sets with marking
   \[\Sigma (\NRS\cP)\to \NRS(\Sigma\cP),\]
   which is a regular inclusion.
 \end{enumerate}
 \end{rmk}

The following theorem was anticipated as \cref{ThmBIntro}, and will be proven in the next section.

 \begin{thm}
 \label{ThmB}
Let $\cP$ be a $1$-category.
\begin{enumerate}[leftmargin=*]
    \item \label{ThmBpart1}
    The canonical inclusion
  \[\Sigma (N\cP)\to N(\Sigma\cP)\]
is an inner anodyne extension, and in particular a categorical equivalence.
\item \label{ThmBpart2} The canonical inclusion
  \[\Sigma (\NRS\cP)\to \NRS(\Sigma\cP)\]
is a complicial inner anodyne extension, and in particular an $(\infty,2)$-weak equivalence.
\end{enumerate}
 \end{thm}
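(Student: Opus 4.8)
The plan is to realise both inclusions as explicit filtrations assembled from (complicial) inner horn extensions, using \cref{quotientoforiental} as a bookkeeping device for the simplices of $N(\Sigma\cP)$. The first step is to take inventory. By \cref{quotientoforiental} (together with \cref{RmkMathfrakC}), a non-degenerate $m$-simplex of $N(\Sigma\cP)$ whose vertices $0,\dots,k$ are sent to $x_{\bot}$ and whose vertices $k+1,\dots,m$ are sent to $x_{\top}$ is the same datum as a functor $g\colon[k]\times[m-1-k]^{\op}\to\cP$ which is non-degenerate in the evident sense, and the remaining simplices are just the $m$-fold degeneracies of $x_{\bot}$ and $x_{\top}$. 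One checks directly from \cref{comparisonnervesuspension} that the image of $\Sigma(N\cP)$ consists exactly of the simplices with $l:=m-1-k=0$, together with the two $0$-simplices and all degenerate simplices. Thus the non-degenerate simplices of $N(\Sigma\cP)$ that remain to be attached are those with $l\ge 1$, and each such simplex already has its whole boundary present, so it cannot be attached along a horn of its own dimension.

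To get around this, given such a simplex $\sigma$, corresponding to $g\colon[k]\times[l]^{\op}\to\cP$ with $l\ge1$, I form the $(k+l+2)$-simplex $\tau$ of threshold $k+1$ corresponding to the functor $\widehat g\colon[k+1]\times[l]^{\op}\to\cP$ that agrees with $g$ on the first $k+1$ rows, whose last row is the constant functor at the object $g(k,0)$, and whose connecting morphisms from the $k$-th to the $(k+1)$-st row are the structure maps $g(k,q)\to g(k,0)$. One checks $d_{k+1}\tau=\sigma$, and since $0<k+1<k+l+2$ the horn $\Lambda^{k+1}[k+l+2]\to\Delta^{k+1}[k+l+2]$ is inner, so a pushout along $\tau$ attaches $\sigma$ and $\tau$ simultaneously. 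The remaining faces $d_i\tau$ with $i\ne k+1$ are, by inspection: for $i\ge k+2$, a simplex with strictly smaller $l$ (or $l=0$, hence already in $\Sigma(N\cP)$); the $2$-face on the vertices $\{k,k+1,k+2\}$, and, when $k=0$, the face $d_0\tau$, which are degenerate; and for $i\le k$, a simplex of the same $l$ and same dimension but of strictly smaller ``complexity'', since its last row is constant. I would therefore attach the new simplices in order of increasing $l$, then increasing dimension, then according to a suitable secondary complexity measure on the functor $g$ (for instance the number of non-degenerate simplices of its nerve), so that at the moment $\sigma$ is attached all faces $d_i\tau$ with $i\ne k+1$ are already present. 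This exhibits $\Sigma(N\cP)\to N(\Sigma\cP)$ as a transfinite composite of pushouts of inner horn inclusions, proving part (1). (Alternatively, \cref{RmkMathfrakC} identifies $N(\Sigma\cP)$ with the homotopy coherent nerve of the simplicial category with two objects and mapping space $N\cP$, and one may quote the corresponding inner anodyne statement for that nerve.)

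For part (2) I would run the same filtration on the marked level. By \cref{comparisonnervesuspension} the map $\Sigma(\NRS\cP)\to\NRS(\Sigma\cP)$ is a regular inclusion, so the marking of an already-present simplex never changes, and the only new non-degenerate simplices are the $\sigma$ and $\tau$ above. Every simplex of $\NRS(\Sigma\cP)$ of dimension $\ge 3$ is marked, while a $2$-simplex is marked if and only if it is degenerate; hence the newly attached $\tau$ (dimension $\ge 3$) is always marked, and $\sigma$ is marked unless $k=0$ and $l=1$, which is the single case in which $\sigma$ is a non-degenerate threshold-$0$ $2$-simplex and is unmarked. When $\sigma$ must be marked I would replace the pushout of $\Lambda^{k+1}[k+l+2]\to\Delta^{k+1}[k+l+2]$ by the pushout of the complicial inner anodyne extension $\Lambda^{k+1}[k+l+2]'\to\Delta^{k+1}[k+l+2]''$ of \cref{CompMarkAtOnce}, which in addition marks the top simplex and its $(k+1)$-st face, producing exactly the marking of $\NRS(\Sigma\cP)$; in the exceptional case $k=0$, $l=1$ the plain inner complicial horn extension $\Lambda^{1}[3]\to\Delta^{1}[3]$ already produces the correct marking. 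So $\Sigma(\NRS\cP)\to\NRS(\Sigma\cP)$ is a transfinite composite of pushouts of complicial inner anodyne extensions, hence complicial inner anodyne by \cref{underlyingcomplicialinner}, hence an $(\infty,2)$-weak equivalence.

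The main obstacle is the bookkeeping for part (1): one must pin down the secondary complexity measure and verify, for every $\sigma$ and its associated $\tau$, that each face $d_i\tau$ with $i\ne k+1$ is either in $\Sigma(N\cP)$, degenerate, or strictly earlier in the chosen well-ordering, so that the horn fillings can be carried out in the prescribed order. Everything else — the inner-ness of the horns, the identity $d_{k+1}\tau=\sigma$, the identification of $\widehat g$, and the marking assertions — is a direct verification.
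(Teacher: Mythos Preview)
Your overall strategy---filter $N(\Sigma\cP)$ by attaching each missing non-degenerate simplex $\sigma$ as the missing face of an inner horn of an auxiliary one-dimension-higher simplex $\tau$---is exactly the paper's, and your inventory of the missing simplices (those with $l\ge1$) is correct. The gap is in your specific choice of $\tau$. Adding the constant row at the \emph{bottom} (position $k+1$) produces a \emph{degenerate} $\tau$ whenever the last row of $\sigma$ is itself constant, since then rows $k$ and $k+1$ of $\widehat g$ coincide and $\tau=s_k\sigma$. A horn of a degenerate simplex cannot be used to attach $\sigma$: both $d_k\tau$ and $d_{k+1}\tau$ equal $\sigma$, so $\sigma$ sits on the boundary of the horn rather than in its interior. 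Consequently no secondary complexity measure can rescue the argument as written, because for these $\sigma$ the attaching mechanism itself is unavailable. And such $\sigma$ are unavoidable: they are precisely the faces $d_i\tau$ with $0\le i\le k$ that you correctly identify as needing to be present before you fill the horn of $\tau$.

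The paper repairs this by inserting the constant row not at the bottom but at the position $r$ given by the \emph{suspect index} of $\sigma$: the least $r$ such that rows $r,\dots,k$ are already constant (with $r=k+1$ if no tail of rows is constant). One checks that inserting at position $r$ the constant row with value $\sigma(r-1,0)$ always yields a non-degenerate $\widetilde\sigma$, provided $\sigma$ is \emph{non-suspect}, meaning $\sigma(r-1,0)\neq\sigma(r,0)$. The non-degenerate simplices then split into the non-suspect ones (each attached as $d_r\widetilde\sigma$) and the suspect ones (each \emph{is} some $\widetilde\rho$ for a lower-dimensional $\rho$, so gets attached one step earlier as a filler rather than as a face). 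The filtration runs over ascending dimension, descending type, then ascending suspect index; \cref{BoundarySuspect2} is the bookkeeping lemma that plays the role of your unspecified complexity measure. Your $\tau$ agrees with the paper's $\widetilde\sigma$ exactly in the case $r=k+1$, which is why the construction looks right on first inspection but breaks on the faces $d_i\tau$ with $i\le k$, all of which have $r\le k$. Once part~(1) is fixed in this way, your treatment of the marking in part~(2) is essentially correct and matches the paper's.
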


As applications of the theorem, we obtain the following two corollaries, which were anticipated as \cref{CorBIntro} and \cref{CorCIntro}.

\begin{cor}
\label{CorC}
Let $\mathbb I$ denote the free-living isomorphism category.
\label{completeness}
\begin{enumerate}[leftmargin=*]
    \item \label{ThmBpart1}
    The canonical map of simplicial sets \[N[1]\hookrightarrow N(\Sigma\mathbb I)\] is
    categorical equivalence.
    \item \label{ThmBpart2}
    The canonical map of simplicial sets with marking
    \[\NRS[1]\hookrightarrow \NRS(\Sigma\mathbb I)\]
    is
    an $(\infty,2)$-weak equivalence.
\end{enumerate}
\end{cor}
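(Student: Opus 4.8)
The plan is to recognise both maps of the statement as the nerve, resp.\ Roberts--Street nerve, of the suspension $\Sigma\iota\colon [1]=\Sigma[0]\hookrightarrow\Sigma\mathbb I$ of the inclusion $\iota\colon[0]\hookrightarrow\mathbb I$ of an object, and then to bootstrap off \cref{ThmB} together with the fact that $[0]$ is a retract of $\mathbb I$. Since $\Sigma[0]\cong[1]$ one has $N[1]=\Sigma(N[0])$ and $\NRS[1]=\Sigma(\NRS[0])$, and using the naturality of the comparison maps of \cref{comparisonnervesuspension} in the $1$-category, applied to $\iota$, one sees that the canonical inclusions of the statement factor as
\[N[1]=\Sigma(N[0])\xrightarrow{\;\Sigma(N\iota)\;}\Sigma(N\mathbb I)\to N(\Sigma\mathbb I)\qquad\text{and}\qquad \NRS[1]=\Sigma(\NRS[0])\xrightarrow{\;\Sigma(\NRS\iota)\;}\Sigma(\NRS\mathbb I)\to \NRS(\Sigma\mathbb I),\]
where the second arrows are the comparison maps for $\cP=\mathbb I$. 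By \cref{ThmB} these are respectively an inner anodyne extension and a complicial inner anodyne extension, hence a categorical equivalence and an $(\infty,2)$-weak equivalence, so by the two-out-of-three property it suffices to prove that $\Sigma(N\iota)$ is a categorical equivalence and that $\Sigma(\NRS\iota)$ is an $(\infty,2)$-weak equivalence.

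The marked statement is then formal: $\iota$ is an equivalence of $1$-categories, so $\NRS\iota$ is an $(\infty,1)$-weak equivalence by \cref{NRSproperties}, and $\Sigma$ takes $(\infty,1)$-weak equivalences to $(\infty,2)$-weak equivalences by \cref{suspensionhomotopical}; hence $\Sigma(\NRS\iota)$ is an $(\infty,2)$-weak equivalence.

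For the unmarked statement I would argue via the retraction. Fix $r\colon\mathbb I\to[0]$ with $r\iota=\id_{[0]}$; then $\Sigma(Nr)$ retracts $\Sigma(N\iota)$, so by two-out-of-three it is enough to show that $\Sigma(Nr)\colon\Sigma(N\mathbb I)\to N[1]$ is a categorical equivalence. Again by naturality of the comparison maps, $\Sigma(Nr)$ factors as the comparison map $\Sigma(N\mathbb I)\to N(\Sigma\mathbb I)$ of \cref{ThmB} (inner anodyne, hence a categorical equivalence) followed by $N(\Sigma r)\colon N(\Sigma\mathbb I)\to N[1]$, so it remains to show that $N(\Sigma r)$ is a categorical equivalence, which I would do by verifying that it is a trivial fibration of simplicial sets. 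Since both source and target are $3$-coskeletal, it suffices to check the right lifting property against $\partial\Delta[m]\hookrightarrow\Delta[m]$ for $m\le 3$: for $m=0,1$ this is the bijectivity of $\Sigma r$ on objects and the surjectivity on objects of each functor $\Map_{\Sigma\mathbb I}(a,b)\to\Map_{[1]}(ra,rb)$; for $m=2$ it is the existence of a filling $2$-cell in $\Sigma\mathbb I$, which always exists since the nonempty hom-$1$-categories of $\Sigma\mathbb I$ are $[0]$ and the contractible groupoid $\mathbb I$; and for $m=3$ the coherence relation a $3$-simplex of the Duskin nerve must satisfy holds automatically, because every hom-$1$-category of $\Sigma\mathbb I$ is a preorder, so any two parallel $2$-cells coincide. (One checks in passing that no lifting problem calling for a $1$-morphism $x_\top\to x_\bot$ occurs, as such a square cannot commute.) I expect this final trivial-fibration check — in particular keeping the Duskin-nerve bookkeeping straight in the $m=3$ case — to be the only genuinely computational step; everything else is formal manipulation with \cref{ThmB}, \cref{NRSproperties}, and \cref{suspensionhomotopical}.
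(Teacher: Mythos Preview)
Your argument for Part~(2) is correct and is essentially the paper's own proof: factor through $\Sigma(\NRS\mathbb I)$, use that $\NRS$ is homotopical (\cref{NRSproperties}) on the equivalence $[0]\hookrightarrow\mathbb I$, use that $\Sigma$ is homotopical (\cref{suspensionhomotopical}), and finish with \cref{ThmB}.

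Your argument for Part~(1) is also correct but takes a genuinely different route from the paper. The paper simply says Part~(1) is proved the same way as Part~(2), invoking an \emph{unmarked} analogue of \cref{suspensionhomotopical} (that the unmarked suspension preserves Joyal weak equivalences, justified by adapting the proof via \cite[Lem.~2.1.2.3]{htt}). You instead avoid this unproved analogue by exploiting the retraction $r\colon\mathbb I\to[0]$: you reduce to showing $N(\Sigma r)$ is a categorical equivalence and verify directly that it is a trivial fibration, using that $\Sigma\mathbb I$ is enriched in preorders so that the Duskin-nerve lifting problems for $m\le 3$ are automatic. The paper's route is shorter and parallels Part~(2) exactly, but leans on an assertion it does not spell out; your route is more self-contained at the cost of a small explicit computation. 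Both are sound.
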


   \begin{proof}
   We prove Part (2); Part (1) is similar, observing that the unmarked version of \cref{suspensionhomotopical} also holds (by adapting the proof to the unmarked context using \cite[Lem.~2.1.2.3]{htt}).
We have an equivalence of (discrete) $2$-categories
\[[0]\hookrightarrow\mathbb I.\]
Since $\NRS$ is homotopical by \cref{NRSproperties},
we obtain an $(\infty,2)$-acyclic cofibration
\[\NRS[0]\hookrightarrow \NRS\mathbb I.\]
Since the suspension is homotopical by \cref{suspensionhomotopical}, we obtain an $(\infty,2)$-acyclic cofibration
\[\Sigma(\NRS[0])\hookrightarrow\Sigma(\NRS\mathbb I).\]
Since we can commute nerve and suspension up to equivalence by \cref{ThmB2}, we then obtain an $(\infty,2)$-acyclic cofibration
\[\NRS(\Sigma[0])\hookrightarrow \NRS(\Sigma\mathbb I),\]
as desired.
\end{proof}

\begin{cor}
\label{CorB}
\label{vertical}
Let $m\ge1$.
\begin{enumerate}[leftmargin=*]
    \item The canonical map of simplicial sets 
    \[
    \underbrace{N[1|1]\aamalg{N[1|0]}\dots\aamalg{N[1|0]}N[1|1]}_{m}\hookrightarrow N[1|m]
    \] 
    is
    a categorical equivalence.
    \item The canonical map of simplicial sets with marking \[ \underbrace{\NRS[1|1]\aamalg{\NRS[1|0]}\dots\aamalg{\NRS[1|0]}\NRS[1|1]}_{m}\hookrightarrow \NRS[1|m]\]
    is
    an $(\infty,2)$-weak equivalence.
\end{enumerate}
\end{cor}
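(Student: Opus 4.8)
The plan is to transport the statement, via \cref{ThmB} and the homotopical properties of suspension, to the classical fact that the spine of a simplex includes into it by an inner anodyne extension.

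Fix $m\ge 1$ and let $D_m$ be the diagram of posets $[1]\leftarrow[0]\rightarrow[1]\leftarrow\cdots\rightarrow[1]$ ($m$ copies of $[1]$, glued consecutively along $[0]$'s), regarded as a diagram of subcategories of $[m]$ with colimit $[m]$. Write $\mathrm{Sp}[m]:=\colim N(D_m)\subseteq N[m]=\Delta[m]$ for the spine and $\mathrm{Sp}^{RS}[m]:=\colim \NRS(D_m)\subseteq\NRS[m]$ for its marked analogue. Applying $\Sigma$ to $D_m$ produces the diagram $\Sigma D_m$ with values $[1|0]$ and $[1|1]$, and the map in the statement is exactly the canonical map
\[
\underbrace{N[1|1]\aamalg{N[1|0]}\cdots\aamalg{N[1|0]}N[1|1]}_{m}=\colim N(\Sigma D_m)\longrightarrow N(\Sigma[m])=N[1|m]
\]
induced by the inclusions $[1|k]=\Sigma[k]\hookrightarrow\Sigma[m]=[1|m]$, and likewise with $\NRS$. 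Now apply the natural comparison $\Sigma\circ N\Rightarrow N\circ\Sigma$, resp.\ $\Sigma\circ\NRS\Rightarrow\NRS\circ\Sigma$, of \cref{comparisonnervesuspension} to $D_m$. Since $\Sigma\colon\sset\to\sset$ and $\Sigma\colon m\sset\to m\sset$ preserve connected colimits by \cref{suspensionhomotopical} and its unmarked variant, naturality produces a commuting square
\[
\begin{tikzcd}
\Sigma(\mathrm{Sp}[m])\arrow[r]\arrow[d]&\Sigma(\Delta[m])\arrow[d]\\
\colim N(\Sigma D_m)\arrow[r]&N[1|m]
\end{tikzcd}
\]
and its evident analogue with $\NRS$, in which the bottom horizontal is the map in the statement, the right-hand vertical is the \cref{ThmB} comparison for $\cP=[m]$, the top horizontal is $\Sigma$ applied to the spine inclusion, and the left-hand vertical is the colimit of the objectwise comparisons of \cref{ThmB}.

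Those objectwise comparisons are inner anodyne, resp.\ complicial inner anodyne, extensions, in particular acyclic cofibrations, and all structure maps of the two diagrams whose colimits appear as the left-hand vertical are monomorphisms; hence an iterated application of the gluing lemma (using left properness, as in the proof of \cref{suspensionhomotopical}) shows that the left-hand vertical is an acyclic cofibration. The right-hand vertical is a weak equivalence by \cref{ThmB}, so by two-out-of-three the map in the statement is a weak equivalence as soon as the top horizontal is one; and since $\Sigma$ is homotopical by \cref{suspensionhomotopical} and its unmarked variant, for this it suffices that the spine inclusion $\mathrm{Sp}[m]\hookrightarrow\Delta[m]$, resp.\ the marked spine inclusion $\mathrm{Sp}^{RS}[m]\hookrightarrow\NRS[m]$, is a weak equivalence before suspending. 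For Part~(1) this is immediate, since $\mathrm{Sp}[m]\hookrightarrow\Delta[m]$ is a classical inner anodyne extension.

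For Part~(2), one first identifies $\mathrm{Sp}^{RS}[m]$ as the spine $\mathrm{Sp}[m]$ with only its degenerate simplices marked, and $\NRS[m]$ as $\Delta[m]$ with every non-degenerate simplex of dimension $\ge 2$ marked (for a poset, a $1$-morphism is an identity exactly when it is degenerate, and every higher morphism is an identity). I would then factor the marked spine inclusion as $\mathrm{Sp}^{RS}[m]\hookrightarrow(\Delta[m])^{\flat}\hookrightarrow\NRS[m]$, where $(\Delta[m])^{\flat}$ has only its degenerate simplices marked. The second map adjoins thinness only in dimensions $\ge 2$ and so is a transfinite composite of pushouts of triviality extensions $\Delta[l]\to\Delta[l]_{t}$ with $l\ge 2$, hence an $(\infty,1)$-acyclic cofibration. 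The first map has the spine inclusion as its underlying simplicial map; decomposing $\mathrm{Sp}[m]\hookrightarrow\Delta[m]$ into pushouts of inner horn inclusions $\Lambda^{k}[p]\to\Delta[p]$ ($0<k<p$) and applying the colimit-preserving minimal-marking functor, it suffices to check that each $(\Lambda^{k}[p])^{\flat}\to(\Delta[p])^{\flat}$ is an $(\infty,1)$-weak equivalence, which follows by two-out-of-three from the square comparing it with the complicial horn extension $\Lambda^{k}[p]\to\Delta^{k}[p]$: the comparison maps $(\Lambda^{k}[p])^{\flat}\to\Lambda^{k}[p]$ and $(\Delta[p])^{\flat}\to\Delta^{k}[p]$ only adjoin thin simplices of dimension $\ge 2$ and so are built from triviality extensions. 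Thus $\mathrm{Sp}^{RS}[m]\hookrightarrow\NRS[m]$ is an $(\infty,1)$-acyclic cofibration, and applying the homotopical (indeed left Quillen) functor $\Sigma$ from the model structure for $(\infty,1)$-categories to that for $(\infty,2)$-categories completes the argument. The only real work is this last bookkeeping of markings; all the rest is a formal transport of \cref{ThmB} along colimits.
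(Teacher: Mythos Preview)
Your proof is correct and follows the same overall strategy as the paper: reduce via \cref{ThmB} and the homotopical, connected-colimit-preserving properties of $\Sigma$ (\cref{suspensionhomotopical}) to the (marked) spine inclusion being a weak equivalence. The paper's write-up is more linear---apply $\Sigma$ to the spine inclusion, commute $\Sigma$ with the iterated pushout, then invoke \cref{ThmB}---while you organize the same ingredients into an explicit commuting square and a two-out-of-three argument; these are equivalent presentations of the same deduction.

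The one substantive difference is your treatment of the marked spine inclusion $\mathrm{Sp}^{RS}[m]\hookrightarrow\NRS[m]$. The paper establishes this as a complicial inner anodyne extension either by enhancing Joyal's unmarked argument to the marked context or by invoking \cref{CorA} (the wedge result) with all $k_i=0$, which is a forward reference to \cref{sec:WedgeResults}. Your route---factoring through $(\Delta[m])^{\flat}$, observing that the second map is built from triviality extensions $\Delta[l]\to\Delta[l]_t$ with $l\ge 2$, and comparing each $(\Lambda^k[p])^{\flat}\hookrightarrow(\Delta[p])^{\flat}$ to the complicial horn extension via maps that again only adjoin markings in dimension $\ge 2$---is more elementary and entirely self-contained within the complicial axiomatics. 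The cost is that you only conclude an $(\infty,1)$-acyclic cofibration rather than a complicial inner anodyne extension, but since you then apply the left Quillen functor $\Sigma$ from the $(\infty,1)$- to the $(\infty,2)$-model structure, this is exactly what is needed.
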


\begin{proof}
We prove part (2); Part (1) is similar, observing that the unmarked version of \cref{suspensionhomotopical} also holds (by adapting the proof to the unmarked context using \cite[Lem.~2.1.2.3]{htt}).

We know by \cite[Prop.2.13]{JoyalVolumeII} that the spine inclusion
  \[
 \underbrace{\Delta[1]\aamalg{\Delta[0]}\dots\aamalg{\Delta[0]}\Delta[1]}_m\hookrightarrow \Delta[m]
 \]
 is an inner anodyne extension of simplicial sets. In fact, it can be upgraded to
 a complicial inner anodyne extension
   \[
 \underbrace{\NRS[1]\aamalg{\NRS[0]}\dots\aamalg{\NRS[0]}\NRS[1]}_m\hookrightarrow \NRS[m].
 \]
 This can be seen by either enhancing the original argument to a marked context,
 or by recognizing it as an instance of \cref{CorA}, in which $k_i=0$ for all $i$.
Since the suspension is homotopical by \cref{suspensionhomotopical}, we obtain an $(\infty,2)$-acyclic cofibration
 \[
 \underbrace{\Sigma(\NRS[1]\aamalg{\NRS[0]}\dots\aamalg{\NRS[0]}\NRS[1])}_m\hookrightarrow \Sigma\NRS[m].
 \]
 Since the suspension commutes with connected colimits by \cref{suspensionhomotopical}, we obtain an $(\infty,2)$-acyclic cofibration
\[\underbrace{\Sigma\NRS[1]\aamalg{\Sigma\NRS[0]}\dots\aamalg{\Sigma\NRS[0]}\Sigma\NRS[1]}_m\hookrightarrow \Sigma\NRS[m]\]
and using \cref{ThmB2}, we obtain an $(\infty,2)$-acyclic cofibration
 \[\underbrace{\NRS\Sigma[1]\aamalg{\NRS\Sigma[0]}\dots\aamalg{\NRS\Sigma[0]}\NRS\Sigma[1]}_m\hookrightarrow \NRS\Sigma[m],\]
 as desired.
\end{proof}

  \section{Nerve vs suspension - The proofs}
  \label{sec:SuspensionProofs}

The aim of this section is to prove \cref{ThmB}. We will prove (\ref{ThmBpart2})
and obtain (\ref{ThmBpart1}) as a corollary.

In order to do a detailed analysis of $N(\Sigma\cP)$, we will use the an explicit description of the nerve of suspension $2$-categories, that involves the following simplicial set.

  \begin{lem}[{\cite[Lemma 1.3]{NerveSuspension}}]
  Let $\cP$ be a category. 
The collection of $\cP$-matrices
\[\matn m{\cP}:=\coprod_{\substack{{k,l\ge-1,}\\{k+l=m-1}}}\big\{\sigma\colon [k]\times [l]^{\op} \to\cP\big\}\]
for $m\ge 0$ defines a simplicial set $\mat{\cP}$ in which\footnote{See \cite[Lemma 1.3]{NerveSuspension} for a precise description of the simplicial structure of $\mat{\cP}$.}
 \begin{enumerate}[leftmargin=*]
    \item faces are given by removing precisely one row or one column;
    \item degeneracies are given by doubling precisely one row or one column and inserting identities;
    \item the non-degenerate simplices are the ones where no two consecutive rows and no two consecutive columns coincide.
 \end{enumerate}
 \end{lem}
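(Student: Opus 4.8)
The plan is to define the face and degeneracy operators on matrices directly, verify the simplicial identities by a somewhat tedious case analysis, and then read off the description of the non-degenerate simplices; at the end I indicate a more conceptual route that doubles as a sanity check.

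First I would fix notation: an element of $\matn{m}{\cP}$ in the summand indexed by $(k,l)$ with $k+l=m-1$ is an array $(\sigma_{ij})$ of objects of $\cP$ for $0\le i\le k$ and $0\le j\le l$, together with the functoriality data of $\sigma\colon[k]\times[l]^{\op}\to\cP$ --- ``horizontal'' morphisms $\sigma_{ij}\to\sigma_{i'j}$ for $i\le i'$ and ``vertical'' morphisms $\sigma_{ij}\to\sigma_{ij'}$ for $j\ge j'$, satisfying the evident compatibilities. For $0\le p\le m$ I would then define $d_p\colon\matn{m}{\cP}\to\matn{m-1}{\cP}$ to delete row $p$ (replacing the two horizontal morphisms into and out of row $p$ by their composites) when $0\le p\le k$, landing in the summand $(k-1,l)$, and to delete column $p-k-1$ when $k+1\le p\le m$, landing in $(k,l-1)$; dually $s_p$ duplicates row $p$, resp.\ column $p-k-1$, inserting an identity morphism between the two copies, landing in $(k+1,l)$, resp.\ $(k,l+1)$. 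Since $k+1$ of the indices act on rows and $l+1$ on columns, this accounts for exactly the $m+1$ expected operators. Throughout one must keep in mind the extreme summands $k=-1$ and $l=-1$, where $[k]\times[l]^{\op}$ is empty and $\sigma$ is the unique functor out of it; for instance, deleting the single row of a matrix in the summand $(0,l)$ produces the empty matrix in the summand $(-1,l)$.

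Next I would check the simplicial identities, splitting the verification according to whether each of the two operators involved acts on rows or on columns. The row--row and column--column cases reduce to the standard combinatorics of deleting and inserting entries in a finite chain, that is, to the simplicial identities for the nerve of a $1$-category applied in the $[k]$-direction (for row operations) and in the $[l]^{\op}$-direction (for column operations); in the mixed cases the two operations act on disjoint coordinates and therefore commute, which yields the corresponding identities immediately. The delicate point is the bookkeeping at the ``seam'' between rows and columns --- comparing, say, $d_k$ with $d_{k+1}$, or the degeneracies whose index straddles $k$ --- and in the extreme summands just mentioned. I expect this case analysis, rather than any single conceptual difficulty, to be the main labor of the argument.

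Finally, for the non-degenerate simplices: the formula for $s_p$ shows that any simplex of the form $s_p\tau$ has rows $p$ and $p+1$ (if $p\le k$) or columns $p-k-1$ and $p-k$ (otherwise) literally equal and joined by an identity morphism; conversely, if a matrix has two consecutive rows (or columns) equal in this sense, then deleting one of them and applying the corresponding degeneracy recovers it. Since in any simplicial set the degenerate simplices are precisely the images of the degeneracy operators, the non-degenerate simplices of $\mat{\cP}$ are exactly the arrays in which no two consecutive rows and no two consecutive columns coincide, as claimed. As a sanity check, one can observe that a functor $[k]\times[l]^{\op}\to\cP$ is the same as a $2$-functor $\Sigma([k]\times[l]^{\op})\to\Sigma\cP$ fixing the two objects, and that a $2$-functor $\cO_2[m]\to\Sigma\cP$ must send the vertices $x_0,\dots,x_m$ to an initial block of copies of $x_{\bot}$ followed by copies of $x_{\top}$; combined with \cref{quotientoforiental}, this identifies $\mat{\cP}$ with the Street nerve $N(\Sigma\cP)$ and gives a coordinate-free derivation of the same simplicial structure.
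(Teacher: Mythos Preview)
The paper does not actually prove this lemma: it is quoted verbatim from \cite[Lemma~1.3]{NerveSuspension}, and the footnote explicitly refers the reader there for the precise simplicial structure. So there is no ``paper's own proof'' to compare against; you have supplied a proof where the paper contents itself with a citation.

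Your direct verification is correct and is exactly the kind of argument one expects the cited reference to contain: define the face and degeneracy operators by row/column deletion and duplication, check the simplicial identities by the row--row, column--column, and mixed case split (the latter commuting trivially, the former reducing to the cosimplicial structure of $[k]$ and $[l]$), and read off the degenerate simplices from the shape of $s_p$. The only part requiring genuine care, as you note, is the bookkeeping at the seam $p=k,k+1$ and in the degenerate summands $k=-1$ or $l=-1$; nothing goes wrong there, but it does need to be written out. Your closing ``sanity check'' is in fact the content of the very next cited result, \cref{simplicesasmatrices}, and together with \cref{quotientoforiental} gives an alternative, coordinate-free construction of the simplicial structure that avoids the case analysis entirely---arguably a cleaner route than the bare-hands verification, and the one the paper implicitly relies on by citing both results in tandem.
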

 
We have the following identification.
 
    \begin{thm}[{\cite[Theorem 1.4]{NerveSuspension}}]
 \label{simplicesasmatrices}
Let $\cP$ be a $1$-category. There is an isomorphism of simplicial sets
\[N(\Sigma{\cP})\cong\mat{\cP}.\]
In particular, an $m$-simplex of the Duskin nerve of the suspension $\Sigma{\cP}$ can be described as a functor $[k]\times [l]^{\op} \to \cP$,
together with $k,l\ge-1$ such that $k+l=m-1$.
  \end{thm}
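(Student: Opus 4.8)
The plan is to exhibit the bijection one simplicial degree at a time, identify it explicitly in terms of a ``cut point'', and then check simpliciality against the cosimplicial structure on orientals. Fix $m\ge0$. An $m$-simplex of $N(\Sigma\cP)$ is a $2$-functor $F\colon\cO_2[m]\to\Sigma\cP$, and I would begin by analyzing $F$ on objects. Since $\Sigma\cP$ has only the two objects $x_{\bot},x_{\top}$ and admits no $1$-morphism $x_{\top}\to x_{\bot}$, while $\cO_2[m]$ carries a $1$-morphism $f_{ij}\colon x_i\to x_j$ for every pair $i\le j$, the assignment $i\mapsto F(x_i)$ is monotone for the order $x_{\bot}<x_{\top}$. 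Hence there is a unique $-1\le k\le m$ with $F(x_i)=x_{\bot}$ if and only if $i\le k$; setting $l:=m-1-k$ we obtain precisely $k,l\ge-1$ with $k+l=m-1$, which is the index set appearing in $\matn m\cP$.

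Next I would reduce $F$ to a functor out of a product poset. When $k=-1$ (resp.\ $l=-1$) every cell of $\cO_2[m]$ is sent among the identities of $x_{\top}$ (resp.\ of $x_{\bot}$), so $F$ is the total degeneracy $s_0^m x_{\top}$ (resp.\ $s_0^m x_{\bot}$), matching the single element of the $(-1,m-1)$-summand (resp.\ $(m-1,-1)$-summand) of $\matn m\cP$. When $k,l\ge0$, $F$ sends all of $\cO_2[\{0,\dots,k\}]\cong\cO_2[k]$ to $x_{\bot}$ and all of $\cO_2[\{k+1,\dots,m\}]\cong\cO_2[l]$ to $x_{\top}$, because the endomorphism categories of $\Sigma\cP$ are terminal; so $F$ factors uniquely through the quotient ${}_{\cO_2[k]}\backslash\cO_2[m]/_{\cO_2[l]}$, which by \cref{quotientoforiental} is $\Sigma([k]\times[l]^{\op})$, via a $2$-functor $\overline F$ with $\overline F(x_{\bot})=x_{\bot}$, $\overline F(x_{\top})=x_{\top}$. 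Finally, a $2$-functor between suspensions of $1$-categories that fixes the two objects is exactly the datum of a functor between those $1$-categories: the only non-terminal, non-initial hom-category of a suspension is the copy of the suspended category, and horizontal composition in a suspension only ever composes with identities, so it imposes no condition. Thus $\overline F$ corresponds to a functor $[k]\times[l]^{\op}\to\cP$, and combining the three steps yields, for each $m$, a bijection
\[
N(\Sigma\cP)_m\;\cong\;\coprod_{\substack{k,l\ge-1\\k+l=m-1}}\bigl\{[k]\times[l]^{\op}\to\cP\bigr\}\;=\;\matn m\cP .
\]

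It then remains to verify that this family of bijections is natural in $[m]\in\Delta$, i.e.\ that it carries the simplicial operators of $N(\Sigma\cP)$, induced by the cosimplicial $2$-category $\cO_2[-]$, to the face and degeneracy operators of $\mat\cP$. Here one uses the naturality in $k$ and $l$ of the isomorphisms $\varphi$ from \cref{quotientoforiental}, and then checks directly that precomposing $F$ with the coface $\cO_2[\delta^i]\colon\cO_2[m-1]\to\cO_2[m]$ moves the cut point from $k$ to $k-1$ when $i\le k$ and keeps it at $k$ when $i>k$, which under the correspondence above is exactly deletion of the $i$-th row, respectively of the $(i-k-1)$-st column, of the associated $\cP$-matrix; dually, precomposing with a codegeneracy $\cO_2[\sigma^i]$ produces the doubling of a row or column with identities inserted. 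I expect this last step --- the face/degeneracy bookkeeping, and especially correctly handling the boundary values $k=-1$ and $l=-1$ (the ``rows or columns of length zero'') --- to be the only genuinely delicate point; everything else is formal. Once the operators are matched, the degreewise bijections glue to an isomorphism of simplicial sets $N(\Sigma\cP)\cong\mat\cP$, and the concluding description of an $m$-simplex as a functor $[k]\times[l]^{\op}\to\cP$ with $k+l=m-1$ is then immediate from the construction.
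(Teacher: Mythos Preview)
The paper does not itself prove this result; it is quoted from an external reference, so there is no proof in the paper to compare against. Your argument is correct and is the natural one: it leverages \cref{quotientoforiental} (which is proved independently earlier in the paper and does not rely on the present statement) to factor an $m$-simplex $\cO_2[m]\to\Sigma\cP$ through the double quotient and hence through $\Sigma([k]\times[l]^{\op})$, and then uses that base-point-preserving $2$-functors between suspensions are in bijection with functors between the suspended $1$-categories. The only part you leave as a sketch is the naturality in $[m]$; your description of how cofaces and codegeneracies move the cut point and correspond to deleting or doubling a row or column is accurate, and filling this in is indeed routine bookkeeping using the naturality clause in \cref{quotientoforiental}.
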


 \begin{rmk}
 \label{rmkgrids}
 Let $\cP$ be a $1$-category.
 Under the isomorphism from \cref{simplicesasmatrices},
 \[N_m(\Sigma\cP)\cong\coprod_{\substack{{k,l\ge-1,}\\{k+l=m-1}}}\big\{\sigma\colon [k]\times [l]^{\op} \to\cP\big\},\]
 each $m$-simplex of $N\Sigma\cP$ can be uniquely described as a functor $\sigma\colon[k]\times [l]^{\op} \to \cP$, which can be pictured as a ``matrix'' valued in $\cP$
\[{
\small \begin{tikzcd}
 p_{0l} \arrow{d} \arrow{r} & p_{0 (l-1)} \arrow{d} \arrow{r} &  \cdots \arrow{r}   & p_{0 0} \arrow{d}\\
  p_{1 l} \arrow{d}\arrow{r} & p_{1 (l-1)} \arrow{d} \arrow{r} &  \cdots \arrow{r}  & p_{1 0 } \arrow{d}\\
\vdots \arrow{d}& \vdots \arrow{d}  & \ddots & \vdots \arrow{d} \\
p_{k l} \arrow{r}  & p_{k (l-1)} \arrow{r}  & \cdots   \arrow{r} & p_{k 0}.
\end{tikzcd}
}\]
In particular, for any $k$ there is a unique $k$-simplex of the form $[k]\times[-1]^{\op}\cong\varnothing\to\cP$, which can be imagined as a column of length $k$ and empty width and corresponds to the $k$-fold degeneracy of $x_{\bot}$. Similarly, for any $l\ge 0$ there is a unique $l$-simplex of the form $[-1]\times[l]^{\op}\cong\varnothing\to\cP$, which can be imagined as a row of length $l$ and empty width and corresponds to the $l$-fold degeneracy of $x_{\top}$.
 \end{rmk}

 \begin{rmk}
Let $\cP$ be a $1$-category.
 \begin{enumerate}[leftmargin=*]
     \item
     Under the identification from \cref{simplicesasmatrices},
we see that the canonical map from \cref{comparisonnervesuspension}
 \[\Sigma (N\cP)\to N(\Sigma\cP)\]
 \begin{itemize}[leftmargin=*]
    \item is the identity on $0$-simplices, namely sends $x_{\bot}$ to $x_{\bot}$ and $x_{\top}$ to $x_{\top}$, and
     \item  sends an $(m+1)$-simplex $\sigma\colon[m]\to\cP$ of $\Sigma (N\cP)$ to the $(m+1)$-simplex $\sigma\colon [m]\cong[m]\times [0]^{\op}\to\cP$.
 \end{itemize}
\item Furthermore, a non-degenerate $(m+1)$-simplex of $\Sigma(N\cP)$ is marked in $\Sigma(\NRS\cP)$
 if and only if and only if the corresponding $m$-simplex of $N\cP$ is marked in $\NRS\cP$.
 \end{enumerate}
 \end{rmk}
 
\cref{rmkgrids} suggests that the number of rows $k$
 is a relevant feature of simplices of $N(\Sigma\cP)$: the ``type''. This notion was already considered and widely discussed in \cite[\textsection2]{NerveSuspension}.

\begin{defn}
Let $\cP$ be a $1$-category.
Let $\sigma\colon[k]\times [m-k-1]^{\op}\to \cP$ be an $m$-simplex of $N(\Sigma\cP)$.
The \emph{type} of $\sigma$
is the integer $k$.
\end{defn}

\begin{rmk}
Let $\cP$ be a $1$-category.
The type $k$ of an $m$-simplex $\sigma$ of $N(\Sigma\cP)$ given in the form $\sigma\colon\cO_2[m]\to\Sigma\cP$ can also be recognized as
\[
k=
\left
\{\begin{array}{ll}
-1& \mbox{ if }\sigma=s_0^mx_{\top},\\
\max\{0\leq s\leq m\;|\;\sigma(s)=x_{\bot}\}& \mbox{ else.}\\
m& \mbox{ if }\sigma=s_0^mx_{\bot}
\end{array}
\right.
\]
This means that the $n$-functor $\sigma\colon\cO_2[m]\to \Sigma\cP$ sends the first $k+1$ objects of $\cO_2[m]$ to $x_{\bot}$ and the remaining objects to $x_{\top}$:
\[\sigma(s)=\left\{
\begin{array}{rll}
x_{\bot}&\text{for any vertex } 0\leq s \leq k\text{ of }\mathcal O_2[m]\\x_{\top}&\text{for any vertex } k+1\leq s \leq m\text{ of }\mathcal O_2[m].
\end{array}\right.\]
\end{rmk}

We will also make use of another useful feature of simplices: the ``suspect index'', and of a class of simplices of $N(\Sigma\cP)$:
the ``suspect simplices''.

\begin{defn}
\label{FirstFiltration}
Let $\cP$ be a $1$-category. Let
$\sigma\colon[k]\times [d-k]^{\op}\to \cP$
be a 
$(d+1)$-simplex  of $N(\Sigma\cP)$ of type $k$.
\begin{itemize}[leftmargin=*]
    \item The \emph{suspect index} of $\sigma$ is the minimal
$0\leq r\leq k$ such that
 for all $r\leq i\leq k$ each row $\{i\}\times [d-k]^{\op}\to \cP$ is constant. If there is no such integer, we define the suspect index to be $k+1$. 
 \item A simplex $\sigma$ is called \emph{suspect}
if it is degenerate or if it is non-degenerate of type $k$ and suspect index $r\leq k$
and  
\[\sigma\big((r-1,0)<\sigma(r,0)\big)=\id_{\sigma(r-1,0)}.\]
\end{itemize}
\end{defn}

\begin{ex}\label{FirstExSuspect}
Let $\cP$ be a $1$-category, and $g$ a non-identity morphism. Consider the following two $6$-simplices of $N(\Sigma\cP)$
\[
{
\scriptsize \begin{tikzcd}
 p_{02} \arrow{d} \arrow{r}&p_{01}\arrow{r}\arrow{d}&p_{00}\arrow{d} \\
  p_{1 2} \arrow{r} \ar[d]& p_{11}\arrow{r} \arrow[d] & p_{10}\arrow[d, "="]  \\
  p_{10} \arrow[r, "="] \ar[d, "f"] & p_{10} \arrow[r, "="] \ar[d, "f"] & p_{10} \ar[d, "f"]\\
  p_{30}\arrow[r, "="] & p_{30}\arrow[r, "="]& p_{30}
\end{tikzcd}
}
\quad\quad \quad {
\scriptsize \begin{tikzcd}
 p_{02} \arrow{d} \arrow{r}&p_{01}\arrow{r}\arrow{d}&p_{00}\arrow{d} \\
  p_{1 2} \arrow{r} \ar[d]& p_{11}\arrow{r} \arrow[d] & p_{10}\arrow[d, "g\neq \id"]  \\
  p_{20} \arrow[r, "="] \ar[d, "f"] & p_{20} \arrow[r, "="] \ar[d, "f"] & p_{20} \ar[d, "f"]\\
  p_{30}\arrow[r, "="] & p_{30}\arrow[r, "="]& p_{30}
\end{tikzcd}
}
\]
They both have type $3$, and have suspect index $2$. 
However, given that $g$ is not an identity, only the first one is a suspect simplex.
 \end{ex}

 We record for future reference the following features of the faces of a suspect simplex.
These properties, whose proof we omit, can be deduced from a careful case distinction for the types.

\begin{lem}\label{BoundarySuspect2}
Let $\cP$ be $1$-category.
Let $\sigma$ be a non-degenerate suspect $(d+1)$-simplex of $N(\Sigma\cP)$ of type $k$ and suspect index $r\leq k$.
The $a$-th face of $\sigma$
\[d_a(\sigma)\text{ is }\left\{
\begin{array}{ll}
  \text{a suspect simplex}   & \text{if $0\leq a \leq r-2$} \\
   \text{of suspect index at most $(r-1)$}   &  \text{if } a=r-1\\
    \text{of type $k-1$  and suspect index $r$}   & \text{if } a=r\\
     \text{a suspect simplex}   & \text{if $r+1\leq a \leq k$}\\
     \text{of type $k$}   & \text{if $k+1\leq a \leq k+1+l.$}
\end{array}\right.\]
\end{lem}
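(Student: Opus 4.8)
The plan is to prove all five statements by a single case analysis over the face index $a$, carried out in the combinatorial model $N(\Sigma\cP)\cong\mat{\cP}$ of \cref{simplicesasmatrices} and using the grid picture of \cref{rmkgrids}. Fix a non-degenerate suspect $(d+1)$-simplex $\sigma\colon[k]\times[d-k]^{\op}\to\cP$ of type $k$ and suspect index $r\le k$, drawn as a grid with rows $0,\dots,k$ (read top to bottom) and $d-k+1$ columns, the rightmost being ``column $0$''. The first task is to pin down the dictionary between the $d+2$ face operators and the grid: for $0\le a\le k$ the face $d_a(\sigma)$ deletes row $a$, composing the two vertical morphisms adjacent to the deleted row, while for $k+1\le a\le d+1$ the face $d_a(\sigma)$ deletes one of the columns. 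From the description of the type in terms of the values $\sigma(s)\in\{x_\bot,x_\top\}$ one gets at once that a row-deletion yields a simplex of type $k-1$ while a column-deletion yields one of type $k$. Two features of $\sigma$ get used throughout: rows $r,\dots,k$ are constant and, when $r\ge1$, row $r-1$ is not (definition of the suspect index); and the vertical arrow of $\sigma$ from $(r-1,0)$ to $(r,0)$, the one in column $0$ entering the constant block, is an identity (definition of suspect).

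I would then treat the five ranges of $a$ in turn, writing $\tau:=d_a(\sigma)$. For $0\le a\le r-2$, deleting row $a$ leaves rows $r-1,\dots,k$ of $\sigma$ intact — shifted up to positions $r-2,\dots,k-1$ of $\tau$ — so $\tau$ has constant rows $r-1,\dots,k-1$ and non-constant row $r-2$, hence suspect index exactly $r-1$; moreover the composition of vertical maps produced by the deletion happens strictly above the arrow of $\tau$ from $(r-2,0)$ to $(r-1,0)$, so that arrow is literally the identity arrow of $\sigma$ from $(r-1,0)$ to $(r,0)$, whence $\tau$ is suspect. For $a=r-1$, rows $r,\dots,k$ of $\sigma$ become rows $r-1,\dots,k-1$ of $\tau$ and remain constant, so $\tau$ has suspect index at most $r-1$ — all that is asserted. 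For $a=r$, rows $r+1,\dots,k$ of $\sigma$ become the constant rows $r,\dots,k-1$ of $\tau$ while row $r-1$ of $\tau$ is the unchanged (non-constant, when $r\ge1$) row $r-1$ of $\sigma$, so $\tau$ has type $k-1$ and suspect index exactly $r$; here the deletion post-composes the identity entering the constant block with the vertical arrow of $\sigma$ from $(r,0)$ to $(r+1,0)$, which need not be an identity, so $\tau$ need not be suspect, matching precisely the weaker assertion for this face. For $r+1\le a\le k$, the deletion occurs inside the constant block, leaving constant rows $r,\dots,k-1$, non-constant row $r-1$, and the column-$0$ witness arrow untouched, so $\tau$ is suspect. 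Finally, for $k+1\le a\le d+1$, deleting a column changes neither the values $\sigma(s)$ nor the number of rows, so $\tau$ has type $k$; nothing stronger holds in general, since deleting column $0$ both alters which column is ``column $0$'' and may turn a non-constant row into a constant one.

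The entire argument is this bookkeeping, so the only point needing genuine care — the main obstacle, such as it is — lies in (i) keeping straight which morphisms get composed when a row is deleted, so as to see that in the case $a=r$ the witnessing identity is post-composed away whereas in the cases $a<r-1$ and $a>r$ the relevant witness arrow is untouched; and (ii) the small cases $r\in\{0,1\}$, in which some of the five ranges are empty and the suspect-witness condition trivializes — for $r=0$ every row of $\sigma$ is already constant, and for $r=1$ there is no row $r-2$ — together with the observation that whenever $d_a(\sigma)$ is itself degenerate it is suspect by definition and nothing further is needed. Collecting these verifications gives the five claims.
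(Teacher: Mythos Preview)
Your proposal is correct and follows precisely the approach the paper indicates: the paper itself omits the proof, saying only that ``these properties \dots\ can be deduced from a careful case distinction for the types,'' and your case analysis in the matrix model is exactly that case distinction, carried out in full. The bookkeeping you identify---tracking which vertical arrow gets composed away when a row is deleted, and handling the edge cases $r\in\{0,1\}$ and degenerate faces---is the only content, and you have it right.
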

 
For the sake of intuition, one can verify the validity of the lemma in the example below.
 
 \begin{ex}
 The following pictures display the $a$-th face of the suspect $6$-simplex of index $2$ and type $3$ considered in \cref{FirstExSuspect}.
 \[
 \begin{array}{cccc}
{
\scriptsize \begin{tikzcd}
  p_{1 2} \arrow{r} \ar[d]& p_{11}\arrow{r} \arrow[d] & p_{10}\arrow[d, "="]  \\
  p_{10} \arrow[r, "="] \ar[d, "f"] & p_{10} \arrow[r, "="] \ar[d, "f"] & p_{10} \ar[d, "f"]\\
  p_{30}\arrow[r, "="] & p_{30}\arrow[r, "="]& p_{30}
\end{tikzcd}
}
&
{
\scriptsize \begin{tikzcd}
 p_{02} \arrow{d} \arrow{r}&p_{01}\arrow{r}\arrow{d}&p_{00}\arrow[d, "=?"] \\
  p_{10} \arrow[r, "="] \ar[d, "f"] & p_{10} \arrow[r, "="] \ar[d, "f"] & p_{10} \ar[d, "f"]\\
  p_{30}\arrow[r, "="] & p_{30}\arrow[r, "="]& p_{30}
\end{tikzcd}
}
&
{
\scriptsize \begin{tikzcd}
 p_{02} \arrow{d} \arrow{r}&p_{01}\arrow{r}\arrow{d}&p_{00}\arrow{d} \\
  p_{1 2} \arrow{r} \ar[d]& p_{11}\arrow{r} \arrow[d] & p_{10}\arrow[d, "g\neq \id"]  \\
  p_{30}\arrow[r, "="] & p_{30}\arrow[r, "="]& p_{30}
\end{tikzcd}
}
\\
a=0\leq r-2 & a=1=r-1 & a=2=r &
\end{array}
\]

\[
\begin{array}{ccc}
{
\scriptsize \begin{tikzcd}
 p_{02} \arrow{d} \arrow{r}&p_{01}\arrow{r}\arrow{d}&p_{00}\arrow{d} \\
  p_{1 2} \arrow{r} \ar[d]& p_{11}\arrow{r} \arrow[d] & p_{10}\arrow[d, "="]  \\
  p_{10} \arrow[r, "="]  & p_{10} \arrow[r, "="]  & p_{10} \\
\end{tikzcd}
}
&
{
\scriptsize \begin{tikzcd}
 p_{02} \arrow{d} \arrow{r}&p_{01}\arrow{d} \\
  p_{1 2} \arrow{r} \ar[d]& p_{11} \arrow[d]   \\
  p_{10} \arrow[r, "="] \ar[d, "f"] & p_{10}  \ar[d, "f"] \\
  p_{30}\arrow[r, "="] & p_{30}
\end{tikzcd}
}
&
\\
 r+1\leq a=3\leq k & k+1\leq a=4 \leq k+1+l &
\end{array}
\]
 \end{ex}

We now use the explicit description of simplices of $N(\Sigma\cP)$ to give an explicit description of the comparison map from \cref{comparisonnervesuspension}.

 We can now prove (\ref{ThmBpart2}) of \cref{ThmB}.

 \begin{thm}
 \label{ThmB2}
For any category $\cP$, the canonical inclusion
  \[\Sigma (\NRS\cP)\to \NRS(\Sigma\cP)\]
is a complicial inner anodyne extension, and in particular an $(\infty,2)$-weak equivalence.
 \end{thm}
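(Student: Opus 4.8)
The plan is to exhibit the inclusion $\Sigma(\NRS\cP)\hookrightarrow\NRS(\Sigma\cP)$ as a transfinite composite of pushouts of complicial inner anodyne extensions, and then to conclude via \cref{underlyingcomplicialinner}. First I would pin down $\Sigma(\NRS\cP)$ inside $\NRS(\Sigma\cP)\cong\mat{\cP}$ using \cref{simplicesasmatrices}: its non-degenerate simplices are precisely the matrices $[k]\times[l]^{\op}\to\cP$ all of whose rows are constant --- equivalently, the non-degenerate simplices of suspect index $0$ in the sense of \cref{FirstFiltration} --- together with the degenerate simplices $s_0^m x_\bot$ and $s_0^m x_\top$ from \cref{rmkgrids}. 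I would also record that, since $\cP$ is a $1$-category, the Roberts--Street marking on $\NRS(\Sigma\cP)$ is simply ``everything in dimension $\geq 3$, and only degenerate simplices in dimensions $1$ and $2$'', and that by \cref{comparisonnervesuspension} the inclusion is regular.

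Next I would set up the filtration. Every non-degenerate simplex of $\NRS(\Sigma\cP)$ outside $\Sigma(\NRS\cP)$ has suspect index $r\geq 1$, and the non-suspect ones of type $k-1$ and suspect index $r$ are in bijection with the suspect simplices $\hat\sigma$ of type $k$ and suspect index $r$, the two directions being given by inserting a constant row in position $r$ and by the face operator $d_r$. I would then filter $\NRS(\Sigma\cP)$, starting from $\Sigma(\NRS\cP)$, by the triple (dimension $m$ of $\hat\sigma$; suspect index $r$, increasing; type $k$, decreasing), so that at the innermost stage one adjoins simultaneously, over all suspect $\hat\sigma$ with these invariants, both $\hat\sigma$ and its face $\sigma=d_r\hat\sigma$. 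Using \cref{BoundarySuspect2} one checks that every remaining face $d_a\hat\sigma$ with $a\neq r$ has already been adjoined: the faces with $0\leq a\le r-2$ and with $r+1\le a\le k$ are suspect of dimension $m-1$, hence adjoined in an earlier round; the face $d_{r-1}\hat\sigma$ has strictly smaller suspect index; the faces with $a\geq k+1$ have type $k$ and dimension $m-1$, hence lie in an earlier round or, via their own suspect partner of type $k+1$, in an earlier stage of the current round; and $d_r\hat\sigma$ itself is the simplex adjoined alongside $\hat\sigma$, and it is genuinely new because $\hat\sigma$ is its unique suspect preimage under $d_{(-)}$. Thus each transition is a pushout of a coproduct of maps $\Lambda^r[m]\to\Delta^r[m]$ (when $m=3$) or $\Lambda^r[m]'\to\Delta^r[m]''$ (when $m\geq 4$), which are complicial inner anodyne by \cref{CompMarkAtOnce} since $1\le r\le k<m$, glued along the faces $d_a\hat\sigma$ with $a\neq r$.

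The delicate point, and the one I expect to be the main obstacle, is checking that these pushouts reproduce exactly the Roberts--Street marking rather than a larger one --- that is, controlling the interaction between the ``algorithmic'' complicial marking of $\Delta^r[m]$ and the ``semantic'' Roberts--Street marking. The cell $\Delta^r[m]$ marks its top simplex and every sub-simplex containing $\{r-1,r,r+1\}$; the one potentially dangerous over-marking is the $2$-simplex on $\{r-1,r,r+1\}$, but a short computation combining the suspect condition with the description of the generating $2$-cells of the orientals from \cref{quotientoforiental} shows that this $2$-simplex is always degenerate, hence legitimately marked. One then separates the case $m=3$, where the new face $\sigma=d_r\hat\sigma$ has dimension $2$ and must stay unmarked --- forcing the bare horn $\Lambda^r[3]\to\Delta^r[3]$, and where $d_{r+1}\hat\sigma$ is moreover non-degenerate so that a primed variant would mark it incorrectly --- from the case $m\geq 4$, where $\sigma$ has dimension $\geq 3$ and must be marked, which is exactly what $\Lambda^r[m]'\to\Delta^r[m]''$ achieves, while the auxiliary faces it requires to be already marked, namely $d_{r-1}\hat\sigma$ and $d_{r+1}\hat\sigma$, are of dimension $\geq 3$ and hence are. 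Once the filtration and its markings are verified, $\Sigma(\NRS\cP)\to\NRS(\Sigma\cP)$ is a complicial inner anodyne extension, hence an $(\infty,2)$-acyclic cofibration by \cref{underlyingcomplicialinner}; passing to underlying simplicial sets and invoking the second item of \cref{underlyingcomplicialinner} then yields \cref{ThmB}(\ref{ThmBpart1}) as well.
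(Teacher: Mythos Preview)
Your proposal is correct and follows essentially the same strategy as the paper: the same filtration by dimension together with the suspect/non-suspect bijection $\sigma\leftrightarrow\hat\sigma$, the same inner-horn fillings at the $r$-th face, and the same marking analysis reducing to the degeneracy of the $\{r-1,r,r+1\}$-face. The only difference is that you nest the two inner loops in the opposite order (suspect index before type rather than type before suspect index), which is a harmless variation since the face analysis via \cref{BoundarySuspect2} goes through either way; your dimension-based split $m=3$ versus $m\geq 4$ for choosing $\Lambda^r[m]\to\Delta^r[m]$ versus $\Lambda^r[m]'\to\Delta^r[m]''$ is equivalent to the paper's split on whether $\tau$ is marked.
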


In order to prove the theorem, we will add all simplices of $\NRS(\Sigma\cP)$ missing from $\Sigma(\NRS\cP)$ inductively on their (ascending) dimension $d$, their (descending) type $k$, and their (ascending) suspect index $r$.

\begin{proof}
 In order to show that the inclusion $\Sigma (\NRS\cP)\to \NRS(\Sigma\cP)$ is a complicial inner anodyne extension, we will realize it as a transfinite composite of intermediate complicial inner anodyne extensions
\[\Sigma(\NRS\cP)=:X_1\hookrightarrow X_2\hookrightarrow\dots\hookrightarrow X_{d-1}\hookrightarrow X_{d}\hookrightarrow\dots \hookrightarrow   \NRS(\Sigma\cP).\]
For $d\geq 2$, we let $X_d$ be the smallest regular subsimplicial set of $N(\Sigma\cP)$ 
containing $X_{d-1}$, all $d$-simplices of $N(\Sigma\cP)$, as well as
the suspect $(d+1)$-simplices of $N(\Sigma\cP)$.
Note that $X_1$ already contains all non-degenerate $1$-simplices of $\NRS(\Sigma\cP)$ and that there are no non-degenerate suspect $2$-simplices.
We see that the difference between $X_{d-1}$ and $X_d$ are the non-degenerate non-suspect $d$-simplices and the non-degenerate suspect $(d+1)$-simplices.

In order to show that the inclusion $X_{d-1}\hookrightarrow X_{d}$ is a complicial inner anodyne extension for all $d\geq 2$, we realize it as a composite of intermediate complicial inner anodyne extensions
\[X_{d-1}=:Y_{d} \hookrightarrow Y_{d-1} \hookrightarrow \ldots \hookrightarrow Y_{k+1}\hookrightarrow Y_{k}\hookrightarrow\ldots \hookrightarrow Y_{1}=X_{d}.\]
For $1\le k< d$,
let $Y_k$ be the
smallest regular subset of $X_{d}$ containing $Y_{k+1}$ as well as 
all non-degenerate suspect $(d+1)$-simplices $\widetilde{\tau}$ of $N(\Sigma\cP)$ of type $k$ and all non-degenerate non-suspect $d$-simplices of type $k-1$. Note that $Y_d$ already contains all non-degenerate $d$-simplices of type $d-1$ and that any suspect $(d+1)$-simplex of type $d$ is necessarily degenerate and thus can be checked to be also already in $Y_d$.
We see using \cref{BoundarySuspect2} that the difference between $Y_{k}$ and $Y_{k+1}$ are
the non-degenerate suspect $(d+1)$-simplices of type $k$ and possibly some of their faces (precisely those that are not suspect and those that are not of a higher type).

In order to show that the inclusion $Y_{k+1}\hookrightarrow Y_k$ is a complicial inner anodyne extension for $1\le k\le d-1$, we realize it as a filtration made by intermediate complicial inner anodyne extensions
\[Y_{k+1}=:W_0\hookrightarrow W_1 \hookrightarrow \ldots \hookrightarrow W_{r-1}\hookrightarrow W_r\hookrightarrow\ldots\hookrightarrow W_{k}=Y_{k}.\]
For $0<r\leq k$, we let $W_r$ be the smallest regular simplicial subset of $Y_k$
containing $W_{r-1}$ as well as 
all suspect $(d+1)$-simplices of $  \NRS(\Sigma\cP)$ of type $k$ and suspect index $r$, namely those $\widetilde{\tau}$
for which each $i$-th row constant for $r\leq i \leq k$.
Note that any simplex of suspect index $0$ is degenerate and can be checked to be already in $X_1\subset W_0$.
We see using \cref{BoundarySuspect2} that the difference between $W_{r-1}$ and $W_r$ are the non-degenerate suspect $(d+1)$-simplices $\widetilde{\tau}$ of type $k$ and suspect index $r$ and the non-degenerate non-suspect $d$-simplices $\tau$
of type $k-1$ and suspect index $r$. 

There is a bijective correspondence between the $(d+1)$- and $d$-simplices mentioned above, as follows.
On the one hand, given any such $\tau$ one can build the suspect $(d+1)$-simplex \[\widetilde{\tau}\colon [k]\times [d-k]^{\op} \to  \cP\]
of $\NRS(\Sigma\cP)$ of suspect index $r$ 
 obtained from $\tau$ by adding as $r$-row the constant map $\{r\}\times [d-k]^{\op} \to  \cP$ with value $\tau(r-1,0)$; on the other hand, given any suspect $(d+1)$-simplex $\widetilde{\tau}$, one obtains $\tau$ as $\tau=d_{r}(\widetilde{\tau})$.
 
We now record some relevant properties the $(d+1)$-simplices $\widetilde\tau$ as above.
\begin{itemize}[leftmargin=*]
    \item We argue that by induction and using \cref{BoundarySuspect2} the $r$-horn of $\widetilde{\tau}$ belongs to $W_{r-1}$; in particular, the $r$-horn defines a map of (underlying) simplicial sets
    \[\Lambda^{r}[d+1]\to W_{r-1}.\]
Indeed, using \cref{BoundarySuspect2} we see that the $a$-th face of $\widetilde{\tau}$ is already in $W_{r-1}$ for $a\neq r$ since:
\begin{itemize}[leftmargin=*]
\renewcommand\labelitemii{$\diamondsuit$}
    \item if $0\leq a \leq r-2$, the face $d_a(\widetilde\tau)$ is a suspect $d$-simplex, and in particular it belongs to $X_{d-1}\subset W_{r-1}$.
     \item if $a=r-1$, the face $d_a(\widetilde\tau)$ has suspect index at most $(r-1)$, and in particular it belongs to $W_{r-1}$ (even in $X_{d-1}$ if $r=1$).
        \item if $r+1\leq a \leq k$, the face $d_a(\widetilde\tau)$ is a suspect $d$-simplex, and in particular it belongs to $X_{d-1}\subset W_{r-1}$.
     \item if $k+1\leq a\leq k+1+l$, the face $d_a(\widetilde\tau)$ is of type $k$,
     and in particular it belongs to $Y_{k+1}\subset W_{r-1}$.
        \end{itemize}
    \item We argue that the $r$-th horn of $\widetilde{\tau}$ defines a map of simplicial sets
    \[\Lambda^r[d+1]\to W_{r-1}\]
    with  marking.
To this end, we observe that that all simplices are marked in dimensions at least $3$ in $W_{r-1}$, no non-degenerate simplices are marked in dimension $1$ in $\Lambda^r[d+1]$ (because $0<r<d+1$), and the only marked $2$-simplex of $\Lambda^r[d+1]$ is the $2$-dimensional face $\{r-1,r,r+1\}$. In particular, it is enough to show that now that this face is mapped to a degenerate $2$-simplex of $W_{r-1}$.
If $r<k$, then all the vertices of the $2$-dimensional face $\{r-1,r,r+1\}$ are mapped to $x_{\bot}$, and the $2$-simplex is mapped to the degenerate $2$-simplex at $x_{\bot}$. If $r=k$, then the $2$-dimensional face $\{r-1,r,r+1\}$ is mapped to the $2$-simplex of $N(\Sigma\cP)$
 \[{
\begin{tikzcd}
\widetilde{\tau}(r-1,0)\arrow[d, "="] \\
\widetilde{\tau}(r, 0) ,
\end{tikzcd}
}\]
which is degenerate because $\widetilde\tau$ is a suspect simplex of suspect index $r$. 
    \item If furthermore $\tau$ is marked, we argue that the $r$-th horn of $\widetilde{\tau}$ defines a map of simplicial sets with  marking
        \[\Lambda^r[d+1]'\to W_{r-1},\]
        with the simplicial set with marking $\Lambda^r[d+1]'$ defined in \cref{CompMarkAtOnce}.
        To this end, we need to show the $(r-1)$-st and $(r+1)$-st faces are mapped to a marked simplex of $W_{r-1}$. This is true when $d>2$ because all simplices in dimension at least $3$ are marked in $W_{r-1}$, and we now address the case $d=2$.
        In this case, the non-degenerate suspect $3$-simplex $\widetilde\tau$ is necessarily of the form
           \[{
\begin{tikzcd}
\widetilde{\tau}(1) \arrow[d, "\widetilde{\tau}(10)"swap] \arrow[r, "\widetilde{\tau}(10)"]&\widetilde{\tau}(0)\arrow[d, "="] \\
 \widetilde{\tau}(0) \arrow[r, "="swap] & \widetilde{\tau}(0).
\end{tikzcd}
}\]
        and in particular $k=1=r$. The zeroth face of $\widetilde\tau$ is degenerate and thus marked, and the second face of $\widetilde\tau$ must be marked because it
    is inhabited by the same $2$-morphism of $\Sigma\cP$ (so $1$-morphism of $\cP$) as $\tau$, which is marked by assumption.

\end{itemize}
 By filling all $r$-horns of suspect $(d+1)$-simplices $\widetilde{\tau}$ of $W_r$, we then obtain their $r$-th face $\tau$, which was missing in $W_{r-1}$, as well as the suspect $(d+1)$-simplex $\widetilde{\tau}$ itself.
This can be rephrased by saying that there is a pushout square
\[
\begin{tikzcd}[column sep=1.0cm]
\underset{\mathclap{\begin{subarray}{c}
  \tau \\
  \mbox{\tiny{non-marked}}
  \end{subarray}} }{\coprod} \Lambda^{r}[d+1]\amalg\underset{\mathclap{\begin{subarray}{c}
 \tau \\
  \mbox{\tiny{marked}}
  \end{subarray} }}{\coprod} \Lambda^{r}[d+1]'\arrow[d]\arrow[r] &
 \underset{\mathclap{\begin{subarray}{c}
  \tau \\
  \mbox{\tiny{non-marked}}
  \end{subarray} }}{\coprod} \Delta^{r}[d+1]\amalg \underset{\mathclap{\begin{subarray}{c}
\tau\\
  \mbox{\tiny{marked}}
  \end{subarray}} }{\coprod} \Delta^{r}[d+1]''\arrow[d]\\
W_{r-1} \arrow[r]& W_r.
\end{tikzcd}
\]
Since the involved horn inclusions are in fact inner horn inclusions,
the inclusions of simplicial sets with marking $\Lambda^r[d+1]\hookrightarrow\Delta^r[d+1]$ and $\Lambda^r[d+1]'\hookrightarrow\Delta^r[d+1]''$ are complicial inner anodyne extensions by \cref{CompMarkAtOnce}.

It follows that $W_{{r-1}}\hookrightarrow W_r$ is an anodyne for any $1\le r\le d-j$, that the inclusion $Y_{j-1}\hookrightarrow Y_j$ for any $1\le j\le d$, the inclusion $Y_{j-1}\hookrightarrow Y_j$ for any $1\le j\le d$, the inclusion $X_{d-1}\hookrightarrow X_d$ for any $d\ge1$, and $\Sigma (\NRS\cP)\to \NRS(\Sigma\cP)$ are complicial inner anodyne extensions, as desired.
\end{proof}

As an instance of \cref{underlyingcomplicialinner} (or by reading the previous proof ignoring the marking), we obtain the following corollary, which is (\ref{ThmBpart1}) of \cref{ThmBIntro}.

\begin{cor}
\label{ThmB1}
For any category $\cP$, the canonical inclusion
  \[\Sigma (N\cP)\to N(\Sigma\cP)\]
is an inner anodyne extension, and in particular a categorical equivalence.
\end{cor}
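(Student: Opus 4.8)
The plan is to obtain this immediately from the marked statement \cref{ThmB2} together with the formal properties recorded in \cref{underlyingcomplicialinner}. By definition the underlying simplicial set of the Roberts--Street nerve $\NRS\cD$ is the Street nerve $N\cD$, and the underlying simplicial set functor $m\sset\to\sset$ preserves colimits (being a left adjoint), so it carries the pushout defining the marked suspension $\Sigma(\NRS\cP)$ to the pushout defining $\Sigma(N\cP)$. Hence the underlying simplicial map of the inclusion $\Sigma(\NRS\cP)\to\NRS(\Sigma\cP)$ of \cref{ThmB2} is precisely the canonical inclusion $\Sigma(N\cP)\to N(\Sigma\cP)$. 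By the second point of \cref{underlyingcomplicialinner}, the underlying map of any complicial inner anodyne extension is an inner anodyne extension of simplicial sets, and \cref{ThmB2} identifies the former map as a complicial inner anodyne extension; therefore $\Sigma(N\cP)\to N(\Sigma\cP)$ is inner anodyne.

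Equivalently, one can simply reread the proof of \cref{ThmB2} ignoring all marking data. The filtration $X_1\hookrightarrow X_2\hookrightarrow\cdots$ converging to $N(\Sigma\cP)$, refined by (descending) type into the steps $Y_{k+1}\hookrightarrow Y_k$ and then by (ascending) suspect index into the steps $W_{r-1}\hookrightarrow W_r$, exhibits $\Sigma(N\cP)\to N(\Sigma\cP)$ as a transfinite composite of pushouts of the inner horn inclusions $\Lambda^r[d+1]\hookrightarrow\Delta^r[d+1]$ with $0<r<d+1$; the contributions $\Lambda^r[d+1]'\hookrightarrow\Delta^r[d+1]''$ appearing in the marked argument have the same underlying simplicial map, so nothing new enters. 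Since inner anodyne extensions are in particular acyclic cofibrations in the Joyal model structure, $\Sigma(N\cP)\to N(\Sigma\cP)$ is a categorical equivalence, which gives the last clause of the statement.

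There is no genuine obstacle here: the substantive combinatorics was already carried out in the proof of \cref{ThmB2}, and all that remains is the bookkeeping that the underlying-simplicial-set functor intertwines the marked and unmarked suspensions and identifies $\NRS$ with $N$, both of which are immediate from the definitions.
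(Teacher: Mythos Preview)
Your proposal is correct and matches the paper's own argument exactly: the paper derives the corollary ``as an instance of \cref{underlyingcomplicialinner} (or by reading the previous proof ignoring the marking)'', which is precisely the two routes you spell out. Your additional remarks about why the underlying simplicial set of $\Sigma(\NRS\cP)$ is $\Sigma(N\cP)$ are just making explicit the implicit bookkeeping.
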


    \section{Nerve vs wedge - The results}
  \label{sec:WedgeResults}
  
  In this section, we illustrate the results and applications related to the compatibility of nerve and certain gluing construction that we call ``wedge''.
  
  \begin{defn}
  \label{wedgencat}
Let $n\in\mathbb N\cup\{\infty\}$. Let $\cA$ be an $n$-category, and $a_{\top}$ (resp. $a_{\bot}$) an object of $\cA$. The object $a_{\top}$ (resp. $a_{\bot}$) is a \emph{cosieve object} (resp.~\emph{sieve object}) if the following equivalent\footnote{The equivalence of the conditions can be seen as a special case of the argument from \cite[\textsection 2.3]{AraMaltsiniotisVers}.} conditions are met.
\begin{itemize}[leftmargin=*]
 \item Given any object $b\in\cA$, the hom $(n-1)$-category $\Map_{\cA}(a_\top,b)$ (resp.~$\Map_{\cA}(b,a_{\bot})$) 
 is given by
  \[
  \Map_{\cA}(a_{\top},b)=\left\{
 \begin{array}{lll}
 \{\id_{a_{\top}}\}& b=a_{\top}\\
 \varnothing& b\neq a_{\top}
 \end{array}
 \right.\!
 \left(\text{resp.~\vphantom{$x$}}
 \Map_{\cA}(b,a_{\bot})=\left\{
 \begin{array}{lll}
 \{\id_{a_{\bot}}\}& b=a_{\bot}\\
 \varnothing& b\neq a_{\bot}
 \end{array}
 \right.
 \!\right)
 \]

    \item The inclusion $a\colon[0]\hookrightarrow\cA$ is a cosieve (resp.~sieve), as defined in \cite[\textsection 2.3]{AraMaltsiniotisVers} under the name of \emph{cocrible} (resp.~\emph{crible}), i.e., there is an $n$-functor $\chi\colon\cA\to[1]$ that restricts to an isomorphism of $n$-categories \[\chi^{-1}\{1\}\cong\{a_{\top}\} \text{ (resp.~}\chi^{-1}\{0\}\cong\{a_{\bot}\}\text{)}.\]
\end{itemize}
\end{defn}

\begin{ex}
Let $\cP$ be a $1$-category (e.g.~$\cP=[k]$). The suspension $2$-category $\Sigma\cP$ (e.g.~$\Sigma\cP=[1|k]$) has a (unique) cosieve object, given by the last object, and a (unique) sieve object, given by the first object.
\end{ex}

 We consider the following type of pushout of $n$-categories along (co)sieve objects.
  
  \begin{defn}
  \label{DefWedge}
  \label{wedge}Let $n\in\mathbb N\cup\{\infty\}$.
  The \emph{wedge} of two $n$-categories $\cA$ endowed with a cosieve object $a_{\top}$ and $\cA'$ with a sieve object $a'_{\bot}$ is the pushout
  \[
\begin{tikzcd}
{[0]} \arrow[r,"a_{\top}"]\arrow[d,"{a'_{\bot}}" swap] & \cA\arrow[d]\\
\cA' \arrow[r] & \cA\vee\cA'.
\end{tikzcd}
\]
  \end{defn}
  
As a motivating example, the wedge construction is useful to express relation between the $n$-categories belonging to Joyal's categories $\Theta_n$ (see e.g.~\cite{JoyalDisks}).

  \begin{ex}
  \label{wedgetheta}
For any $k,k'\ge0$ (or even more generally $k,k'\in\Theta_{n-1}$), the wedge of $[1|k]$ and $[1|k']$ is isomorphic to the $2$-category belonging to $\Theta_2$ (resp.~ $n$-category belonging to $\Theta_n$) denoted
\[[1|k]\vee [1|k']\cong[2|k,k'].\]
More generally, for any $m,m'\ge0$, $k_i,k'_{i'}\ge0$ (resp.~$k_i,k'_{i'}\in\Theta_{n-1}$) for $i=1,\dots,m$ and $i'=1,\dots,m'$, the wedge of $[m|k_1,\ldots k_m]$ and $[m'|k'_{1}, \ldots, k'_{m'}]$ is isomorphic to
\[[m+m'|k_1,\ldots, k_m, k_1', \ldots, k'_{m'}].\]
  \end{ex}

  A wedge of $2$-categories maps to their product, as explained by the following.
  
  \begin{rmk}
  Let $n\in\mathbb N\cup\{\infty\}$, and $\cA$ and $\cA'$ two $n$-categories as in \cref{wedge}, in particular endowed with functors $\chi\colon\cA\to[1]$ and $\chi'\colon\cA'\to[1]$.
The inclusions
\[\cA\cong \cA \times * \xrightarrow{\id \times a'_{\bot}}\cA\times\cA'\xleftarrow{a_{\top}\times \id}*\times \cA' \cong \cA'\]
induce a canonical map
\[\cA\vee\cA'\to\cA\times\cA',\]
which fits into a commutative diagram of $n$-categories
  \[
  \begin{tikzcd}[column sep=2.5cm]
\cA \vee\cA' \arrow[r]\arrow[d] & \cA\times \cA'\arrow[d,"\chi\times \chi'"]\\
{[2]} \arrow[r,"00\to 10 \to 11", swap] & {[1]\times [1].}
\end{tikzcd}
\]
In particular, get map
\[\cA \vee\cA'\to\cA\times\cA'.\]
\end{rmk}

The maps above turns out to be an inclusion as a consequence of the following theorem. In particular,
a wedge of $n$-categories can be understood as a sub-$n$-category of the product.\footnote{The case $n=2$ of the theorem could be treated more directly with techniques from \cite[\textsection 7.2]{AraMaltsiniotisVers}, the case $n=3$ could be treated more directly with techniques from \cite[\textsection 4.3]{GagnaThesis}, and the case in which $\cA$ and $\cA'$ are suspension $2$-categories is treated in the proof of  \cite[Thm 4.4]{NerveSuspension}.}

\begin{thm}
\label{wedgesubproduct}
\label{lemmaNcat}
Let $n\in\mathbb N\cup\{\infty\}$, and $\cA$ and $\cA'$ two $n$-categories as in \cref{wedge}.
There is a pullback square of $n$-categories
\[
\begin{tikzcd}[column sep=2.5cm]
\cA \vee\cA' \arrow[r]\arrow[d] & \cA\times \cA'\arrow[d,"\chi\times \chi'"]\\
{[2]} \arrow[r,"00\to 10 \to 11"] & {[1]\times [1].}
\end{tikzcd}
\]
In particular,
    \begin{enumerate}[leftmargin=*]
    \item[(a)] the objects of $\cA\vee\cA'$ are of the form $(a,a'_{\bot})$ or $(a_{\top}, a')$ for some object $a\in \cA$ or $a'\in\cA'$.
    \item[(b)] the mapping $(n-1)$-categories are as follows
\[
\Map_{\cA\vee\cA'}((a,a'),(b,b'))\cong
\left\{
\begin{array}{lll}
\Map_{\cA}(a,b)&\mbox{ if }a'=b'=a'_{\bot}\\
\Map_{\cA'}(a',b')& \mbox{ if }a=b=a_{\top}, \\
\Map_{\cA}(a, a_{\top}) \times \Map_{\cA'}(a'_{\bot}, b')&\mbox{ if }b=a_{\top}\mbox{ and }b=a'_{\bot},\\
\varnothing& \mbox{ else.}
\end{array}
\right.
\]
\item[(c)] $\cA$ and $\cA'$ are full subcategories of $\cA\vee\cA'$.
\end{enumerate}
\end{thm}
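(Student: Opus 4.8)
The plan is to identify the pushout $\cA\vee\cA'$ with the explicit pullback $\cB:=\cA\times_{[1]\times[1]}[2]$, where $[2]\to[1]\times[1]$ is the functor of the statement. Since $n\cat$ is complete, $\cB$ exists, and since a limit of $n$-categories is computed on objects and on hom-$(n-1)$-categories, one can describe $\cB$ by hand. Using $\chi^{-1}\{1\}=\{a_\top\}$ and $(\chi')^{-1}\{0\}=\{a'_\bot\}$, an object of $\cB$ is a pair $(a,a')$ with $(\chi(a),\chi'(a'))$ equal to the image of one of $0,1,2\in[2]$; this forces $a'=a'_\bot$ over $0$, forces $(a,a')=(a_\top,a'_\bot)$ over $1$, and forces $a=a_\top$ over $2$, so the object set of $\cB$ is exactly the one in (a). Running the same trichotomy on hom-$(n-1)$-categories, the pullback computing $\Map_\cB$ is the product $\Map_\cA(a,b)\times\Map_{\cA'}(a',b')$ whenever $[2]$ admits a morphism between the relevant positions, and the (co)sieve hypotheses are precisely what make $\Map_\cA(a_\top,-)$ and $\Map_{\cA'}(-,a'_\bot)$ trivial or empty exactly where $[2]\to[1]\times[1]$ is not surjective on the corresponding morphism set; this yields the list in (b). Statement (c) is then immediate, since the evident inclusions $\cA\hookrightarrow\cB$ and $\cA'\hookrightarrow\cB$ are injective on objects and restrict to identities on hom-$(n-1)$-categories.

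First I would build the comparison map. Combining $\chi\colon\cA\to[1]$ with the injection $[1]\hookrightarrow[2]$ onto $\{0,1\}$ and with $\langle\id_\cA,\,a'_\bot\circ{!}\rangle\colon\cA\to\cA\times\cA'$ yields a cone over the pullback diagram, hence a functor $\iota\colon\cA\to\cB$; symmetrically $\chi'\colon\cA'\to[1]$, the injection $[1]\hookrightarrow[2]$ onto $\{1,2\}$, and $\langle a_\top\circ{!},\,\id_{\cA'}\rangle$ give $\iota'\colon\cA'\to\cB$. One checks directly that $\iota a_\top=\iota' a'_\bot$, both being the object of $\cB$ lying over $1\in[2]$, so the universal property of the pushout produces a comparison functor $\Phi\colon\cA\vee\cA'\to\cB$.

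To prove $\Phi$ is an isomorphism I would show that $\cB$, equipped with $\iota$ and $\iota'$, has the universal property of the pushout of $\cA\xleftarrow{a_\top}[0]\xrightarrow{a'_\bot}\cA'$. Given $F\colon\cA\to\cE$ and $F'\colon\cA'\to\cE$ with $Fa_\top=F'a'_\bot$, the explicit description of $\cB$ shows that every object of $\cB$ lies in the image of exactly one of $\iota,\iota'$ (the images overlapping only at the glued object), and that every $k$-morphism of $\cB$ is canonically of one of three forms: $\iota(\phi)$ for a $k$-morphism $\phi$ of $\cA$, $\iota'(\psi)$ for a $k$-morphism $\psi$ of $\cA'$, or --- inside a ``crossing'' hom-$(n-1)$-category $\Map_\cA(a,a_\top)\times\Map_{\cA'}(a'_\bot,b')$ --- the whiskered composite $\iota'(\psi)\ast\iota(\phi)$ along the glued object. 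This forces any extension $G$ to satisfy $G\iota(\phi)=F(\phi)$, $G\iota'(\psi)=F'(\psi)$, and $G(\phi,\psi)=F'(\psi)\ast F(\phi)$ on crossing morphisms, giving uniqueness; and one checks that this prescription does define an $n$-functor, using that the interchange and associativity laws in $\cE$ make the formula compatible with the only compositions of $\cB$ that mix the two halves, namely whiskerings against morphisms out of $a_\top$ in $\cA$ and into $a'_\bot$ in $\cA'$. Hence $\Phi$ is an isomorphism, the square of the statement is the pullback defining $\cB$, and (a), (b), (c) follow.

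I expect the main obstacle to be the bookkeeping of that last step: checking that composition in $\cB$ never produces a morphism outside the three canonical forms, equivalently that the piecewise $G$ respects composition in every dimension. This is exactly where the (co)sieve hypotheses do the work, as they guarantee that no nonidentity morphism of $\cB$ runs from the $\cA'$-part back into the $\cA$-part, so the two halves interact only through whiskering at the single glued object. For $n=\infty$ the same argument goes through verbatim, with ``$k$-morphism'' ranging over all $k\ge0$ and ``whiskered composite'' read as the appropriate globular composition along the glued object.
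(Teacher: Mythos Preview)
Your proposal is correct and follows essentially the same approach as the paper: define the pullback explicitly, read off its objects and hom-$(n-1)$-categories from the (co)sieve hypotheses, build the two inclusions $\iota,\iota'$, and then verify that the pullback satisfies the universal property of the pushout by writing down the forced extension on the three types of hom-categories and checking it respects composition. The paper carries out the composition check by writing one of the mixed cases as an explicit commutative diagram of $(n-1)$-categories, which is exactly the ``whiskering'' verification you flag as the main obstacle; your phrasing ``whiskerings against morphisms out of $a_\top$ in $\cA$ and into $a'_\bot$ in $\cA'$'' is slightly garbled (those hom-sets are trivial---you mean precomposition by $\cA$-morphisms into the crossing hom and postcomposition by $\cA'$-morphisms out of it), but the intended content is right.
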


 \begin{proof}
Let $\cQ$ be the pullback of the map $[2]\to[1]\times[1]$ along $\chi\times \chi'$
\[
\begin{tikzcd}
\cQ\arrow[r]\arrow[d] & \cA\times \cA'\arrow[d]\\
{[2]} \arrow[r] & {[1]\times [1].}
\end{tikzcd}
\]
By inspection we see that
\begin{enumerate}[leftmargin=*]
    \item[(a)] the objects of $\cQ$ are of the form $(a,a'_{\bot})$ or $(a_{\top}, a')$ for some object $a\in \cA$ or $b\in\cA'$.
    \item[(b)] the mapping $(n-1)$-categories are as follows are given by
\[
\Map_{\cQ}((a,a'),(b,b'))\cong
\left\{
\begin{array}{lll}
\Map_{\cA}(a,b)&\mbox{ if }a'=b'=a'_{\bot}\\
\Map_{\cA'}(a',b')& \mbox{ if }a=b=a_{\top}, \\
\Map_{\cA}(a, a_{\top}) \times \Map_{\cA'}(a'_{\bot}, b')&\mbox{ if }b=a_{\top}\mbox{ and }b=a'_{\bot},\\
\varnothing& \mbox{ else.}
\end{array}
\right.
\]
\item[(c)] the composition $(n-1)$-functors in the first two cases is induced by the composition in $\cA$ and $\cA'$. Moreover, the composition $(n-1)$-functors involving the third case are determined by composition in $\cA$ and in $\cA'$.
\end{enumerate}
Consider the $n$-functors
\[i_{\cA}\colon \cA \to \cQ\text{ and }i_{\cA'}\colon \cA' \to \cQ\]
defined on objects by $i_{\cA}(a)=(a,a'_{\bot})$ and $i_{\cA'}(a')=(a_{\top}, a')$, and induced by the isomorphisms above on hom-$(n-1)$-categories.
We argue that the commutative diagram of $n$-categories
\[
\begin{tikzcd}
{[0]} \arrow[r, "a'_{\bot}"]\arrow[d, "a_{\top}"swap] & \cA'\arrow[d, "i_{\cA'}"]\\
\cA\arrow[r, "i_{\cA}"swap] & \cQ
\end{tikzcd}
\]
is a pushout of $n$-categories, proving the desired statement.

In order to prove that $\cQ$ satisfies the universal property of pushouts, we suppose to be given a commutative diagram of $n$-categories formed by the solid arrows
\[
\begin{tikzcd}
{[0]} \arrow[r, "a'_{\bot}"]\arrow[d, "a_{\top}" swap] & \cA'\arrow[d, "i_{\cA'}"] \arrow[ddr, bend left=20, "\alpha'"]& \\
\cA\arrow[r, "i_{\cA}"] \arrow[drr, bend right=20, "\alpha"]& \cQ \arrow[rd, "F", dashed]& \\
&&\cD.
\end{tikzcd}
\]
We show how to construct an $n$-functor $F\colon \cQ\to \cD$ so that the diagram
commutes, and we leave the verification of the uniqueness to the reader.
 \begin{enumerate}[leftmargin=*]
\setcounter{enumi}{-1}
    \item
    We define $F$ on objects by
    \[F(a,a'_{\bot})=\alpha(a)\text{ and }F(a_{\top}, a')=\alpha'(a').\]
    \item
    We define $F$ on hom-$(n-1)$-categories
     \[F\colon\Map_{\cQ}((a,a'),(b,b'))\to\Map_{\cD}(F(a,a'),F(b,b'))\]
    \begin{itemize}[leftmargin=*]
        \item if $b=b'=a'_{\bot}$ as the functor
        \[\alpha\colon\Map_{\cA}(a,b)\xrightarrow{} \Map_{\cD}(\alpha(a), \alpha(b)).\]
       \item if $a=a'=a_{\top}$ as the functor
        \[\alpha'\colon\Map_{\cA'}(a',b')\xrightarrow{} \Map_{\cD}(\alpha'(a'), \alpha'(b')).\]
               \item if $b=a_{\top}\mbox{ and }a'=a'_{\bot}$ as the functor
        \[\alpha'(-)\circ\alpha(-)\colon\Map_{\cA}(a, a_{\top}) \times \Map_{\cA'}(a'_{\bot}, b')\xrightarrow{} \Map_{\cD}(\alpha(a), \alpha'(b')).\]
               \item otherwise as the functor
        \[\varnothing \xrightarrow{!} \Map_{\cD}(F(a,b),F(a',b')).\]
    \end{itemize}
\end{enumerate}
The fact that $F$ is compatible with identities and with most instances of composition is straightforward, and we verify compatibility with composition in one of the two interesting cases (the other one is analog). 

To this end, we need to check the commutativity of the following diagram of $(n-1)$-categories:
\[
\begin{tikzcd}[column sep=-1cm]
\Map_{\cQ}((a,a'_{\bot}),(b, a'_{\bot}))\times \Map_{\cQ}((b, a'_{\bot}),(a_{\top}, a')) \arrow[rd, "\circ_{\cQ}"]\arrow[dd, "F\times F" swap] & \\ &\Map_{\cQ}((a,a'_{\bot}),(a_{\top}, a')) \arrow[dd, "F"]\\
\Map_{\cD}(F(a,a'_{\bot}),F(b, a'_{\bot})) \times \Map_{\cD}(F(b, a'_{\bot}),F(a_{\top}, a')) \arrow[rd, "\circ_{\cD}" swap] &\\ &\Map_{\cD}(F(a,a'_{\bot}),F(a_{\top}, a')).
\end{tikzcd}
\]
Inserting the definitions and identifications above, we can identify this diagram with the following one:
\[
{\small
\begin{tikzcd}[column sep=-3.5cm]
\Map_{\cA}(a,b)\times \Map_{\cA}(b, a_{\top}) \times \Map_{\cA'}(a'_{\bot}, a') \arrow[rd, "\circ_{\cA}\times \id"]\arrow[dd, "\alpha\times \alpha\times \alpha'" swap] & \\
&\Map_{\cA}(a, a_{\top}) \times \Map_{\cA'}(a'_{\bot}, a') \arrow[dd, "\alpha\times\alpha'"]\\
 \Map_{\cD}(\alpha(a), \alpha(b))\times \Map_{\cD}(\alpha(b), \alpha(a_{\top})) \times \Map_{\cD}(\alpha'(a'_{\bot}), \alpha'(a')) \arrow[dd, "\id \times \circ_{\cD}"swap] &\\
 & \Map_{\cD}(\alpha(a), \alpha(a_{\top})) \times\Map_{\cD}(\alpha'(a'_{\bot}), \alpha'(a')) \arrow[dd, "\circ_{\cD}"]\\
\Map_{\cD}(\alpha(a),\alpha(b)) \times \Map_{\cD}(\alpha(b), \alpha'(a')) \arrow[rd, "\circ_{\cD}"swap] &\\
& \Map_{\cD}(\alpha(a), \alpha'(a')).
\end{tikzcd}
}
\]
This diagram commutes since $\alpha$ is a functor and $\circ_{\cD}$ is associative.
\end{proof}

We can define an analog wedge for simplicial sets along $0$-simplices.

 \begin{defn}
  \label{wedgeSimp}
  The \emph{wedge} of two simplicial sets with marking $X$ with a specified $0$-simplex $x_{\bot}$ and $X'$ with a specified $0$-simplex $x'$ is the pushout of simplicial sets with  marking
  \[
\begin{tikzcd}
{\Delta[0]} \arrow[r, "x"]\arrow[d, "{x'}" swap] & X\arrow[d]\\
X' \arrow[r] & X\vee X'.
\end{tikzcd}
\]
  \end{defn}

We can now compare nerve of wedge with wedge of nerve as follows.

\begin{rmk}
Let $n\in\mathbb N\cup\{\infty\}$, and $\cA$ and $\cA'$ two $n$-categories as in \cref{wedgencat}.
\begin{enumerate}[leftmargin=*]
    \item There is a commutative diagram
  \[
\begin{tikzcd}
{\Delta[0]} \arrow[r,"a_{\top}"]\arrow[d,"{a'_{\bot}}" swap] & N\cA\arrow[d]\\
N\cA' \arrow[r] & N(\cA\vee\cA').
\end{tikzcd}
\]
By the universal property of pushouts we obtain a canonical map of simplicial sets
\[N\cA\vee N\cA' \to N(\cA \vee\cA'),\]
which is an inclusion. 
Under the identification from \cref{wedgesubproduct}, this map sends an $m$-simplex of $N\cA\vee N\cA'$ of the form $\sigma\colon\cO_2[m]\to\cA$ (resp.~$\sigma'\colon\cO_2[m]\to\cA$) to the $m$-simplex of $N(\cA \vee\cA')$ given by
\[(\sigma,s_0^ma'_{\bot})\colon\cO_2[m]\to\cA\vee\cA'\text{\quad(resp.}~(s_0^ma_{\top},\sigma')\colon\cO_2[m]\to\cA\vee\cA'\text{).}\]
Moreover, a pair of $n$-functors $(\sigma,\sigma')$, where $\sigma\colon\mathcal O_2[m]\to\cA$ and $\sigma'\colon\mathcal O_2[m]\to\cA'$, defines an $m$-simplex of $N(\cA \vee\cA')$ if and only if \[\chi\sigma(s)\geq \chi'\sigma'(s)\text{ for any vertex } 0\leq s \leq m\text{ of }\mathcal O_2[m].\]
\item The map of simplicial sets can be enhanced to a map of simplicial sets with marking
\[\NRS\cA\vee \NRS\cA' \to \NRS(\cA \vee\cA'),\]
which is a regular inclusion, given that $\cA$ and $\cA'$ are full sub-$n$-categories of $\cA\vee\cA'$.
\end{enumerate}
\end{rmk}

The main result of this section is that the nerve construction commutes with the wedge construction up to a suitable notion of weak equivalence.

\begin{thm}
\label{ThmA}
Let $n\in\mathbb N\cup\{\infty\}$, and $\cA$ and $\cA'$ two $n$-categories as in \cref{wedge}.
\begin{enumerate}[leftmargin=*]
    \item
    \label{ThmApart1}
    The canonical map of simplicial sets \[N\cA\vee N\cA' \to N(\cA \vee\cA')\] is an inner anodyne extension, and in particular a categorical equivalence and a weak homotopy equivalence.
    \item
    \label{ThmApart2}
    The canonical map of simplicial sets with marking \[\NRS\cA\vee \NRS\cA' \to \NRS(\cA \vee\cA')\] is a complicial inner anodyne extension, and in particular an $(\infty,n)$-weak equivalence.
\end{enumerate}
\end{thm}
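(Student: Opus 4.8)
The plan is to prove part~(\ref{ThmApart2}) and deduce part~(\ref{ThmApart1}) by forgetting the marking, exactly as was done for \cref{ThmB}. The strategy mirrors the proof of \cref{ThmB2}: realize the regular inclusion $\NRS\cA\vee\NRS\cA'\to\NRS(\cA\vee\cA')$ as a transfinite composite of pushouts of complicial inner anodyne extensions, adding the missing simplices in a carefully chosen order. By \cref{wedgesubproduct}, an $m$-simplex of $\NRS(\cA\vee\cA')$ is a pair $(\sigma,\sigma')$ with $\sigma\colon\cO_n[m]\to\cA$, $\sigma'\colon\cO_n[m]\to\cA'$ satisfying $\chi\sigma(s)\ge\chi'\sigma'(s)$ for every vertex $s$; the simplices already present in $\NRS\cA\vee\NRS\cA'$ are those for which $\sigma$ is degenerate at $a_\top$ (i.e.\ $\chi\sigma\equiv 1$ except possibly the bottom-degenerate case) or $\sigma'$ is degenerate at $a'_\bot$. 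The natural numerical invariant playing the role of ``type'' here is the integer $k=\max\{s: \chi\sigma(s)=0\}$ (with conventions at the extremes), which records how many initial vertices land in the $\cA$-part; the missing simplices are exactly those of ``intermediate type'' $0\le k\le m-1$ where both $\sigma$ and $\sigma'$ are genuinely non-degenerate on their respective blocks.

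First I would set up the outer filtration by dimension $d$: let $X_d$ be the smallest regular sub-simplicial-set-with-marking of $\NRS(\cA\vee\cA')$ containing $X_{d-1}$, all $d$-simplices, and a suitable class of ``suspect'' $(d+1)$-simplices. Then I would refine $X_{d-1}\hookrightarrow X_d$ by a descending filtration on type $k$, and finally refine each of those by an ascending filtration on a ``suspect index'' $r$ measuring how far down the $\cA$-block the relevant data is already constant. For each missing non-suspect $d$-simplex $\tau$ of type $k-1$ there is a canonically associated suspect $(d+1)$-simplex $\widetilde\tau$ obtained by repeating the $r$-th ``row'' (in the oriental/matrix picture), with $\tau=d_r(\widetilde\tau)$; filling the inner horn $\Lambda^r[d+1]$ of $\widetilde\tau$ produces both $\tau$ and $\widetilde\tau$. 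Using a boundary-analysis lemma analogous to \cref{BoundarySuspect2}—which I would state and prove by a case distinction on the relative positions of the faces with respect to the $\cA$/$\cA'$ split—one checks that all faces of $\widetilde\tau$ other than the $r$-th already lie in the previous stage, that the marked $2$-face $\{r-1,r,r+1\}$ of the horn maps to a degenerate simplex (so the horn inclusion factors through $\Lambda^r[d+1]\to\Delta^r[d+1]$, or through the marked variant $\Lambda^r[d+1]'\to\Delta^r[d+1]''$ of \cref{CompMarkAtOnce} when $\tau$ itself is marked), and assemble the stage-to-stage map as a pushout
\[
\begin{tikzcd}[column sep=1.0cm]
\coprod_{\tau}\Lambda^r[d+1]\amalg\coprod_{\tau}\Lambda^r[d+1]'\arrow[r]\arrow[d]&\coprod_{\tau}\Delta^r[d+1]\amalg\coprod_{\tau}\Delta^r[d+1]''\arrow[d]\\
W_{r-1}\arrow[r]&W_r.
\end{tikzcd}
\]
By \cref{CompMarkAtOnce} and \cref{underlyingcomplicialinner} this is a complicial inner anodyne extension, and composing all stages gives the result; part~(\ref{ThmApart1}) then follows by \cref{underlyingcomplicialinner}(2).

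The main obstacle is bookkeeping the combinatorics for general $n$: unlike \cref{ThmB2}, where the explicit matrix description of $N(\Sigma\cP)$ from \cref{simplicesasmatrices} made faces transparent, here I must work directly with $n$-functors out of orientals $\cO_n[m]$ and the pullback description of $\cA\vee\cA'$, verifying that the face operations interact with the $\chi$-value profile exactly as needed so that the ``suspect'' simplices of a given type and index form a well-defined class closed under the relevant faces. In particular I expect the delicate point to be confirming that the $(r-1)$-st and $(r+1)$-st faces of $\widetilde\tau$ are already present (respectively marked) in the low-dimensional cases $d\le 2$, and that the horn being filled is genuinely \emph{inner} (i.e.\ $0<r<d+1$), which requires checking that type-$0$ and top-type simplices, together with suspect index-$0$ simplices, are all degenerate or already in $\NRS\cA\vee\NRS\cA'$. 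Once the analogue of \cref{BoundarySuspect2} is in place, the rest of the argument is a direct transcription of the proof of \cref{ThmB2}. An alternative, possibly cleaner route for $n=2$ only would be to deduce \cref{ThmApart2} from \cref{ThmB2} via \cref{wedgetheta} and the identification $\Sigma([k]\times[l]^{\op})\cong{}_{\cO_2[k]}\backslash\cO_2[k{+}1{+}l]/_{\cO_2[l]}$ of \cref{quotientoforiental}, but the uniform statement for all $n$ seems to require the direct filtration.
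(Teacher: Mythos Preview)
Your overall architecture—prove (\ref{ThmApart2}) by an exhaustive inner-horn-filling filtration and deduce (\ref{ThmApart1}) by forgetting the marking via \cref{underlyingcomplicialinner}—is exactly what the paper does. But the specific combinatorics you sketch does not work as written, and the paper's argument differs from yours in one essential point.

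The gap is in your construction of the suspect $(d+1)$-simplex $\widetilde\tau$. You propose to obtain it ``by repeating the $r$-th row'', i.e.\ applying the same degeneracy to both components of $(\sigma,\sigma')$; but $s_r(\sigma,\sigma')$ is already a degenerate simplex of $N(\cA\vee\cA')$ and contributes nothing. The paper instead applies \emph{different} degeneracies to the two factors: the suspect simplex is
\[(\widetilde\sigma,\widetilde\sigma')=(s_r\sigma,\,s_{r-1}\sigma').\]
This asymmetric construction is the heart of the proof: any face containing the vertices $\{r-1,r,r+1\}$ is then degenerate in \emph{both} components (via $s_r$ in the first, $s_{r-1}$ in the second), so the horn map $\Lambda^r[d+1]\to W$ respects the marking; and one still has $d_r(\widetilde\sigma,\widetilde\sigma')=(\sigma,\sigma')$.

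This forces a richer filtration than your single integer $k=\max\{s:\chi\sigma(s)=0\}$. Each simplex carries a \emph{pair} of types $(k_\rho,k_{\rho'})$, one per component, and the step $X_{d-1}\hookrightarrow X_d$ must be refined by \emph{two} intermediate layers before the suspect-index layer: first descend on the difference $b=k_{\rho'}-k_\rho$, then on $k_{\rho'}$, and only then on the suspect index $r$ (which here runs \emph{descending}, not ascending as in \cref{ThmB2}). The suspect index itself is not about constant rows but is the maximal $r$ in the overlap interval $k_\rho+1\le r\le k_{\rho'}$ for which $\rho'=s_{r-1}\cdots s_{k_\rho}\alpha'$; this interval is exactly the set of vertices landing at the glue point $a_\top=a'_\bot$, and the extra filtration layers are needed because passing from $(\sigma,\sigma')$ to $(s_r\sigma,s_{r-1}\sigma')$ increments $k_{\rho'}$ and hence the type difference. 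The boundary lemma you anticipate is stated in the paper as \cref{BoundarySuspect3} and must track both entries of the pair.

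Finally, your proposed $n=2$ shortcut via \cref{quotientoforiental} and \cref{wedgetheta} does not go through: those results identify certain wedges with $\Theta_2$-cells and certain suspensions with quotients of orientals, but neither reduces the wedge comparison map to the suspension comparison map of \cref{ThmB2}.
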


The theorem will be proven in the next section.

Recall from \cref{wedgetheta} that $2$-categories of the form $[m|k_1,\dots,k_m]$ are the objects of $\Theta_2$ (more generally, that $n$-categories of the form $[m|k_1,\dots,k_m]$ are the objects of $\Theta_n$ for $k_1,\dots,k_m\in\Theta_{n-1}$).
As an application of \cref{ThmBIntro}, we obtain the following corollary, which was anticipated as \cref{CorAIntro}.

\begin{cor}
\label{CorA}
Let $m\in\mathbb N$ and $k_1,\dots,k_m\in\mathbb{N}$ (or $k_1,\dots,k_m\in\Theta_{n-1}$).
\begin{enumerate}[leftmargin=*]
    \item The canonical map of simplicial sets \[N[1|k_1]\vee\dots\vee N[1|k_m]\hookrightarrow N[m|k_1,\dots,k_m]\] is
    an inner anodyne extension, and in particular a categorical equivalence and a weak homotopy equivalence.
    \item The canonical map of simplicial sets with marking \[\NRS[1|k_1]\vee\dots\vee\NRS[1|k_m]\hookrightarrow \NRS[m|k_1,\dots,k_m]\] is complicial inner anodyne extension, and in particular an $(\infty,n)$-weak equivalence.
\end{enumerate}
\end{cor}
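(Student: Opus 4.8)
The plan is to establish Part (2) by induction on $m$ and then read off Part (1). For the latter step, note that the forgetful functor $m\sset\to\sset$ preserves colimits, so the underlying simplicial map of the map in Part (2) is precisely the map in Part (1); since the underlying simplicial map of a complicial inner anodyne extension is an inner anodyne extension by \cref{underlyingcomplicialinner}, Part (2) yields Part (1), the remaining assertions of Part (1) being standard properties of inner anodyne extensions.

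For Part (2), the base case $m=1$ is trivial, the map being an identity. For the inductive step, I would invoke \cref{wedgetheta} to obtain the isomorphism of $n$-categories
\[[m|k_1,\dots,k_m]\cong[m-1|k_1,\dots,k_{m-1}]\vee[1|k_m],\]
where the wedge is taken along the unique cosieve object of $[m-1|k_1,\dots,k_{m-1}]$ and the unique sieve object of $[1|k_m]$. That these objects exist requires a small auxiliary induction: $[1|k]=\Sigma[k]$ has both, and the description of mapping $(n-1)$-categories in \cref{wedgesubproduct} shows that $\cA\vee\cA'$ inherits a cosieve object from $\cA'$ and a sieve object from $\cA$. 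Then \cref{ThmA}, applied to $\cA=[m-1|k_1,\dots,k_{m-1}]$ and $\cA'=[1|k_m]$, gives that the canonical map
\[\NRS[m-1|k_1,\dots,k_{m-1}]\vee\NRS[1|k_m]\longrightarrow\NRS[m|k_1,\dots,k_m]\]
is a complicial inner anodyne extension.

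It then remains to handle the first leg of the factorization. Writing $A:=\NRS[1|k_1]\vee\dots\vee\NRS[1|k_{m-1}]$, $B:=\NRS[m-1|k_1,\dots,k_{m-1}]$ and $Z:=\NRS[1|k_m]$, the canonical map $f\colon A\to B$ is a complicial inner anodyne extension by the inductive hypothesis. Since $X\vee Z$ is by definition the pushout of $Z\leftarrow\Delta[0]\to X$, the pasting law for pushouts identifies $B\vee Z$ with $B\amalg_A(A\vee Z)$, so the induced map $A\vee Z\to B\vee Z$ is a cobase change of $f$ along $A\to A\vee Z$ and hence, by the closure of complicial inner anodyne extensions under pushout (\cref{underlyingcomplicialinner}), is itself a complicial inner anodyne extension. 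Composing,
\[\NRS[1|k_1]\vee\dots\vee\NRS[1|k_m]=A\vee Z\longrightarrow B\vee Z\longrightarrow\NRS[m|k_1,\dots,k_m]\]
is a complicial inner anodyne extension, and one checks from the universal properties that this composite is the canonical comparison map; the $(\infty,n)$-weak equivalence statement then follows from \cref{underlyingcomplicialinner}. This completes the induction.

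The argument is essentially formal given \cref{ThmA} and \cref{wedgetheta}; I expect the only fiddly points to be the bookkeeping ensuring that the iterated wedge $\NRS[1|k_1]\vee\dots\vee\NRS[1|k_m]$ is well defined (so that the sieve and cosieve objects persist and \cref{ThmA} can be applied at each stage) and the routine verification that the factored composite is the canonical comparison map. Neither constitutes a substantial obstacle.
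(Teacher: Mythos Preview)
Your argument is correct and follows essentially the same route as the paper: induction on $m$, applying \cref{ThmA} with $\cA=[m-1|k_1,\dots,k_{m-1}]$ and $\cA'=[1|k_m]$ after noting the existence of the requisite (co)sieve objects. The paper's proof is terser and treats the two parts by parallel inductions using the two parts of \cref{ThmA}, whereas you prove Part~(2) and deduce Part~(1) via \cref{underlyingcomplicialinner}; your explicit pushout-pasting argument for the map $A\vee Z\to B\vee Z$ simply spells out what the paper leaves implicit.
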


  \begin{proof}
  We observe that the object $0$ of any $(m+1)$-point suspension as defined in \cite[\textsection 4]{NerveSuspension} is a sieve object, and the object $m$ of any $(m+1)$-point suspension is a cosieve object.
  Each of the two claims is proven using the corresponding statement of \cref{ThmA} by induction on $m$, specializing to $\cA=[m|k_1,\ldots , k_m]$ and $\cA'=[1|k_{m+1}]$.
  \end{proof}

  \section{Nerve vs wedge - The proofs}
  \label{sec:WedgeProofs}

The aim of this section is to prove \cref{ThmA}. We will show (\ref{ThmApart2}), and obtain (\ref{ThmApart1}) as a corollary.

 \begin{rmk}
 Let $n\in\mathbb N\cup\{\infty\}$, and $\cA$ and $\cA'$ two $n$-categories as in \cref{wedge}, in particular endowed with functors $\chi\colon\cA\to[1]$ and $\chi'\colon\cA'\to[1]$.
 \begin{enumerate}[leftmargin=*]
     \item As a consequence of \cref{wedgesubproduct},
 there is a canonical inclusion of simplicial sets
 \[N(\cA \vee\cA')\hookrightarrow N(\cA\times\cA')\cong N\cA\times N\cA'.\]
 Moreover, a pair of $n$-functors $(\rho,\rho')$, where $\rho\colon\mathcal O_2[m]\to\cA$ and $\rho'\colon\mathcal O_2[m]\to\cA'$, defines an $m$-simplex of $N(\cA \vee\cA')$ if and only if \[\chi\rho(s)\geq \chi'\rho'(s)\text{ for any vertex } 0\leq s \leq m\text{ of }\mathcal O_2[m].\]
     \item Furthermore,
a simplex $(\rho, \rho')$ of $N(\cA\vee\cA')$ is marked in $\NRS(\cA\vee\cA')$
if and only if both components $\rho$ and $\rho'$ are marked in $N\cA$ and $N\cA'$.
This means that we obtain a regular inclusion of simplicial sets with  marking
\[\NRS(\cA \vee\cA')\hookrightarrow \NRS\cA\times \NRS\cA'.\]
 \end{enumerate}
 \end{rmk}

We will make use of the following features of simplices of $N(\cA \vee\cA')$.

\begin{defn}
Let $n\in\mathbb N\cup\{\infty\}$ and $\cA$ and $\cA'$ two $n$-categories as in \cref{wedge}.
Let $(\rho,\rho')$ be an $m$-simplex of $N(\cA \vee\cA')$.
The \emph{type} of $(\rho,\rho')$
is the pair of integers $(k_{\rho}, k_{\rho'})$ defined by
\[
k_{\rho^{(\prime)}}=
\left
\{\begin{array}{ll}
-1& \mbox{ if }\chi^{(\prime)}\rho^{(\prime)}= 1,\\
\max\{0\leq s\leq m\;|\;\chi^{(\prime)}\rho^{(\prime)}(s)=0\}& \mbox{ else.}
\end{array}
\right.
\]
In particular, since $\chi\rho(s)\geq \chi'\rho'(s)$ for any vertex $0\leq s \leq m$, we have that $k_{\rho'}\geq k_{\rho}$.
\end{defn}

\begin{rmk}
The definition can be rephrased by saying that any $n$-functor $\rho\colon\cO_2[m]\to \cA\vee\cA'$ sends
\begin{itemize}[leftmargin=*]
    \item the first $k_{\rho}+1$ vertices of $\cO_2[m]$ to $\cA\setminus\{a_{\top}\}$,
    \item the next $k_{\rho'}-k_{\rho}$ vertices of $\cO_2[m]$ to $a_{\top}=a'_{\bot}\in\cA\vee\cA'$,
    \item and the final $m-k_{\rho'}$ vertices of $\cO_2[m]$ to $\cA'\setminus\{a'_{\bot}\}$.
\end{itemize} 
\end{rmk}

We will also make use of another useful feature of simplices of $N(\cA \vee\cA')$: the ``suspect index'', and of a class of simplices of $N(\cA \vee\cA')$: 
the ``suspect simplices''.
We chose the same terminology as in \cref{sec:SuspensionProofs} because these notions play similar roles as those in the argument from \cref{ThmB2}.

\begin{defn} \label{FirstFiltration}
Let $n\in\mathbb N\cup\{\infty\}$ and $\cA$ and $\cA'$ two $n$-categories as in \cref{wedge}.
Let $(\rho,\rho')$ be a $(d+1)$-simplex  of $N(\cA \vee\cA')$.
\begin{itemize}[leftmargin=*]
    \item The \emph{suspect index} of $(\rho,\rho')$ is the maximal $r$ with $k_{\rho}+1\leq r\leq k_{\rho'}$
for which there exists a simplex $\alpha'$  of $N\cA'$ such that
\[\rho'=s_{r-1}\ldots s_{k_{\rho}}\alpha',\]
and $k_{\rho}$ if such $\alpha'$ does not exist.
    \item The simplex $(\rho,\rho')$ is called \emph{suspect} if it is degenerate or in $N\cA \vee N\cA'$ or if it is of suspect index $k_{\rho}+1\leq r\leq k_{\rho'}$ and $k_{\rho'}\geq k_{\rho}+1$ and in addition 
    \[\rho=s_r\alpha\]
    for some simplex $\alpha$ of $N\cA$. 
\end{itemize}
\end{defn}

We record for future reference the faces of a suspect simplex, as well as their types.

\begin{lem}\label{BoundarySuspect3}
Let $n\in\mathbb N\cup\{\infty\}$ and $\cA$ and $\cA'$ two $n$-categories as in \cref{wedge}.
Let $(\rho,\rho')=(s_r\alpha, s_{r-1}\ldots s_{k_{\rho}}\alpha')$ be a suspect $(d+1)$-simplex of $N(\cA \vee\cA')$ of suspect index $k_{\rho}+1\leq r\leq k_{\rho'}$ which is not in $N\cA \vee N\cA'$.
The $a$-th face of $(\rho,\rho')$
\[d_a(\rho,\rho')\text{ is }\left\{
\begin{array}{ll}
  \text{a suspect simplex}   & \text{if $0\leq a \leq r-1$} \\
    \text{of type $(k_{\rho},k_{\rho'}-1)$  and suspect index $r-1$}   & \text{if } a=r\\
    \text{of type $(k_{\rho},k_{\rho'}-1)$ and suspect index $r$} & \text{if }r+1=a\leq k_{\rho'}\\
\text{of type $(k_{\rho},k_{\rho'})$} & \text{if }r+1=a=k_{\rho'}+1\\
      \text{a suspect simplex }   & \text{if $r+2\leq a \leq d+1.$}\\
\end{array}\right.\]
\end{lem}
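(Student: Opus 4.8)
The plan is to compute each face $d_a(\rho,\rho')$ one component at a time, exploiting the canonical inclusion of simplicial sets $N(\cA\vee\cA')\hookrightarrow N\cA\times N\cA'$ coming from \cref{wedgesubproduct}, under which $d_a(\rho,\rho')=(d_a\rho,d_a\rho')$. Starting from the presentations $\rho=s_r\alpha$ with $\alpha$ a simplex of $N\cA$ and $\rho'=s_{r-1}\cdots s_{k_\rho}\alpha'$ with $\alpha'$ a simplex of $N\cA'$, the first step is to push the single face operator $d_a$ through these two degeneracy strings using the simplicial identities $d_j s_j=d_{j+1}s_j=\id$, $d_i s_j=s_{j-1}d_i$ for $i<j$, and $d_i s_j=s_j d_{i-1}$ for $i>j+1$. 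This immediately produces the four-way split $0\le a\le r-1$, $a=r$, $a=r+1$, $r+2\le a\le d+1$ of the statement, the case $a=r+1$ splitting further according to whether $r+1\le k_{\rho'}$ or $r+1=k_{\rho'}+1$; inside the outer two ranges there is in addition a purely index-bookkeeping split according to the position of $a$ relative to $k_\rho$. The outcome of the step is that each component of $d_a(\rho,\rho')$ is again an explicit degeneracy string applied to a face of $\alpha$, respectively $\alpha'$, with the string beginning at the appropriately updated value of $k_\rho$ --- except in the cases $a=r$ and $a=r+1$, where the cancellations $d_r s_r=\id=d_{r+1}s_r$ (for the $\rho$-component) and $d_r s_{r-1}=\id$ (for the $\rho'$-component, only when $a=r$) shorten a string by one.

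The second step is to read off the type $(k_\rho,k_{\rho'})$ of each face. Recalling (\cref{wedgesubproduct}) that a simplex of $N(\cA\vee\cA')$ of type $(k_\rho,k_{\rho'})$ sends its first $k_\rho+1$ vertices into $\cA\setminus\{a_\top\}$, its next $k_{\rho'}-k_\rho$ vertices to the wedge point, and its remaining vertices into $\cA'\setminus\{a'_\bot\}$, one sees that in $d_a(\rho,\rho')$ the integer $k_\rho$ (respectively $k_{\rho'}$) drops by one exactly when the deleted vertex $a$ is among the first $k_\rho+1$ (respectively $k_{\rho'}+1$) vertices of $(\rho,\rho')$, and is unchanged otherwise. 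Since $k_\rho+1\le r\le k_{\rho'}$, in each of the cases $a\ge r$ the deleted vertex is not among the first $k_\rho+1$, so $k_\rho$ is preserved, while $k_{\rho'}$ drops precisely when $a\le k_{\rho'}$. This gives type $(k_\rho,k_{\rho'}-1)$ for $a=r$ and for $r+1=a\le k_{\rho'}$, and type $(k_\rho,k_{\rho'})$ for $r+1=a=k_{\rho'}+1$, as claimed; in the two outer ranges the type is not needed.

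The third step is to determine the suspect index of each face and, in the two outer ranges, to confirm that the face is suspect in the sense of \cref{FirstFiltration}. In each case the $\rho'$-component, once rewritten as a degeneracy string starting at the updated $k_\rho$ and applied to a simplex of $N\cA'$, has suspect index equal to the length of that string; the string cannot be lengthened, since the sieve object $a'_\bot$ is distinct from every vertex of $\alpha'$ lying in $\cA'\setminus\{a'_\bot\}$. One then checks that the $\rho$-component is a degeneracy at exactly that index --- in the outer ranges because $\rho$ is already degenerate at every index strictly between $k_\rho$ and $d$, all its vertices of index greater than $k_\rho$ being the cosieve object $a_\top$ whose only endomorphism is the identity, and in the cases $a=r,r+1$ directly from the shortened strings of the first step. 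Putting the three steps together yields the five possibilities of \cref{BoundarySuspect3}.

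The step I expect to be the main obstacle is none of the three individually but their combined bookkeeping: keeping the two degeneracy strings, the index shifts dictated by the simplicial identities, and the two integers $k_\rho$ and $k_{\rho'}$ mutually consistent through every sub-case, and in particular pinning down the \emph{exact} suspect indices $r-1$ and $r$ of $d_r(\rho,\rho')$ and $d_{r+1}(\rho,\rho')$ rather than merely bounds on them. The terminology of ``type'' and ``suspect index'' has been chosen to mirror that of \cref{sec:SuspensionProofs} precisely so that this bookkeeping parallels --- though it is not a formal consequence of --- the argument behind \cref{BoundarySuspect2}.
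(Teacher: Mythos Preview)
Your approach is exactly the paper's: compute the two components of $d_a(\rho,\rho')$ separately via the simplicial identities, then read off type and suspect index from the resulting degeneracy presentation. The paper's own proof in fact records only the explicit formulas and the types and leaves the suspect-index and suspectness verifications to the reader, so your outline is if anything more complete.

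There is, however, one point in your third step that does not quite work as stated. You argue that the degeneracy string in the $\rho'$-component ``cannot be lengthened, since the sieve object $a'_\bot$ is distinct from every vertex of $\alpha'$ lying in $\cA'\setminus\{a'_\bot\}$.'' But the vertex $\alpha'(k_\rho+1)$ need not lie in $\cA'\setminus\{a'_\bot\}$: indeed, when $r<k_{\rho'}$ one has $\rho'(r+1)=a'_\bot$ and hence $\alpha'(k_\rho+1)=a'_\bot$ as well, so the vertex argument gives no obstruction. The correct reason the string cannot be lengthened is the \emph{maximality} of $r$ as the suspect index of the original $(\rho,\rho')$: if one could write, say, $d_r\rho'=s_{r-2}\cdots s_{k_\rho}\alpha'$ also as $s_{r-1}\cdots s_{k_\rho}\beta'$, then by the Eilenberg--Zilber lemma $\alpha'=s_{k_\rho}\beta'$, whence $\rho'=s_{r-1}\cdots s_{k_\rho}\alpha'=s_r\cdots s_{k_\rho}\beta'$, contradicting that the suspect index of $(\rho,\rho')$ is $r$ and not $r+1$. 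The same maximality argument handles the case $a=r+1\le k_{\rho'}$. Once you replace the vertex-distinctness reason by this maximality reason, the proof goes through exactly as you outline.
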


\begin{proof}
From the simplicial identities, we obtain the formulas for the $a$-th face of $(\rho,\rho')$ is
    \[
d_a(\rho,\rho')=
    \left\{
    \begin{array}{ll}
(s_{r-1}d_a\alpha, s_{r-2}\ldots s_{k_{\rho}-1}d_a\alpha') &\mbox{if } 0\leq a \leq k_{\rho}
\\
(s_{r-1}d_a\alpha,
s_{r-2}\ldots s_{k_{\rho}}\alpha')
& \mbox{if } k_{\rho}+1\leq a <r
\\
(\alpha,
s_{r-2}\ldots s_{k_{\rho}}\alpha') 
& \mbox{if } a=r
\\
(\alpha,
s_{r-1}\ldots s_{k_{\rho}}d_{k_{\rho}+1}\alpha')
& \mbox{if } r+1=a\leq k_{\rho'}
\\
(\alpha,
s_{r-1}\ldots s_{k_{\rho}}d_{k_{\rho}+1}\alpha')
& \mbox{if } r+1=a= k_{\rho'}+1
\\
(s_rd_{a-1}\alpha, 
s_{r-1}\ldots s_{k_{\rho}}d_{a-r+k_{\rho}}\alpha')& \mbox{if } r+1<a\leq k_{\rho'}
\\
(s_rd_{a-1}\alpha, 
s_{r-1}\ldots s_{k_{\rho}}d_{a-r+k_{\rho}}\alpha')& \mbox{if } k_{\rho'}<a\leq d+1.
    \end{array}
    \right.
\]
From a careful case distinction, we obtain that the type of the $a$-th face of $(\rho,\rho')$ is 
\[
k_{d_a(\rho,\rho')}=
    \left\{
    \begin{array}{ll}
(k_{\rho}-1,k_{\rho'}-1)&\mbox{if } 0\leq a \leq k_{\rho}
\\
(k_{\rho},k_{\rho'}-1)
& \mbox{if } k_{\rho}+1\leq a <r
\\
(k_{\rho},k_{\rho'}-1)
& \mbox{if } a=r
\\
(k_{\rho},k_{\rho'}-1)
& \mbox{if } r+1=a\leq k_{\rho'}
\\
(k_{\rho},k_{\rho'})
& \mbox{if } r+1=a= k_{\rho'}+1
\\
(k_{\rho},k_{\rho'}-1)& \mbox{if } r+1<a\leq k_{\rho'}
\\
(k_{\rho},k_{\rho'})& \mbox{if } k_{\rho'}<a\leq d+1.
    \end{array}
    \right.
\]
as desired.
\end{proof}

We can now prove (\ref{ThmApart2}) of \cref{ThmAIntro}.

\begin{thm}
\label{thmA2}
Let $n\in\mathbb N\cup\{\infty\}$ and $\cA$ and $\cA'$ two $n$-categories as in \cref{wedge}. The canonical map of  simplicial sets with  marking \[\NRS\cA\vee \NRS\cA' \to \NRS(\cA \vee\cA')\] is a complicial inner anodyne extension, and in particular an $(\infty,n)$-weak equivalence.
\end{thm}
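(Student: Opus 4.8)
The plan is to run the argument of the proof of \cref{ThmB2} almost verbatim, now using the notions of \emph{type} $(k_\rho,k_{\rho'})$, \emph{suspect index}, and \emph{suspect simplex} for $N(\cA\vee\cA')$ together with the description of faces of a suspect simplex provided by \cref{BoundarySuspect3}. I would exhibit the regular inclusion $\NRS\cA\vee\NRS\cA'\to\NRS(\cA\vee\cA')$ as a transfinite composite of complicial inner anodyne extensions obtained by attaching the missing simplices in order of: (i) ascending dimension $d$; then, within fixed $d$, (ii) descending value of $k_{\rho'}$ in the type $(k_\rho,k_{\rho'})$ (the value $k_\rho$ may be ordered arbitrarily); then, within a fixed type, (iii) \emph{descending} suspect index $r$. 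Each attachment will be realized as a pushout of a coproduct of the maps $\Lambda^r[d+1]\hookrightarrow\Delta^r[d+1]$ and $\Lambda^r[d+1]'\hookrightarrow\Delta^r[d+1]''$ of \cref{CompMarkAtOnce}.

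Concretely, set $X_1:=\NRS\cA\vee\NRS\cA'$ and, for $d\ge2$, let $X_d$ be the smallest regular subcomplex of $\NRS(\cA\vee\cA')$ containing $X_{d-1}$, all $d$-simplices, and all suspect $(d+1)$-simplices; since any simplex lying in $N\cA\vee N\cA'$ is suspect and already in $X_1$, and there are no non-degenerate suspect $2$-simplices, $X_1$ contains all $1$-simplices. I then refine $X_{d-1}\hookrightarrow X_d$ by type: by \cref{BoundarySuspect3}, every face of a suspect $(d+1)$-simplex $\widetilde\tau$ of type $(k_\rho,k_{\rho'})$ is either suspect, or of type $(k_\rho,k_{\rho'})$, or of type $(k_\rho,k_{\rho'}-1)$; so, processing types with $k_{\rho'}$ descending, at the stage for $(k_\rho,k_{\rho'})$ I attach the suspect $(d+1)$-simplices of that type together with the non-suspect $d$-simplices of type $(k_\rho,k_{\rho'}-1)$. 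These two families are in bijection via $\tau=d_r\widetilde\tau$, where $\widetilde\tau$ is recovered from $\tau$ by inserting the degeneracy $s_r$ in the first component and $s_{r-1}$ in the second, exactly as the face formulas in the proof of \cref{BoundarySuspect3} dictate.

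Finally I refine each type-stage by suspect index $r$ running from $k_{\rho'}$ down to $k_\rho+1$; this descending order—in contrast with \cref{ThmB2}—is what \cref{BoundarySuspect3} forces. Indeed, for a suspect $(d+1)$-simplex $\widetilde\tau$ of suspect index $r$ all faces $d_a\widetilde\tau$ with $a\ne r$ already lie in the previously built subcomplex: those with $a<r$ or $a>r+1$ are suspect, hence in $X_{d-1}$; the face $d_{r+1}\widetilde\tau$, when $r+1\le k_{\rho'}$, is a $d$-simplex of type $(k_\rho,k_{\rho'}-1)$ and suspect index $r$, attached at the immediately preceding sub-stage for suspect index $r+1$ (or suspect, hence in $X_{d-1}$); and when $r=k_{\rho'}$, the face $d_{r+1}\widetilde\tau$ is a $d$-simplex of type $(k_\rho,k_{\rho'})$, attached at an earlier type-stage. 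So the inner $r$-horn of $\widetilde\tau$—inner because $0<r<d+1$, as one sees since a $(d+1)$-simplex with $k_\rho=-1$ lies in the copy of $\NRS\cA'$ in $X_1$ and one with $k_{\rho'}=d+1$ lies in the copy of $\NRS\cA$ in $X_1$—defines a map $\Lambda^r[d+1]\to W_{r-1}$. As in \cref{ThmB2} this is a map of simplicial sets with marking: the only possibly non-degenerate marked $2$-face $\{r-1,r,r+1\}$ is sent to a $2$-simplex both of whose components are degenerate—the first because $\rho=s_r\alpha$ is degenerate at $r$, the second because $\rho'$ is constant on $\{k_\rho,\dots,k_{\rho'}\}\supseteq\{r-1,r\}$—hence marked; and when $\tau$ is marked one upgrades this to a map $\Lambda^r[d+1]'\to W_{r-1}$, checking for $d=2$ that the $(r-1)$-st and $(r+1)$-st faces of $\widetilde\tau$ are marked, which follows because markedness of $\tau$ forces markedness of its first component $\alpha$ and, in the boundary case $r=k_\rho+1$, also of its second component. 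Filling all such horns simultaneously presents $W_{r-1}\hookrightarrow W_r$ as a pushout of a coproduct of the maps of \cref{CompMarkAtOnce}, hence as a complicial inner anodyne extension. Composing all the filtrations gives the claim, and part (\ref{ThmApart1}) then follows from \cref{underlyingcomplicialinner} exactly as \cref{ThmB1} followed from \cref{ThmB2}.

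The main obstacle is organizational rather than conceptual: matching the two-parameter type $(k_\rho,k_{\rho'})$ to a single well-ordered filtration, and—above all—getting the direction of the suspect-index filtration right so that every face of an attached suspect $(d+1)$-simplex other than the one being created already sits in the previously constructed subcomplex. In particular, unlike in \cref{ThmB2}, a face of $\widetilde\tau$ of the \emph{same} type as $\widetilde\tau$ appears (namely $d_{k_{\rho'}+1}\widetilde\tau$ when $r=k_{\rho'}$), and one must verify that it was attached at an earlier type-stage; this is the step that must be extracted carefully from the case analysis in \cref{BoundarySuspect3}.
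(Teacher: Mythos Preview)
Your approach is essentially the paper's: a dimension/type/suspect-index filtration, each step realized as a pushout of complicial inner horn inclusions via \cref{CompMarkAtOnce}. Your type filtration is even a bit cleaner than the paper's—ordering only by descending $k_{\rho'}$ (ties in $k_\rho$ arbitrary) works for exactly the reason you give, whereas the paper inserts an additional layer by the type difference $b=k_{\rho'}-k_\rho$ before refining by $k_{\rho'}$. Both orderings satisfy the one property that matters: the stage for $(p,q+1)$ precedes that for $(p,q)$, so the exceptional face $d_{q+1}\widetilde\tau$ of type $(p,q)$ is already present.

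There are, however, two genuine gaps that come from copying \cref{ThmB2} too literally.

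\emph{The dimension filtration must start at $X_0$, not $X_1$.} Unlike the suspension situation, $\NRS\cA\vee\NRS\cA'$ does \emph{not} contain all $1$-simplices of $\NRS(\cA\vee\cA')$: any $1$-simplex of type $(0,0)$, i.e.\ a morphism from an object of $\cA\setminus\{a_\top\}$ to one of $\cA'\setminus\{a'_\bot\}$, is missing. Correspondingly there \emph{are} non-degenerate suspect $2$-simplices outside the wedge (take $k_\rho=0$, $k_{\rho'}=1$, $r=1$, with $\alpha,\alpha'$ non-degenerate $1$-simplices). So you must set $X_0:=\NRS\cA\vee\NRS\cA'$ and run the filtration from $d\ge1$, attaching those $1$-simplices in the step $X_0\hookrightarrow X_1$; this is what the paper does.

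\emph{The marking checks must be made for all dimensions, not just in degree $2$.} In \cref{ThmB2} one gets away with checking only the $2$-face $\{r-1,r,r+1\}$ and the case $d=2$ for $\Lambda^r[d+1]'$ because every simplex of dimension $\ge3$ in a $2$-categorical nerve is marked. Here $n$ is arbitrary, so you must verify that \emph{every} face of $\widetilde\tau=(s_r\sigma,s_{r-1}\sigma')$ containing $\{r-1,r,r+1\}$ is marked, and that the $(r\pm1)$-st faces are marked whenever $\tau$ is marked, for \emph{all} $d$. Fortunately the same observations do the job uniformly: any face containing $\{r-1,r,r+1\}$ has first component degenerate (it contains $\{r,r+1\}$ and $\widetilde\sigma=s_r\sigma$) and second component degenerate (it contains $\{r-1,r\}$ and $\widetilde\sigma'=s_{r-1}\sigma'$), hence is marked; and $d_{r-1}\widetilde\tau=(s_{r-1}d_{r-1}\sigma,\sigma')$, $d_{r+1}\widetilde\tau=(\sigma,s_{r-1}d_r\sigma')$ each have one degenerate component and the other equal to a component of the marked simplex $\tau$. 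State these verifications in full generality and your argument goes through.
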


In order to prove the theorem, we will add all simplices of $\NRS(\Sigma\cP)$ missing from $\Sigma(\NRS\cP)$ inductively on their (ascending) dimension $d$, the (descending) difference of types $b:=k_{\rho'}-k_{\rho}$, the (descending) type of the second component $k_{\rho'}$, and their (descending) suspect index $r$.

\begin{proof}
 In order to show that the inclusion $N\cA\vee N\cA'\hookrightarrow N(\cA \vee\cA')$ is a complicial inner anodyne extension, we will realize it as a transfinite composite of intermediate complicial inner anodyne extensions \[N\cA\vee N\cA'=:X_0\hookrightarrow X_1\hookrightarrow \ldots\hookrightarrow X_{d-1}\hookrightarrow X_d\hookrightarrow\ldots\hookrightarrow N(\cA \vee\cA'). \]
For $d\geq 1$, we let $X_d$ be the smallest regular subsimplicial set of $N(\cA \vee\cA')$ 
containing $X_{d-1}$, all $d$-simplices of $N(\cA \vee\cA')$ 
as well as the suspect $(d+1)$-simplices of $N(\cA \vee\cA')$. 
Note that $X_0$ contains all $0$-simplices of $N(\cA \vee\cA')$ as well as all that all suspect $1$-simplices of $N(\cA \vee\cA')$ are in $X_0$ by definition.
We see using \cref{BoundarySuspect3} that the difference between $X_{d}$ and $X_{d-1}$ are the non-degenerate non-suspect $d$-simplices and the non-degenerate suspect $(d+1)$-simplices.

In order to show that the inclusion $X_{d-1}\hookrightarrow X_{d}$ is a complicial inner anodyne extension for all $d\geq 1$, we realize it as a composite of intermediate complicial inner anodyne extensions
\[X_{d-1}=:Y_{d} \hookrightarrow Y_{d-1} \hookrightarrow \ldots\hookrightarrow Y_{b+1}\hookrightarrow Y_b\hookrightarrow\ldots \hookrightarrow Y_{0}=X_{d}.\]
For $d-1\ge b\ge 0$, let $Y_b$ be the
smallest regular subset of $X_{d}$ containing $Y_{b+1}$ as well as all suspect $(d+1)$-simplices $(\widetilde\sigma,\widetilde\sigma')$ of $N(\cA \vee\cA')$
of type $(k_{\widetilde\sigma},k_{\widetilde\sigma'})$ for which $k_{\widetilde\sigma'}-k_{\widetilde\sigma}=b+1$. 
Note that any $(d+1)$-simplex of type difference $d+1$ and any $d$-simplex of type difference $d$ is in $X_0\subset Y_d$. 
The difference between $Y_{b}$ and $Y_{b+1}$ is given by
the non-degenerate suspect $(d+1)$-simplices $(\widetilde\sigma,\widetilde\sigma')$ with type difference $k_{\widetilde\sigma'}-k_{\widetilde\sigma}=b+1$ and their $d$-dimensional faces not already present in $Y_{b+1}$. These are exactly the non-degenerate, non-suspect $d$-simplices $(\sigma,\sigma')$ of $N(\cA \vee\cA')$ of type difference $k_{\sigma'}-k_{\sigma}=b$.
Indeed, on the one hand one can use  \cref{BoundarySuspect3} to check that all faces of $(\widetilde\sigma,\widetilde\sigma')$ that are not already present in $Y_{b+1}$ are non-degenerate non-suspect simplices of type difference $b$; on the other hand, any such $d$-simplex $(\sigma,\sigma')$
occurs as a face of the $(d+1)$-suspect simplex  $(s_{r}\sigma, s_{r-1}\sigma')$, with $r-1$ being the suspect index of $(\sigma,\sigma')$.
In particular, we have that $Y_0=X_d$.

In order to show that the inclusion $Y_{b+1}\hookrightarrow Y_b$ is a complicial inner anodyne extension for $d-1\ge b\ge 0$, we realize it as a filtration made by intermediate complicial anodyne extensions
\[Y_{b+1}=:Z_{d} \hookrightarrow Z_{d-1} \hookrightarrow \ldots\hookrightarrow Z_{k+1}\hookrightarrow Z_k\hookrightarrow\ldots \hookrightarrow Z_{b}=Y_b.\]
For $d> k\ge b$,
 we let $Z_k$ be
the smallest regular subset of $Y_b$ containing $Z_{k+1}$ as well as all $(d+1)$-simplices $(\widetilde\sigma,\widetilde\sigma')$ of $Y_{b}$ of type $(k_{\widetilde\sigma},k_{\widetilde\sigma'})=(k-b,k+1)$. 
Note that any $(d+1)$-simplex of type $(d-b, d+1)$ is already in $X_0\subset Z_{d}$.
The difference between $Z_{k}$ and $Z_{k+1}$ are
the non-degenerate suspect $(d+1)$-simplices of $N(\cA \vee\cA')$ of type $(k_{\widetilde\sigma},k_{\widetilde\sigma'})=(k-b,k+1)$ and their $d$-dimensional faces not already present in $Z_{k+1}$, which can be seen (using \cref{BoundarySuspect3}) to be exactly all non-degenerate, non-suspect $d$-simplices $(\sigma,\sigma')$ of $N(\cA \vee\cA')$ of type $(k-b, k)$.
In particular by definition we have that $Z_b=Y_b$.

In order to show that the inclusion $Z_{k+1}\hookrightarrow Z_k$ is a complicial inner anodyne extension for $d-1\ge k\ge b$, we realize it as a filtration made by intermediate complicial inner anodyne extensions.
\[Z_{k+1}=:W_{k+2}\hookrightarrow W_{k+1} \hookrightarrow \ldots \hookrightarrow W_{r+1}\hookrightarrow W_r\hookrightarrow\ldots\hookrightarrow W_{k-b+1}=Z_k.\]
For $k+1\ge r\ge k-b+1$, we let $W_r$ be the smallest regular simplicial subset of $Z_k$
containing $W_{r+1}$ as well as the $(d+1)$-suspect simplices 
of $Z_k$ of suspect index $r$. In particular by definition we have that $W_{k-b+1}=Z_k$.
This means that the difference between $W_{r+1}$ and $W_r$ are the non-degenerate suspect $(d+1)$-simplices $(\widetilde{\sigma}, \widetilde{\sigma}')$ of $N(\cA\vee\cA')$ of type $(k-b,k+1)$ and suspect index $r$, and their $d$-dimensional faces not already present in $W_{r+1}$, which can be seen (again using \cref{BoundarySuspect3}) to be exactly and the non-degenerate non-suspect $d$-simplices $(\sigma, \sigma')$ of type $(k-b, k)$ and suspect index $r-1$.

There is a bijective correspondence between the $(d+1)$- and $d$-simplices mentioned above, as follows. On the one hand, given any such $d$-simplex $(\sigma, \sigma')$ one can build the suspect $(d+1)$-simplex
\[(\widetilde{\sigma}, \widetilde{\sigma}'):=(s_{r}\sigma, s_{r-1}\sigma')\]
of $\NRS(\cA\vee\cA')$ of suspect index $r$ and type $(k-b, k+1)$; vice versa, given any such suspect $(d+1)$-simplex $(\widetilde{\sigma}, \widetilde{\sigma}')$, one obtains $(\sigma,\sigma')$ as $(\sigma,\sigma')=d_{r}(\widetilde{\sigma}, \widetilde{\sigma}')$.

Let $(\widetilde\sigma,\widetilde\sigma')$ be a $(d+1)$-suspect simplex in $W_r$ not in $W_{r+1}$, and let's record the following relevant properties.
\begin{itemize}[leftmargin=*]
    \item We argue that the $r$-horn of $(\widetilde\sigma,\widetilde\sigma')$ belongs to $W_{r+1}$; in particular, the $r$-horn defines a map of (underlying) simplicial sets 
    \[\Lambda^{r}[d+1]\to W_{r+1}.\]
    Indeed, using \cref{BoundarySuspect3} we see that the $a$-th face of $(\widetilde\sigma,\widetilde\sigma')$ is already in $W_{r+1}$ for $a\neq r$ since:
\begin{itemize}[leftmargin=*]
\renewcommand\labelitemii{$\diamondsuit$}
    \item if $0\leq a \leq r-1$, 
    the face
    $d_a(\widetilde\sigma,\widetilde\sigma')$ is a suspect $d$-simplex, and in particular it belongs to $X_{d-1}\subset W_{r+1}$.
     \item if $a=r+1 \leq k+1$, 
     the face $d_a(\widetilde\sigma,\widetilde\sigma')$ is a $d$-simplex of type $(k-b, k)$ and suspect index $r$, 
     and in particular it belongs to $W_{r+1}$.
     \item if $a=r+1=k+2$, 
     the face $d_a(\widetilde\sigma,\widetilde\sigma')$ is of type  $(k-b, k+1)$
     and in particular it belongs to $Y_{b+1}\subset W_{r+1}$.
    \item if $r+2\leq a \leq d+1$, the face $d_a(\widetilde\sigma,\widetilde\sigma')$ is a suspect $d$-simplex, and in particular it belongs to $X_{d-1}\subset W_{r+1}$.
        \end{itemize}
    \item We argue that the $r$-th horn of $(\widetilde\sigma,\widetilde\sigma')$ defines a map of simplicial sets
    \[\Lambda^{r}[d+1]\to W_{r+1}\]
    with  marking. To this end, we need to show that any face containing $\{r-1,r,r+1\}$ is mapped to a marked simplex of $W_{r-1}$. This is true because
    a (not necessarily top-dimensional) face of $(\widetilde\sigma,\widetilde\sigma')$ that contains the vertices $\{r-1,r,r+1\}$ is necessarily degenerate in both coordinates, given that $(\widetilde\sigma,\widetilde\sigma')=(s_r\sigma,s_{r-1}\sigma')$.
  \item If furthermore $(\sigma,\sigma')$ is marked, the $r$-th horn of $(\widetilde\sigma,\widetilde\sigma')$ defines a map of simplicial sets with  marking
        \[\Lambda^{r}[d+1]'\to W_{r+1},\]
        with the simplicial set with marking $\Lambda^{r}[d+1]'$ defined in \cref{CompMarkAtOnce}.
       To this end, we need to show that the top $r$-dimensional simplex, as well as the $(r-1)st$ and $(r+1)$-st faces are mapped to a marked simplex of $W_{r-1}$. The top-dimensional $r$-simplex, and by \cref{BoundarySuspect3} its $(r-1)$-st face, are mapped to suspect simplices of $W_{r-1}$, so in particular degenerate in both components and marked. By direct computation, or using the explicit formulas given in the proof of \cref{BoundarySuspect3}, one finds that the $(r+1)$-st face is degenerate in the second component and that the first component is the simplex $\sigma$,
       which is marked by assumption, and it is therefore mapped to a pair of marked simplices.
\end{itemize}
 We can thus fill all $r$-horns of suspect $(d+1)$-simplices of $W_r$ to obtain their $(r+1)$-th face, which was missing in $W_{r+1}$, as well as the suspect $(d+1)$-simplex itself.

In particular, the discussion shows that there is a pushout square
\[
\begin{tikzcd}[column sep=1.0cm]
\underset{\mathclap{\begin{subarray}{c}
  (\sigma,\sigma') \\
  \mbox{\tiny{non-marked}}
  \end{subarray}} }{\coprod} \Lambda^{r}[d+1]\amalg\underset{\mathclap{\begin{subarray}{c}
  (\sigma,\sigma') \\
  \mbox{\tiny{marked}}
  \end{subarray} }}{\coprod} \Lambda^{r}[d+1]'\arrow[d]\arrow[r] &
 \underset{\mathclap{\begin{subarray}{c}
  (\sigma,\sigma') \\
  \mbox{\tiny{non-marked}}
  \end{subarray} }}{\coprod} \Delta^{r}[d+1]\amalg \underset{\mathclap{\begin{subarray}{c}
  (\sigma,\sigma') \\
  \mbox{\tiny{marked}}
  \end{subarray}} }{\coprod} \Delta^{r}[d+1]''\arrow[d]\\
W_{r+1} \arrow[r]& W_r.
\end{tikzcd}
\]
The involved horn inclusions are in fact inner horn inclusions, 
so the inclusions of simplicial sets with marking $\Lambda^{r}[d+1]\hookrightarrow\Delta^{r}[d+1]$ and $\Lambda^{r}[d+1]'\hookrightarrow\Delta^{r}[d+1]''$ are complicial inner anodyne extensions by \cref{CompMarkAtOnce}.

It follows that $W_{{r+1}}\hookrightarrow W_r$ is an anodyne for any $k+1\geq r \geq k-b+1$, that the inclusion $Z_{k+1}\hookrightarrow Z_k$ for any $d-1\ge k\geq b$, the inclusion $Y_{b+1}\hookrightarrow Y_b$ for any $d-1\le b\le 0$, the inclusion $X_{d-1}\hookrightarrow X_d$ for any $d\ge1$, and $N\cA\vee N\cA'\hookrightarrow N(\cA \vee\cA')$ are complicial inner anodyne extensions, as desired.
\end{proof}


As an instance of \cref{underlyingcomplicialinner} (or by reading the previous proof ignoring the marking), we obtain the following corollary, which is (\ref{ThmApart1}) of \cref{ThmAIntro}.

\begin{cor}
\label{ThmA1}
Let $n\in\mathbb N\cup\{\infty\}$ and $\cA$ and $\cA'$ two $n$-categories as in \cref{wedge}.
The canonical map of simplicial sets \[N\cA\vee N\cA' \to N(\cA \vee\cA')\] is an inner anodyne extension, and in particular a categorical equivalence.
\end{cor}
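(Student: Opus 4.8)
The plan is to deduce this immediately from \cref{thmA2} by passing to underlying simplicial sets. First I would recall that the underlying simplicial set functor $U\colon m\sset\to\sset$ is both a left and a right adjoint, hence in particular cocontinuous; since moreover $U\NRS\cD$ is the (unmarked) Street nerve $N\cD$ for every $n$-category $\cD$, applying $U$ to the defining pushout square of $\NRS\cA\vee\NRS\cA'$ identifies $U(\NRS\cA\vee\NRS\cA')$ with $N\cA\vee N\cA'$ and $U(\NRS(\cA\vee\cA'))$ with $N(\cA\vee\cA')$, compatibly with the canonical comparison maps. Thus the map $N\cA\vee N\cA'\to N(\cA\vee\cA')$ of \cref{ThmA}(\ref{ThmApart1}) is precisely the underlying simplicial map of the complicial inner anodyne extension $\NRS\cA\vee\NRS\cA'\to\NRS(\cA\vee\cA')$ furnished by \cref{thmA2}.

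Next I would invoke the second item of \cref{underlyingcomplicialinner}, which says that the underlying simplicial map of a complicial inner anodyne extension is an inner anodyne extension of simplicial sets; this gives that $N\cA\vee N\cA'\to N(\cA\vee\cA')$ is inner anodyne. Finally, inner anodyne extensions are trivial cofibrations in the Joyal model structure, hence in particular categorical equivalences, which is the remaining assertion.

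An alternative route, which avoids citing \cref{thmA2} as a black box, is to rerun its proof while forgetting all marking data: the filtration $X_0\hookrightarrow X_1\hookrightarrow\cdots\hookrightarrow N(\cA\vee\cA')$, refined through the intermediate objects $Y_b$, $Z_k$ and $W_r$, exhibits $N\cA\vee N\cA'\to N(\cA\vee\cA')$ as a transfinite composite of pushouts of the inner horn inclusions $\Lambda^r[d+1]\to\Delta^r[d+1]$ with $0<r<d+1$. Indeed, on underlying simplicial sets the thinness extensions become identities and the marked horn inclusions $\Lambda^r[d+1]'\to\Delta^r[d+1]''$ become these same inner horn inclusions, so the pushout squares appearing in that proof become the desired generators; this is exactly the definition of an inner anodyne extension.

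Since \cref{thmA2} is already in hand, I do not expect any genuine obstacle here. The only point that requires a line of care is the identification of the underlying simplicial map — namely checking that $U$ sends the marked wedge to the simplicial wedge and the marked comparison map to the simplicial comparison map — and this is immediate from cocontinuity of $U$ together with $U\NRS=N$.
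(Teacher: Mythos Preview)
Your proposal is correct and matches the paper's own argument essentially verbatim: the paper deduces the corollary ``as an instance of \cref{underlyingcomplicialinner} (or by reading the previous proof ignoring the marking)'', which is precisely your primary route via $U$ and \cref{underlyingcomplicialinner}(2) together with your alternative of rerunning the filtration unmarked.
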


\bibliographystyle{amsalpha}

\bibliography{ref}
\end{document}